\documentclass[11pt]{amsart}

\usepackage[usenames,dvipsnames]{xcolor}
\usepackage{amssymb}
\usepackage{amsmath}
\usepackage{amsthm}
\usepackage{amsfonts}
\usepackage{amscd}
\usepackage{graphicx}

\usepackage[colorlinks=true,
linkcolor=red,
filecolor=webbrown,
citecolor=webgreen]{hyperref}

\definecolor{webgreen}{rgb}{0,.5,0}
\definecolor{webbrown}{rgb}{.6,0,0}

\usepackage{color}
\usepackage{fullpage}
\usepackage{float, subfig}

\usepackage{graphics, tikz}
\usepackage{latexsym}
\usepackage{epsf}
\usepackage{breakurl}

\usepackage{scalerel}

\newcommand{\srb}[1]{\psi_{n,p} 
(#1)}
\newcommand{\rb}[2][1]{\psi_{n,p}^#1 
(#2)}

\newcommand\numberthis{\addtocounter{equation}{1}\tag{\theequation}}

\usepackage{stackengine,wasysym,scalerel}

\newcommand\reallywidetilde[1]{\ThisStyle{%
  \setbox0=\hbox{$\SavedStyle#1$}%
  \stackengine{-.1\LMpt}{$\SavedStyle#1$}{%
    \stretchto{\scaleto{\SavedStyle\mkern.01mu\AC}{.5150\wd0}}{.4\ht0}%
  }{O}{c}{F}{T}{S}%
}}

\DeclareMathAlphabet\euscr{U}{eus}{m}{n}

\setlength{\textwidth}{6.5in}
\setlength{\oddsidemargin}{.1in}
\setlength{\evensidemargin}{.1in}
\setlength{\topmargin}{-.1in}
\setlength{\textheight}{8.4in}

\newcommand{\exend}{\hfill $\Diamond$}

\theoremstyle{plain}
\newtheorem{theorem}{Theorem}
\newtheorem{corollary}[theorem]{Corollary}
\newtheorem{lemma}[theorem]{Lemma}
\newtheorem{proposition}[theorem]{Proposition}

\theoremstyle{definition}
\newtheorem{definition}[theorem]{Definition}
\newtheorem{example}[theorem]{Example}

\newtheorem{question}[theorem]{Question}

\theoremstyle{remark}
\newtheorem{remark}[theorem]{Remark}

\begin{document}

\title{Mixing properties and entropy bounds of 
\\a family of Pisot random substitutions}

\subjclass[2010]{05A05, 68R15, 37B10, 37B40}
\keywords{random substitution subshifts, recognisable words, Pisot numbers, topological entropy, topological mixing}

\author{Giovanni B. Escolano}
\address{Department of Mathematics, Ateneo de Manila University 
 \newline
\hspace*{\parindent}Katipunan Avenue, 1108, Quezon City, Philippines}
\email{giovanni.escolano@obf.ateneo.edu,eprovido@ateneo.edu}

\author{Neil Ma\~nibo} 
\address{Fakult\"at f\"ur Mathematik, Universit\"at Bielefeld, \newline
\hspace*{\parindent}Postfach 100131, 33501 Bielefeld, Germany}
\email{cmanibo@math.uni-bielefeld.de }

\author{Eden Delight Miro}

\begin{abstract}
We consider a two-parameter family of random substitutions and show certain combinatorial and topological properties they satisfy. We establish that they admit recognisable words at every level. As a consequence, 
we get that the subshifts they define are not topologically mixing. 
We then show that they satisfy a weaker mixing property using a numeration system arising from a sequence of lengths of inflated words.
Moreover, we provide explicit bounds for the corresponding topological entropy in terms of the defining parameters $n$ and $p$. 
\end{abstract}

\maketitle

\section{Introduction}

Topological properties of symbolic dynamical systems  usually boil down to combinatorial features. Notions such as entropy, mixing, minimality, and ergodicity have combinatorial counterparts in terms of properties of (legal) words. In particular, for shift spaces generated by primitive substitutions on finite alphabets, some of these have been completely characterised: they have at most linear complexity, hence zero topological entropy \cite{Queffelec}, are never strongly mixing \cite{DK}, generate linearly reccurent hulls, and hence are minimal \cite{DL,Baake} and admit a unique ergodic measure  given by the word frequency measure \cite{Baake, Queffelec}. In this case, they also satisfy a dichotomy that either all the elements of $X$ are non-periodic, or all of them are periodic with respect to the shift action \cite{Baake}. 

There are of course several properties which have not been fully classified yet, like topological mixing \cite{DK}, which is intimately connected to the notion of $C$-balancedness (see \cite{Adam,BB}).
This property proved to be difficult to confirm or rule out even in the deterministic case. When the second eigenvalue  of the substitution matrix 
$\lambda_2$ satisfies $|\lambda_2|<1$, it is well known that the subshift is not topologically weakly mixing and hence is not topologically mixing, since both notions are equivalent for primitive substitutions \cite{Host}. 
The case when  $|\lambda_2|>1$ for binary alphabets was dealt with by 
Kenyon, Sadun and Solomyak, which were shown to be topologically mixing \cite{KSS}.

There are different ways to generalise classical substitutions, both in the symbolic and geometric settings, e.g., $S$-adic systems \cite{BD} and fusion rules \cite{FS}, where similar tools are available to access the properties mentioned above. In this work, we exclusively look at the generalisation where one sends a letter to a finite set of words, instead of just a single word. 
We call this a \emph{random substitution}, alluding to the idea that at each inflation step one chooses independently where to map each letter. 
An example is the random Fibonacci substitution $\psi: a\mapsto \left\{ab,ba\right\}, b\mapsto a$, introduced in \cite{GL}, where for each occurrence of $a$ one decides independently which between $ab$ and $ba$ to choose. Much less is known for spaces generated by these objects, although it has recently seen a lot of progress \cite{RustSpin,Gohlke,GS,Miro, BSS}. Under some non-degeneracy assumptions, they exhibit positive topological entropy and have infinitely many minimal components. Periodic and non-periodic points can coexist in this case, which happens for example for the random period doubling; see \cite[Ex.~40]{RustSpin}. 
They also typically admit infinitely many ergodic measures, some of which are frequency measures. 
Here, frequency measures are measures on the subshift whose values on cylinder sets defined by finite words are the almost sure limits of these word frequencies on the subshift. 
These measures arise from probability distributions one endows on the possible images of each letter, the ergodicity of which is proved in \cite{GS}. Measure-theoretic properties of the substitutions are not discussed in this contribution, whence we do not equip images of letters with probability distributions. For these aspects, we refer the reader to \cite{GS,GMRS}.

We focus here on a family $\left\{\psi_{n,p}\right\}$ of random substitutions first mentioned in \cite{Manibo} which we call the \emph{random noble Pisa substitutions}. This is a two-parameter generalisation of both the random noble means family \cite{BaakeMoll} and the Pisa family \cite{BaakeRauzy}, and for fixed $n\geqslant 2$ and $p\geqslant 1$
is given by 
\begin{equation}\label{eq: RNP definition}
\srb{a^{  }_i}=
\begin{cases}
\{a_1^{p-j}a^{  }_{i+1}a_1^j\mid 0\leqslant j\leqslant p\}, & \text{for }i\neq n,\\
\{a^{  }_1\}, &\text{otherwise}.
\end{cases}
\end{equation}

Below, we provide some reasons why this infinite family of random substitutions is interesting and why it is a natural choice for studying the robustness of other dynamical, spectral and topological properties apart from those we have considered in this work. 

\subsubsection*{\textbf{$\beta$-substitutions and simple Parry numbers}}
It is well known that any real number $\beta>1$ gives rise to a numeration system on $\mathbb{R}$. When $\beta$ is Pisot one can an associate a  substitution to $\beta$, which encapsulates the dynamics of the corresponding transformation on the interval $[0,1]$ \cite{DT,BS}. If further
$\beta$ is a simple Parry number, i.e., $1$ has a finite $\beta$ expansion $d_{\beta}(1)$, the  substitution is primitive, unimodular and irreducible. In particular, when $d_{\beta}(1)=[p\ldots p1]$ (where $p$ occurs $(n-1)$ times) the associated substitution is the deterministic noble Pisa substitution $\xi^{ }_{n,p}$ in Eq.~\eqref{eq: determ noble Pisa}. The family of random noble Pisa substitutions can be seen as the random analogues of these deterministic versions tied to these specific algebraic numbers. Moreover,  every deterministic substitution which shares the same substitution matrix with $\xi^{ }_{n,p}$ appears as a level-$1$ marginal of $\psi_{n,p}$.

\subsubsection*{\textbf{Random $p$-infinibonacci substitutions}}
For a fixed $p\geqslant 1$, one can also view the sequence of $\left\{\psi^{ }_{n,p}\right\}_{n\geqslant 0}$ as a sequence of approximants of the \emph{random $p$-infibonacci substitution}, which sends $a^{ }_i$ to the same set as above and 
$a^{ }_{\infty}$ to $\{a_1^{p-j}a^{  }_{\infty}a_1^j\mid 0\leqslant j\leqslant p\}$. These are random substitutions on the infinite (compact) alphabet $\mathbb{N}^{\ast}=\mathbb{N}\cup \left\{\infty\right\}$ and are random versions 
of the infinibonacci substitution introduced by Ferenczi in \cite{Ferenczi}; see also \cite{Cassaigne, PytheasFogg}. 
Using our bounds for the entropy, one can show that the entropies of the subshifts generated by  $\psi^{ }_{n,p}$ converge to $\log(p+1)/(p+1)$ as $n\to \infty$; see Proposition~\ref{prop: asymptotics}. It is interesting to find out whether this has an implication for the the random $p$-infinibonacci; compare \cite{Rezagholi}.
Very few general results are known for substitutions on infinite alphabets and almost nothing is known about their random versions, making this an interesting starting point for future work.

\subsubsection*{\textbf{Generalised Rauzy fractals for random substitutions}}
The  inflation multiplier $\lambda_{n,p}$ for any random noble Pisa substitution is always a Pisot unit, which makes this infinite family  amenable for the study of generalised Rauzy fractals for random substitutions. Here, the generalised fractal is no longer seen as the attractor of a standard IFS, but that of a Galton--Watson IFS \cite{Rust-talk}.

\subsubsection*{\textbf{Spectral theory}}
Non-degenerate random substitutions typically have mixed diffraction spectra. For a fixed ergodic measure, the almost sure diffraction $\widehat{\gamma}$ splits into 
$\widehat{\gamma}=\widehat{\gamma}^{ }_{|\mathbb{E}(\cdot)|^2}+\widehat{\gamma}^{ }_{V(\cdot)}$ consisting of the component associated to square of the expectation value and the variance respectively, where one deals with distribution-valued random variables on the line instead of weighted Dirac combs. 
These are studied in detail for the random noble means  substitutions $\left\{\psi^{ }_{2,p}\right\}_{p\geqslant 0}$ in \cite{BSS,Moll, Spindeler} where it was shown the first component is a pure point measure and the second is a purely absolutely continuous measure.

For general random substitutions, it is possible to apply a generalised balanced pair algorithm to $\psi^{ }_{n,p}$ to conclude that the first component is pure point. An interesting property for the family of random noble Pisa substitutions is that they all contain a marginal which is of Barge type (i.e., bijective on the first letters and constant on the final letters) and hence has pure point spectrum. This might be useful in studying the spectral type of $\widehat{\gamma}^{ }_{|\mathbb{E}(\cdot)|^2}$ \cite{GMR}.

\vspace{3mm}

 The paper is organised as follows. In Sections~\ref{sec: noble pisa} and \ref{sec: random subs}, we introduce the substitutions we are looking at and recall basic notions in symbolic dynamics.
We then investigate several properties satisfied by the set of level-$k$ inflation words in Section~\ref{sec: level k recog}, and conclude from there the existence of recognisable words. 
Here a word $u$ is level-$k$ recognisable with respect to a random substitution $\psi$ roughly means that $u$ has a unique preimage under $\psi^k$ and a unique decomposition into level-$k$ inflation words up to possibly a prefix and a suffix of $u$; see Definition~\ref{def: level k recog} below. In Theorem~\ref{thm: recog words}, we show that for an infinite set of parameters we indeed get such words for any level $k\in\mathbb{N}$. This, together with the Pisot property, implies that the subshifts they define are \emph{not} topologically mixing; see Theorem~\ref{thm: non-mixing}. 

In the same section, we also show that they nonetheless satisfy a weaker mixing property called semi-mixing, which can be proved  using a numeration system naturally arising from the lengths of inflation words of $\psi_{n,p}$; see Theorem~\ref{thm: semi-mixing}. Semi-mixing is much weaker than topological weaker but is still invariant under topological conjugacy, which can be potentially be used to classify systems which are both not topological mixing and cannot be distinguished by their topological entropy. An example would be tribonacci substitution and its twisted version given in \cite{BaakeRauzy}. These two substitutions have the same substitution matrix, and define hulls which are not mutually locally derivable (MLD). They are also not topologically conjugate, the proof of which uses  dimension arguments involving their associated Rauzy fractals. It remains to find a purely combinatorial invariant which can tell these two systems apart.

The last section is solely devoted to  topological entropy, where we apply the recent results of Gohlke in \cite{Gohlke} and estimates for $\lambda_{n,p}$ to yield bounds and asymptotic estimates for the topological entropy of the corresponding subshift. As an application, we give examples of non-conjugate random noble Pisa subshifts using the computed bounds.
We give an example of a random substitution whose topological entropy
is the logarithm of a transcendental number. This negatively answers Question 44 by Gohlke, Rust and Spindeler in \cite{GRS}. We provide a refinement of this question in Question~\ref{ques: transcendental} and provide families of examples for which this  question of the algebraic nature might be confirmed using the closed form of the corresponding entropy bounds.

\section{Noble Pisa substitutions}\label{sec: noble pisa}

Consider a finite set $\mathcal{A}=\{a_1,\ldots,a_n\}$ of letters,  which we call an alphabet. 
A finite sequence $u=u_1\ldots u_t$ over $\mathcal{A}$ is called a word, whose length is denoted by $|u|:=t$. 
By convention, if $t=0$ then $u$ is just the empty word $\epsilon$. 
The set $\mathcal{A}^*$ of all words over $\mathcal{A}$ is a monoid under concatenation i.e. for $u=u_1,\ldots,u_t$ and $v=v_1,\ldots,v_s$ one has $uv=u_1\ldots u_t v_1\ldots v_s$. 
The set $\mathcal{A}^+:=\mathcal{A}^*\setminus\{\epsilon\}$  consists of all non-empty finite words over $\mathcal{A}$. 
The full-shift $\mathcal{A}^{\mathbb{Z}}$ is the collection of all bi-infinite sequences over $\mathcal{A}$ and forms a compact metrisable space under the product topology.  
A word is a finite or infinite sequence over $\mathcal{A}$. 
We say that a word $u$ occurs in a word $v$, written as $u\prec v$, if there exists an index $\ell$ such that $u=v_{[\ell,\ell+(|u|-1)]}=v_\ell\ldots v_{\ell+(|u|-1)}$; in such case, $u$ is a subword of $v$.

A substitution is a map $\varrho\colon\mathcal{A}\to\mathcal{A}^{+}$ and extends to $\varrho\colon\mathcal{A}^\mathbb{Z}\to\mathcal{A}^\mathbb{Z}$ by concatenation, i.e., $\varrho(u)=\ldots \varrho(u_{-1}).\varrho(u_0)\varrho(u_1)\ldots$. 
This also allows one to define the powers $\varrho^k$ for $k\in\mathbb{N}$. Given $u\in\mathcal{A}^*$, we define the Abelianisation of $u$ to be the vector $\Phi(u)\in\mathbb{Z}^n_{\geqslant 0}$ with $\Phi(u)_i=|u|_{a_i}$, where $|u|_{a_i}$ denotes the number of occurrences of $a_i$ in $u$.
The language $\mathcal{L}(\varrho)$ of a substitution $\varrho$ consists of all finite words which occur in $\varrho^k(a)$ for some $k\in\mathbb{N}$ and $a\in\mathcal{A}$.
We call an element of $\mathcal{L}(\varrho)$ a legal word of $\varrho$ or $\varrho$-legal. 
The subshift $X_{\varrho}$ associated to $\varrho$ is the set of all elements in $\mathcal{A}^\mathbb{Z}$ all of whose subwords are legal. 
The (left) shift operator $S:\mathcal{A}^{\mathbb{Z}}\rightarrow\mathcal{A}^{\mathbb{Z}}$ is defined pointwise via $S(u)_i=u_{i+1}$. 
Note that the subshift $X_\varrho$ is a closed, $S$-invariant subspace of the full shift $\mathcal{A}^{\mathbb{Z}}$. 
Equipped with $S$, the subshift becomes a topological dynamical system $(X_{\varrho},S)$. 
For further information regarding substitutions and subshifts, we refer the reader to the monographs \cite{Baake,PytheasFogg,Lind}.

The family of \emph{noble means substitutions} consists of substitutions ${\sigma_p}:\{a,b\}^\mathbb{Z}\rightarrow\{a,b\}^\mathbb{Z}$ is the extension of a substitution rule $\sigma_p:\{a,b\}^*\rightarrow\{a,b\}^*,$ given by $\sigma_p(a)=a^pb,$ and $\sigma_p(b)=a$, where $p\in\mathbb{N} $. The family of \emph{Pisa substitutions} as defined by Baake and Grimm \cite{BaakeRauzy} is a set of substitutions of the form ${\varsigma_n}:\mathcal{A}^\mathbb{Z}\rightarrow\mathcal{A}^\mathbb{Z},$ induced by a substitution rule $\varsigma_n:\mathcal{A}^*\rightarrow\mathcal{A}^*,$ where $\varsigma_n(a_i)=a_1a_{i+1}$ if $i\ne n,$ and
$\varsigma_n(a_n)=a_1$, which are also called \emph{$n$-bonacci substitutions} in the literature; compare \cite{GMS,PytheasFogg}. Here, we consider the following generalisation which essentially combines the structures of these two families.

\begin{definition}
Given $n\geqslant 2$ and $p\in\mathbb{N}$, consider the alphabet $\mathcal{A}=\left\{a_1,\ldots,a_n\right\}$.
The \emph{noble Pisa substitution} ${\xi_{n,p}}\colon\mathcal{A}^\mathbb{Z}\rightarrow\mathcal{A}^\mathbb{Z}$ is induced by a substitution rule $\xi_{n,p}\colon\mathcal{A}\rightarrow\mathcal{A}^+$ given by
\begin{equation}\label{eq: determ noble Pisa}
\xi_{n,p}(a_i)=
\begin{cases}
a_1^pa_{i+1} &\text{for\ } i\ne n,\\
a_1 &\text{otherwise}.
\end{cases}
\end{equation}
\end{definition}

The Pisa substitution ${\varsigma_n}$ is the noble Pisa substitution ${\xi_{n,1}}$. For example, the Fibonacci substitution is ${\xi_{2,1}}$ while the tribonacci substitution is ${\xi_{3,1}}$. Similarly, a noble means substitution ${\sigma_p}$ corresponds to the noble Pisa substitution ${\xi_{2,p}}.$ 

The substitution matrix $M_\varrho$ of a substitution $\varrho$ encodes the occurrence of $a_i$ in $\varrho(a_j)$, i.e.,  $\left(M_\varrho\right)_{ij}=\Phi(\varrho(a_j))_i$. 
If there exists a natural number $k$ such that all the entries of $\left(M_\varrho\right)^k$ are positive, then $\varrho$ is said to be primitive. 
A substitution $\varrho$ is said to be irreducible if the characteristic polynomial $\chi_{M_\varrho}$ of $M_{\varrho}$ is irreducible in the ring $\mathbb{Q}[x]$. 
If  $\left|\det \left(M_\varrho\right)\right|=1,$ then $\varrho$ is said to be unimodular. 
A primitive substitution $\varrho$ is Pisot if the Perron--Frobenius eigenvalue $\lambda$ of $M_\varrho$ is a Pisot--Vijayaraghavan number, i.e., an algebraic integer greater than 1 while all of its algebraic conjugates are less than 1 in modulus.

\begin{proposition}\label{prop: Pisot}
Every noble Pisa substitution
${\xi_{n,p}}$ is primitive, irreducible, unimodular, and Pisot. 
\end{proposition}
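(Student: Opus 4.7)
The plan is to first write down the substitution matrix $M := M_{\xi_{n,p}}$ by reading column $j$ as the Abelianisation of $\xi_{n,p}(\alpha_j)$. This yields a matrix whose first row is $(p,p,\ldots,p,1)$, whose subdiagonal is identically $1$, and whose other entries vanish. This is exactly the first-companion form, so $\chi_M(x) = f(x) := x^n - p x^{n-1} - \cdots - p x - 1$. Expanding $\det M$ along the last column picks up only $M_{1,n} = 1$, whose minor is the $(n-1)\times(n-1)$ identity; hence $\det M = (-1)^{n+1}$ and $\xi_{n,p}$ is unimodular. For primitivity I would use the substitution graph: since $p \geqslant 1$, there is a self-loop at $\alpha_1$ coming from $\xi_{n,p}(\alpha_1) = \alpha_1^p \alpha_2$, and the directed edges $\alpha_j \to \alpha_{j+1}$ for $j<n$ together with $\alpha_n \to \alpha_1$ form a spanning cycle, so the graph is strongly connected and aperiodic, making $M$ primitive.

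The crux is the Pisot property. Since $f(1) = -p(n-1) < 0$ and $f(x) \to +\infty$ as $x \to +\infty$, the Perron--Frobenius eigenvalue $\lambda$ is the unique real root of $f$ greater than $1$. To show all remaining roots lie strictly inside the unit disk, I would apply Rouch\'e's theorem to the auxiliary polynomial
\[
g(x) \;:=\; (x-1) f(x) \;=\; x^{n+1} - (p+1) x^n + (p-1) x + 1
\]
on the circle $|x| = R := 1 + \delta$, comparing $A(x) = -(p+1) x^n$ with $B(x) = x^{n+1} + (p-1) x + 1$. A Taylor expansion of $(p+1) R^n - R^{n+1} - (p-1) R - 1$ about $R = 1$ gives $p(n-1)\delta + O(\delta^2) > 0$ for all sufficiently small $\delta > 0$, so $|A(x)| > |B(x)|$ on $|x|=R$. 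Rouch\'e then forces $g$ to have exactly $n$ zeros inside $|x| < R$, and the one remaining zero (of total degree $n+1$) lies outside and must be $\lambda$. Letting $\delta \to 0^+$ shows $g$ has $n$ zeros in $|x| \leqslant 1$, one of which is $x = 1$ (a zero of $g$ but not of $f$). It then remains to exclude roots of $f$ on the unit circle: rewriting $g(\mu) = 0$ for $|\mu|=1$ and taking moduli produces the identity $(p^2-1)\cos((n-1)\theta) + \cos((n+1)\theta) = p^2$ with $\theta = \arg \mu$, whose LHS attains $p^2$ only when $\mu \in \{+1,-1\}$; neither is a zero of $f$ since $f(1) = -p(n-1)$ and $f(-1) \in \{p, -2\}$ according to the parity of $n$. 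For the degenerate case $p=1$ the identity collapses to $\cos((n+1)\theta) = 1$, and a short separate computation using $1 + \mu + \cdots + \mu^n = 0$ shows no nontrivial $(n+1)$-th root of unity annihilates $f$.

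Finally, irreducibility of $f = \chi_M$ comes nearly for free from the Pisot property together with $f(0) = -1$. Suppose $f = f_1 f_2$ with $f_1, f_2 \in \mathbb{Z}[x]$ monic and $\lambda$ a root of $f_1$; then every root of $f_2$ is strictly inside the unit disk, so $|f_2(0)| < 1$ as soon as $\deg f_2 \geqslant 1$, contradicting $f_2(0) \in \mathbb{Z}\setminus\{0\}$ (forced by $f_1(0) f_2(0) = -1$). Hence $\deg f_2 = 0$, $f$ is irreducible over $\mathbb{Q}$, the conjugates of $\lambda$ are precisely the remaining roots of $f$ (all of modulus $< 1$), and the Pisot property is established. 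The principal obstacle in this plan is the analytic step in the middle paragraph: on the actual unit circle $|x|=1$ the natural Rouch\'e comparison only produces a non-strict inequality, with equality at $x=1$ and, in the $p=1$ case, at every $(n+1)$-th root of unity, so one is forced to enlarge the contour slightly and argue separately that the unit circle itself contains no roots of $f$.
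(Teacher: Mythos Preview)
Your argument is correct and essentially self-contained, but it follows a different route from the paper. The paper simply writes down $M_{n,p}$ and its characteristic polynomial, notes $(M_{n,p})^n>\mathbf{0}$ for primitivity, and then invokes Brauer's theorem \cite[Thm.~II]{Brauer} as a black box to obtain both irreducibility over $\mathbb{Q}$ and the location of all non-Perron roots strictly inside the unit disk; the Pisot property is then read off (with an additional citation to \cite{Siegel}), and unimodularity from $\chi(0)=-1$. By contrast, you prove the root location directly via Rouch\'e on $g(x)=(x-1)f(x)=x^{n+1}-(p+1)x^n+(p-1)x+1$ along circles $|x|=1+\delta$, handle the boundary case $|\mu|=1$ with the identity $(p^2-1)\cos((n-1)\theta)+\cos((n+1)\theta)=p^2$ (and a separate check for $p=1$), and then \emph{derive} irreducibility from the Pisot root configuration together with $f(0)=-1$ via the standard ``integer constant term'' argument. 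Your primitivity proof via the strongly connected graph with a self-loop at $\alpha_1$ is also a different (and arguably cleaner) justification than the paper's unadorned assertion that $M^n>\mathbf{0}$. The trade-off is clear: the paper's proof is three lines long but rests on an external reference that does all the work, whereas your proof is longer but stands on its own and makes transparent exactly why the polynomial $x^n-p(x^{n-1}+\cdots+x)-1$ has the Pisot structure. Both approaches are entirely standard for this class of companion-type polynomials.
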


\begin{proof}
For fixed $n$ and $p$, the substitution matrix of $\xi_{n,p}$ is given by  the $n\times n$ matrix 
\begin{equation}\label{eq: substitution matrix}
M_{{n,p}}
=
\begin{pmatrix}
p&p&p&\cdots&p&1\\
1&0&0&\cdots&0&0\\
0&1&0&\cdots&0&0\\
\vdots & \vdots & \vdots && \vdots&\vdots\\ 
0&0&0&\cdots&1&0\\
\end{pmatrix}. 
\end{equation}
Its characteristic polynomial $\chi_{n,p}$ is given by
$
\chi_{n,p}(x)
    =
    x^n
    -px
    \left(
    \sum\limits_{r=0}^{n-2}x^r
    \right)
    -1
$. 
Direct computation yields $\left(M_{n,p}\right)^n>\mathbf{0}$. By Brauer's result \cite[Thm.~II]{Brauer}, $\chi_{n,p}$ is an irreducible polynomial over the field $\mathbb{Q}$ and $\chi_{n,p}$ has a unique root $\lambda_{n,p}$ at the exterior of the unit circle and all other roots in the interior of the unit circle. Since $\xi_{n,p}$ is primitive, by a result from Canterini and Siegel \cite[Proposition 1.2]{Siegel}, one can conclude that ${\xi_{n,p}}$ is indeed a Pisot substitution.
Lastly, since $
   \chi_{n,p}(0)
    =-1
$, any noble Pisa substitution is unimodular. 
\end{proof}

Since $\chi_{n,p}$ is irreducible over $\mathbb{Q}$, the Perron--Frobenius eigenvalue of $M_{n,p}$ is irrational; thus, the subshift induced by $\xi_{n,p}$ is aperiodic \cite{Baake}.

\begin{remark}
The family of noble Pisa substitutions belong to the class of substitutions associated to $\beta$-numerations of real numbers, in particular, to the subclass where $\beta$ is a simple Parry number, i.e., 
where the $\beta$-expansion $d_{\beta}(1)$ of $1$ is finite; see \cite{BS,Fabre} for more details on $\beta$-substitutions. For the deterministic substitution $\xi_{n,p}$, one has $d_{\lambda_{n,p}}(1)=pp\cdots p1$, with $p$ occurring $(n-1)$ times. \exend
\end{remark}

\section{Random substitutions}\label{sec: random subs}

In this section, we define the main objects in this paper---the random noble Pisa substitutions. 
We first recall some standard definitions and notions about random substitutions; for more details, we refer the reader to the monograph \cite{Baake} and the works of Rust and Spindeler \cite{RustSpin}, and Gohlke \cite{Gohlke}. 

Let $\mathcal{P}(\mathcal{A}^+)$ the power set of $\mathcal{A}^+$. By abuse of notation, we will view a letter $a$ both as an element of $\mathcal{A}$ and of $\mathcal{P}(\mathcal{A}^{+})$ (as a singleton set).
A \emph{random substitution} $\vartheta\colon\mathcal{P}(\mathcal{A}^+)\rightarrow\mathcal{P}(\mathcal{A}^+)$ is defined inductively where a letter is no longer simply mapped to a single word
but to a set of finite words. 
Define $\vartheta(\varnothing)=\varnothing$. 
For singletons $\{w\}\in\mathcal{P}(\mathcal{A}^+),$
we have that
$
\vartheta(w)
=
\left\{
u^{(1)}\cdots u^{(|w|)}
\mid
u^{(g)}\in\vartheta(w_g)\right\}.
$
For a finite set of words $W$ one has
\begin{equation}\label{eq: image random subs}
\vartheta\left(W\right)=\bigcup\limits_{w\in W}\vartheta\left(w\right). 
\end{equation}

We call an element of $\vartheta^k(w)
$ a \emph{level-$k$ realisation} of $\vartheta$ on $w$ or a \emph{level-$k$ inflation word}. 
As in the deterministic case, $\vartheta$ extends to a map 
 $\vartheta\colon \mathcal{P}(\mathcal{A}^\mathbb{Z})\to\mathcal{P}(\mathcal{A}^\mathbb{Z})$, where for $\{y\}
\in\mathcal{P}(\mathcal{A}^\mathbb{Z})$ one has
$$\vartheta(y)
=
\left\{
x\in\mathcal{A}^\mathbb{Z}
\mid
x=\cdots x^{(-1)}.x^{(0)}x^{(1)}\cdots, \text{ where }
x^{(i)}\in\vartheta(y_i)
 \right\}.
$$
The image of a collection of bi-infinite words under $\vartheta$ can be written as in Eq.~\eqref{eq: image random subs}. We now define the random analogues of the deterministic substitutions we have in the previous section, which were first mentioned in the previous work of the second and third authors \cite{Manibo}.

A random substitution $\vartheta$ is called \emph{compatible} if for all $a\in\mathcal{A}$, all elements of $\vartheta(a)$ have the same Abelianisation, i.e., if $u,v\in \vartheta(a)$, then $\Phi(u)=\Phi(v)$. 
If $\vartheta$ is compatible, one can define the substitution matrix of $\psi$ via $\Phi$ as in the deterministic case. 
This allows one to extend the notion primitivity, irreducibility, unimodularity and the Pisot property to compatible random substitutions. A \emph{marginal} $\varrho\colon \mathcal{A}\to \mathcal{A}^{+}$ of a random substitution $\vartheta$ is a deterministic substitution which satisfies $\varrho(a)\in\vartheta(a)$ for all $a\in\mathcal{A}$.  Compatibility is equivalent to all marginals $\varrho$ of $\vartheta$ having the same substitution matrix.

One can also extend the notion of legality in the random setting. 
A word $w\in\mathcal{A}^+$ is said to be \emph{legal with respect to}  $\vartheta$ if there exist a natural number $k\in\mathbb{N}$ and a letter $a\in\mathcal{A}$ such that $w\prec u$ for some $u\in \vartheta^{k}(a)$; in such case, we write $w\in\mathcal{L}(\vartheta)$, where $\mathcal{L}(\vartheta)$ is the \emph{language of the random substitution} $\vartheta$. The set of length-$\ell$ legal words $\mathcal{L}^\ell(\vartheta)$ is the collection of $\vartheta$-legal words of length $\ell.$ 
As in the deterministic setting, we define the subshift $X_\vartheta$ of $\vartheta$ via the language, i.e., 
all bi-infinite sequences all of whose subwords are legal with respect to $\vartheta$.  

The \emph{language $\mathcal{L}(X_\vartheta)$ of the subshift} $X_\vartheta$ is the set $\{u\prec v\mid v\in X_{\vartheta}\},$ and elements $u$ of $\mathcal{L}(X_\vartheta)$ are said to be \emph{admitted} or \emph{admissible}. The set $\mathcal{L}^\ell(X_\vartheta)$ is the collection of admissible words of length $\ell$. 
The language of a random substitution is not necessarily the same as the language of the subshift. 
Nevertheless, for primitive random substitutions, legality and admissibility are equivalent.

 By construction, all random noble Pisa substitutions are compatible; hence, the deterministic and the random substitutions matrices are the same. Thus, $\psi_{n,p}$ is also primitive, irreducible, unimodular, and Pisot.
For ease of notation, we let $X_{n,p}$ be the subshift of the random noble Pisa substitution $\psi_{n,p}$.  Since $\lambda_{n,p}\notin \mathbb{N}$, $X_{n,p}$ does not contain periodic points for any pair of parameters $n,p$; see \cite[Prop.~26]{Rust}. We also have $\mathcal{L}(\psi_{n ,p})=\mathcal{L}(X_{n,p})$ for $\psi_{n,p}$; compare \cite[Lem.~9]{RustSpin}.

\section{Mixing properties}\label{sec: level k recog}
In this section, we investigate the mixing properties of the dynamical systems induced by random noble Pisa substitutions. 

\subsection{Topological mixing}
A topological dynamical system $(M,f)$ is called \emph{topologically mixing} if for every pair $(U,V)$ of  open sets there exists a natural number $N\geqslant0$ such that  $f^m(U)\cap V\neq\varnothing$ for all $m\geqslant N$.
When one considers a subshift $(X,S)$ over a finite alphabet, this topological definition is equivalent to a combinatorial condition in terms of legal words. More formally, we have the following. 


\begin{proposition}[Mixing for subshifts]\label{prop: mixing subshift}
A subshift $X$ is topologically mixing if for every ordered pair of legal words $u,v \in \mathcal{L}(X)$ there exists  a natural number $N\in\mathbb{N}$ such that if $m\geqslant N,$ there exists a legal word $w\in\mathcal{L}^m(X)$ of length $m$ such that $uwv\in\mathcal{L}(X)$, i.e., it is also a legal word. \qed
\end{proposition}

This property has been studied for several classes of deterministic substitutions \cite{DK,KSS} and, recently, for random substitutions \cite{Miro}.
A sufficient condition for a  Pisot random substitution 
subshift $(X_{\vartheta},S)$ to not be topologically mixing in terms of recognisable words is given in \cite{Miro} as follows. 

\begin{theorem}[{\cite[Thm.~35]{Miro}}]\label{thm: main}
A random substitution $\vartheta$ that is Pisot and has recognisable words at all levels induces a dynamical system $(X_{\vartheta},S)$ that is not topologically mixing. \qed
\end{theorem}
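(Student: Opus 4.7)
The plan is to argue by contradiction. Assume $(X_\vartheta,S)$ is topologically mixing. By Proposition~\ref{prop: mixing shift space}, for every ordered pair $u,v \in \mathcal{L}(X_\vartheta)$ the set
\[
G(u,v) := \{m \in \mathbb{N} : \exists\, w \in \mathcal{L}^m(X_\vartheta) \text{ with } uwv \in \mathcal{L}(X_\vartheta)\}
\]
must be cofinite in $\mathbb{N}$. My aim is to exhibit a specific $(u,v)$ for which this fails, using recognisability to rigidify the positions of $u,v$ inside legal extensions and the Pisot property to produce length gaps.

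First I would fix a sufficiently large level $k$ and use the hypothesis to obtain a legal word $u$ that is level-$k$ recognisable; for simplicity take $v = u$. Level-$k$ recognisability means that whenever $u$ is embedded in a legal bi-infinite word $x \in X_\vartheta$, the level-$k$ cuts inside $u$ are pinned down at specific internal positions $p_0 < p_1 < \cdots < p_\ell$ of $u$, together with the super-letters $\beta^{(u)}_j$ whose level-$k$ inflations span the intervals $[p_{j-1}, p_j)$. If $uwu$ is legal and embedded in $x$, the cuts inside the left copy of $u$ sit at $p_j$ and those inside the right copy at $p_j + |u| + |w|$. The unique super-decomposition of $x$ forces the stretch between the last cut in the left $u$ and the first cut in the right $u$ to be a concatenation of level-$k$ inflation words, yielding
\[
|u|+|w|-p_\ell+p_0 \;=\; \sum_{i=1}^{s}|\vartheta^k(\beta_i)|
\]
for some legal super-word $\beta^{(u)}_\ell\,\beta_1\cdots\beta_s\,\beta^{(u)}_1 \in \mathcal{L}(X_\vartheta)$. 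Consequently $G(u,u)$ is contained in a translate $c(u,k)+\Sigma_k$ of the length spectrum
\[
\Sigma_k := \Bigl\{\textstyle\sum_{i=1}^{s}|\vartheta^k(\beta_i)| : \beta_1\cdots\beta_s \text{ legal with prescribed endpoint letters}\Bigr\},
\]
so under the mixing assumption $\Sigma_k$ itself must be cofinite in $\mathbb{N}$.

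The Pisot hypothesis should now force the contradiction. Writing $\lambda$ for the Perron--Frobenius eigenvalue of $M_\vartheta$, the Pisot property gives $|\vartheta^k(\beta)| = c_\beta\lambda^k + O(\mu^k)$ for some $|\mu|<1$. Combined with the balance property of Pisot substitution languages---the $\beta$-count of a legal length-$s$ super-word differs from $\pi_\beta s$ by $O(1)$, where $\pi$ is the normalised left Perron eigenvector---the admissible totals $\sum_i|\vartheta^k(\beta_i)|$ over length-$s$ super-words lie in an interval of width $O(\lambda^k)$ centred on $s\lambda^k\langle\pi,c\rangle$. Consecutive intervals (as $s$ varies) are separated by $\lambda^k\langle\pi,c\rangle$, and for an appropriate choice of $u$ the implied constants in the balance estimate can be controlled so that these intervals remain disjoint at large scale, leaving gaps of size $\Omega(\lambda^k)$ between successive clusters of $\Sigma_k$. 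Hence $\Sigma_k$ is not cofinite, contradicting the mixing hypothesis.

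The main obstacle I expect is the quantitative balance estimate in the \emph{random} setting, strong enough to ensure the cluster widths are strictly dominated by the cluster spacings. Adamczewski-style balance for deterministic Pisot substitutions is classical, but for a random substitution one needs uniformity over all legal super-words compatible with the endpoint letters pinned down by recognisability. Since a Pisot random substitution is automatically semi-compatible and its substitution matrix coincides with that of a deterministic Pisot substitution, the classical balance framework should transfer; nevertheless, accounting for the additional flexibility of random images and tracking the contribution of the fixed offsets $p_0,p_\ell$ so that gaps persist uniformly in $s$ is where the bulk of the analytic work lies.
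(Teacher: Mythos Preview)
The paper does not actually prove this theorem: it is quoted from \cite{Miro} and marked with a \qed, followed only by a one-sentence summary of the argument---balancedness coming from the Pisot property, together with level-$n$ recognisable words, forces a ``hierarchy of missing lengths'' between consecutive occurrences of such words, contradicting Proposition~\ref{prop: mixing shift space}. Your outline is exactly this strategy (recognisability pins the level-$k$ cut positions inside each copy of $u$; the stretch between the two copies must then be a concatenation of level-$k$ inflation words; Pisot plus balance constrains the admissible total lengths), so at the level of detail the paper supplies, your approach and the cited one coincide.

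One caution. You work at a single ``sufficiently large'' level $k$ and argue that the length spectrum $\Sigma_k$ is not cofinite. But in your own estimate both the cluster width (balance constant times $\max_\beta|\vartheta^k(\beta)|$) and the cluster spacing $\bar\ell^{(k)}=\sum_\beta\pi_\beta|\vartheta^k(\beta)|$ are of order $\lambda^k$, so whether gaps survive reduces to a comparison of fixed constants that need not be favourable for any single $k$. The paper's insistence on recognisable words at \emph{all} levels and its phrase ``hierarchy of missing lengths'' signal that the argument in \cite{Miro} does not live at one scale: one uses the whole tower, choosing $k$ in terms of the target length so that the $O(|\mu|^k)$ Pisot corrections---not the balance constant alone---govern the cluster width. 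Expect to need that refinement when you fill in the details. A minor point: semi-compatibility is a \emph{prerequisite} for defining the substitution matrix and hence the Pisot property in the random setting, not a consequence of it.
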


The proof of Theorem~\ref{thm: main} uses $C$-balancedness (which comes from the Pisot property) and the existence of level-$n$ recognisable words, say $u$, to demonstrate 
that consecutive occurrences of $u$ admit a hierarchy of ``missing lengths" between them, hence violating the condition in Proposition~\ref{prop: mixing subshift}. 

\begin{remark}
The notion of $C$-balancedness for substitutions was introduced by Adamczewski in \cite{Adam} as a generalisation of balancedness for binary alphabets introduced by Morse and Hedlund. When $C=1$, a $1$-balanced word $x$ is simply called balanced, which is exactly the notion used to characterise Sturmian words. 
An infinite word $x\in\mathcal{A}^{\mathbb{N}}$ over a finite alphabet is called  $C$-balanced if for every finite subword $u,v$ of $w$ with $|u|=|v|$, one has $||u|_{a}-|v|_a|\leqslant C$ for all $a\in\mathcal{A}$. One can also talk about $C_{w}$-balancedness on a specific finite word $w$, where one replaces $a$ with $w$. 
We refer the reader to \cite{BB} for a more comprehensive account on $C$-balancedness for subshifts.  
\exend
\end{remark}

In what follows, we show that the random noble Pisa substitution $\psi_{n,p}$ admits recognisable words at all levels for $\{n,p\}\subset\mathbb{N}\setminus\{1\}$; that is, for any natural number $k>0$, there exists a level-$k$ recognisable legal word; see Theorem~\ref{thm: recog words}. 
Since all random noble Pisa substitutions are Pisot, the next result follows directly from Theorem \ref{thm: main}. 

\begin{theorem}\label{thm: non-mixing}
Given $\{n,p\}\subset\mathbb{N}\setminus\{1\},$ the random noble Pisa substitution $\psi_{n,p}$ induces a dynamical system $(X_{n,p},S)$ that is not topologically mixing. \qed
\end{theorem}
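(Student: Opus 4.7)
The plan is to invoke Theorem~\ref{thm: main} directly, since the present statement is essentially its specialisation to $\vartheta = \psi_{n,p}$. The work is therefore purely verificational: I need to check that $\psi_{n,p}$ is Pisot and that it admits recognisable words at every level $k \in \mathbb{N}$.

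The Pisot property is already in hand. Proposition~\ref{prop: Pisot} shows that the deterministic substitution $\xi_{n,p}$ is Pisot, and because $\psi_{n,p}$ is semi-compatible with $M_{\psi_{n,p}} = M_{\xi_{n,p}}$ (noted explicitly in Section~\ref{sec: random subs}), its Perron--Frobenius eigenvalue $\lambda_{n,p}$ is also a Pisot--Vijayaraghavan number. For the second hypothesis I would cite Theorem~\ref{thm: recog words}, which, under the assumption $\{n,p\} \subset \mathbb{N}\setminus\{1\}$, supplies for every $k \in \mathbb{N}$ a legal word $u$ whose $\psi_{n,p}^k$-preimage is unique and whose decomposition into level-$k$ inflation words is unique up to possibly a prefix and a suffix. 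Combining these two inputs, Theorem~\ref{thm: main} then delivers that $(X_{n,p},S)$ fails to be topologically mixing, and the argument is complete.

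The main obstacle is not in this one-line deduction but in establishing Theorem~\ref{thm: recog words}, on which everything here depends. The restriction $\{n,p\}\subset\mathbb{N}\setminus\{1\}$ enters precisely there: when $p\geqslant 2$ the map $\alpha_i \mapsto \{\alpha_1^{p-j}\alpha_{i+1}\alpha_1^j : 0\leqslant j \leqslant p\}$ is genuinely multi-valued, so producing a legal pattern that forces a unique desubstitution at every level requires a careful combinatorial analysis of admissible level-$k$ inflation words. Once that analysis (carried out in Section~\ref{sec: level k recog}) is available, the present theorem is immediate from Theorem~\ref{thm: main}, which is exactly what the \textbf{\qed} at the end of the statement reflects.
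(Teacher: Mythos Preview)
Your proposal is correct and follows exactly the paper's own argument: the paper states explicitly that the result follows directly from Theorem~\ref{thm: main}, using the Pisot property (via Proposition~\ref{prop: Pisot} and semi-compatibility) together with the existence of recognisable words at all levels established in Theorem~\ref{thm: recog words}. There is nothing to add.
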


The concept of recognisable words is derived from the general question of recognisability: given a substitution $\vartheta$ and a $\vartheta$-legal word $u$, can we uniquely decompose $u$ as a concatenation of inflation words $\cup_{a\in \mathcal{A}}\,\, \vartheta(a)$?
As in the deterministic setting, a random substitution $\vartheta$ is said to be globally recognisable if for each bi-infinite word $x \in X_\vartheta$ there is a unique preimage $y\in X_\vartheta$ such that $S^k(x) \in \vartheta(y)$ for some unique natural number $k$ \cite{Rust}; see \cite{BSTY} for the analogue for $S$-adic systems. 
But unlike deterministic substitutions, where aperiodicity is equivalent to global recognisability \cite{Mosse}, known examples of random substitutions like the random Fibonacci and, in general, the random noble Pisa are not recognisable; see Proposition~\ref{prop: identical set cond noble Pisa}. 
Fortunately, as stated in Theorem \ref{thm: main}, we do not need global recognisability to establish absence of mixing.

Here, we formally define the notion of level-$k$ recognisability for $\vartheta$-legal words introduced in \cite{Miro}. We first define the level-$k$ inflation word of a given legal word. 

\begin{definition}[Level-$k$ inflation word decomposition, \cite{Miro}]\label{def: level-k IWD}
Let $k\in\mathbb{N}$, $\vartheta$ a random  substitution, and $u\in \mathcal{L}(\vartheta)$ a legal word. 
A \emph{level-$k$ inflation word decomposition  of $u$} is a tuple $\left(\left[u^{(1)},\ldots,u^{(|v|)}\right], v\right)$, where $v$ is a legal word and $u=u^{(1)}\cdots u^{(|v|)}$ such that $u^{(i)} \in \vartheta^k(v_i)$ for all $2\leqslant i \leqslant |v|-1$, $u^{(1)}$ is a suffix of a realisation in $\vartheta^k(v_1)$, and $u^{(|v|)}$ is a prefix of a realisation in $\vartheta^k(v_{|v|})$. We let $\mathcal{D}_{\vartheta^k}(u)$ denote the set of all level-$k$ inflation word decompositions of $u$.

The tuple $\left[u^{(1)},\ldots,u^{(|v|)}\right]$ is called a \emph{$\vartheta^k$-cutting} of $u$, while the word $v$ is called a \emph{level-$k$ root} of $u$ associated to a $\vartheta^k$-cutting of $u$.  
\end{definition}

\begin{definition}\label{def: level k recog}
Let $\vartheta$ be a random substitution on $\mathcal{A}$.
A legal word $u\in \mathcal{L}(\vartheta)$ is said to be a \emph{level-$k$ recognisable word with respect to $\vartheta$} if the $\vartheta^k$-cutting of $u$ is unique and $u$ uniquely determines its central root $v_2\cdots v_{|v|-1}$ if $|v|> 2$; or if $|v| \leqslant 2$, $u$ uniquely determines its root $v$. 
\end{definition}

It should be noted that our definition of recognisable word is stronger than the definition given in \cite{Miro}, where a legal word $u$ is defined as level-$k$ $N$-recognisable if for all legal words of the form $w=u^{(l)}uu^{(r)}$ with $|u^{(l)}|=|u^{(r)}|=N$, all level-$k$ inflation word decompositions of $w$ induce a unique level-$k$ inflation word decomposition on $u$. In fact, if $u$ satisfies the conditions in Definition \ref{def: level k recog}, then it is level-$k$ $0$-recognisable. 

\begin{example}[Level-$2$ recognisability]
Consider  $\psi_{3,1}:\mathcal{P}\left(\{a,b,c\}^+\right)\rightarrow\mathcal{P}\left(\{a,b,c\}^+\right)$ and its square $\psi^2_{3,1}$, which are given by
$$
\begin{array}{cc}
\psi_{3,1}=
\begin{cases}
a\mapsto \{ab,ba\}&\\
b\mapsto \{ac,ca\}&\\
c\mapsto \{a\}
\end{cases}
&
\psi_{3,1}^2=
\begin{cases}
a\mapsto 
\left\{
\begin{array}{cc}
    abac,baac,baca,abca,\\
    acab,acba,caba,caab 
\end{array}
\right\}
&\\
b\mapsto \{aab,aba,baa\}&\\
c\mapsto \{ab,ba\}
\end{cases}.
\end{array}
$$

The word $abaccaba$ has a unique $\psi_{3,1}^2$-cutting $[abac,caba]$ which determines the unique root $aa$. Hence, $\mathcal{D}_{\psi_{3,1}^2}(abaccaba) = \{([abac,caba],aa)\}$, and so $abaccaba$ is a level-$2$ recognisable word. 
However, a unique cutting does not necessarily imply that one has a unique root. 
For example, although the word $bb$ admits a unique $\psi_{3,1}^2$-cutting $[b,b]$, yet its root could have easily been any pair of letters.
Conversely, a unique root is not always associated to a unique cutting. Take the word $cac$, which has $aa$ as a unique root under $\psi_{3,1}^2$ but is associated to two $\psi_{3,1}^2$-cuttings $[ca,c]$ and $[c,ac]$. 

For longer words with a unique cutting whose associated roots are of length at least $3$, level-$k$ recognisability is only up to some prefix and suffix. 
For example, the set of all level-$2$ decompositions of the word $babaccabaa$ is given by
\begin{align*}
\mathcal{D}_{\psi_{3,1}^2}(babaccabaa)= \left\{
\begin{array}{c}
([b,abac,caba,a], aaaa),
([b,abac,caba,a], aaab),\\
([b,abac,caba,a], aaac),
([b,abac,caba,a], baaa),\\
([b,abac,caba,a], baab),
([b,abac,caba,a], baac),\\
([b,abac,caba,a], caaa),
([b,abac,caba,a], caab),\\
([b,abac,caba,a], caac)
\end{array}
\right\},
\end{align*}
from which it is clear that its unique $\psi_{3,1}^2$-cutting $[b,abac,caba,a]$ determines a unique central root $aa$. 
Hence, the word $babaccabaa$ is level-$2$ recognisable with respect to $\psi_{3,1}$
\exend

\end{example}

We are now ready to discuss a specialised construction that will allow us to construct recognisable words at all levels with respect to a random noble Pisa substitution $\psi_{n,p}$. The construction uses a map $\Gamma_{n,p}\colon \mathcal{L}(\psi_{n,p})\to \mathcal{L}(\psi_{n,p})$ with left radius 1, 
where a legal word $w$ is sent to a specific element $\Gamma_{n,p}(w)\in\psi_{n,p}(w)$. For 
$w=w_1\cdots w_{L}$ we define this realisation via $\Gamma_{n,p}(w_1w_2\cdots w_L)=\Gamma_{n,p}({ }_{w_0}w_1)\Gamma_{n,p}({ }_{w_1}w_2)\cdots\Gamma_{n,p}({ }_{w_{L-1}}w_{L})$ with $w_0=\epsilon$, which is determined by
\begin{align}
    \Gamma_{n,p}({ }_{w_{j-1}}w_j)=
    \begin{cases}
    a_1,& \text{if}\; w_j=a_n,\\
    a_{i+1}a_1^p,& \text{if}\; w_j=a_i\neq a_n\;\text{and}\;w_{j-1}\neq a_n,\\
    a_1^pa_{i+1},& \text{if}\; w_j=a_i\neq a_n\;\text{and}\;w_{j-1}= a_n.
    \end{cases}
\end{align}
This means that to decide which realisation of a letter within a word $w$ one picks, one has to look at the letter to its left. One can also view this map as $\Gamma_{n,p}=f\circ\varrho $, where $\varrho$ is a deterministic substitution on the alphabet $\mathcal{A}^{(2)}$ of left-collared letters ${}_{a}b$ coming from length-$2$ legal words in $\mathcal{L}(X_{n,p})$, and $f$ is the forgetful map, which erases the collar, i.e., $f({ }_a{b})=b$, for all ${ }_a{b}\in \mathcal{L}^{2}(X_{n,p})$.   
We refer to this construction as the $\Gamma$-construction and, when no possible confusion may arise, we simply write $\Gamma$ for $\Gamma_{n,p}$. 

\begin{example}
Consider $\psi_{3,3}: a \mapsto \{aaab, aaba, abaa, baaa\}, b\mapsto \{aaac, aaca, acaa, caaa\}, c\mapsto \{a\}$, which is a random noble Pisa substitution. Applying the $\Gamma$-construction to the $\psi_{3,3}$-legal word $acbaa$, one has
\[
\Gamma(acbaa)=\Gamma(\epsilon a)\Gamma(ac)
\Gamma(cb)\Gamma(ba)\Gamma(aa)
=
baaa|
a|  
aaac|
baaa|
baaa.
\]
One can also iterate the map to define the powers $\Gamma^{k}$ for $k\in\mathbb{N}$. 
For example, the images $\Gamma^{k}(a)$ for $1\leqslant k \leqslant 3$ are given by 
$\Gamma(a)=ba^3, 
\Gamma^{2}(a)=ca^3(ba^3)^3,$ and 
$\Gamma^{3}(a)=a(a^3b) (ba^3)^2(ca^3ba^3)^3.$
\exend
\end{example}

We have the next Lemma that highlights the following useful properties of the $\Gamma$-construction that follow immediately from the definitions; hence, we omit its proof.

\begin{lemma}
Let $\psi_{n,p}$ be a random noble Pisa substitution over the alphabet $\mathcal{A}=\{a_i\mid 1\leqslant i\leqslant n\}$.
Then, for any natural number $k$, $\Gamma^k$ is well defined and $\Gamma^k(u)$ is a realisation in $\psi_{n,p}^k(u)$, for any $u\in\mathcal{A}^+$.
Moreover, for any $a_i\in\mathcal{A}$, one has that  $\reallywidetilde{\Gamma^{k}(a_i)}\in \psi^{k}_{n,p}(a_i)$, 
where $\widetilde{w}=w_{|w|} w_{|w|-1}\cdots w_1$ is the image of $w=w_1\cdots w_{|w|}$
under reflection. \qed
\end{lemma}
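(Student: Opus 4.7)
The plan is a direct two-part induction on $k$. First I would address well-definedness and the inflation-word claim, and only then tackle the reversal statement, which is the one part requiring a small structural observation about $\psi_{n,p}$.

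\smallskip

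\textbf{Step 1 (well-definedness and the first inclusion).} The definition of $\Gamma$ assigns a nonempty word over $\mathcal{A}$ to every pair $w_{j-1}w_j$ and uses only the two-letter window, so as a sliding block map $\Gamma\colon\mathcal{A}^+\to\mathcal{A}^+$ it is well defined; this immediately lets one iterate to get $\Gamma^k$ for each $k\in\mathbb{N}$. For the inclusion $\Gamma(u)\in\psi_{n,p}(u)$ I would simply check, case by case, that each block $\Gamma(w_{j-1}w_j)$ lies in $\psi_{n,p}(w_j)$: the case $w_j=\alpha_n$ gives $\alpha_1\in\psi_{n,p}(\alpha_n)$; the case $w_{j-1}\neq \alpha_n$, $w_j=\alpha_i$ gives $\alpha_{i+1}\alpha_1^p$, which is the $j=p$ realisation in \eqref{eq: RNP definition}; and the case $w_{j-1}=\alpha_n$, $w_j=\alpha_i$ gives $\alpha_1^p\alpha_{i+1}$, the $j=0$ realisation. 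Hence $\Gamma(u)\in\psi_{n,p}(u)$ for every $u\in\mathcal{A}^+$. A trivial induction on $k$ then yields $\Gamma^{k+1}(u)=\Gamma(\Gamma^k(u))\in\psi_{n,p}(\Gamma^k(u))\subseteq \psi_{n,p}(\psi_{n,p}^k(u))=\psi_{n,p}^{k+1}(u)$.

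\smallskip

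\textbf{Step 2 (reversal invariance of $\psi_{n,p}$).} The key structural observation is that each set $\psi_{n,p}(\alpha)$ is closed under reflection. Indeed, for $i<n$ we have $\widetilde{\alpha_1^{p-j}\alpha_{i+1}\alpha_1^{j}}=\alpha_1^{j}\alpha_{i+1}\alpha_1^{p-j}$, which is the $j'=p-j$ element of $\psi_{n,p}(\alpha_i)$; and $\psi_{n,p}(\alpha_n)=\{\alpha_1\}$ is trivially reflection-invariant. Because reversal intertwines concatenation as $\widetilde{uv}=\widetilde v\,\widetilde u$, this letter-level symmetry propagates to arbitrary words: for every $v\in\mathcal{A}^+$,
\begin{equation*}
  \widetilde{\psi_{n,p}(v)}\;=\;\psi_{n,p}(\widetilde v).
\end{equation*}

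\smallskip

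\textbf{Step 3 (induction for the reversed claim).} The base case $k=0$ is vacuous and $k=1$ follows from $\Gamma(\alpha_i)=\alpha_{i+1}\alpha_1^p$ together with the $j=0$ realisation in $\psi_{n,p}(\alpha_i)$ (for $i=n$ it is trivial). For the inductive step, suppose $\reallywidetilde{\Gamma^k(\alpha_i)}\in \psi_{n,p}^k(\alpha_i)$. Using Step~1 for the word $\Gamma^k(\alpha_i)$ we have $\Gamma^{k+1}(\alpha_i)=\Gamma(\Gamma^k(\alpha_i))\in\psi_{n,p}(\Gamma^k(\alpha_i))$, so by Step~2,
\begin{equation*}
  \reallywidetilde{\Gamma^{k+1}(\alpha_i)}\;\in\;\widetilde{\psi_{n,p}(\Gamma^k(\alpha_i))}\;=\;\psi_{n,p}\bigl(\reallywidetilde{\Gamma^k(\alpha_i)}\bigr)\;\subseteq\;\psi_{n,p}\bigl(\psi_{n,p}^k(\alpha_i)\bigr)\;=\;\psi_{n,p}^{k+1}(\alpha_i),
\end{equation*}
which closes the induction.

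\smallskip

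\textbf{Expected main obstacle.} The argument is essentially bookkeeping, but the one place that could mislead is Step~2: one must notice that $\psi_{n,p}$ is letter-wise palindromic (the parameter $j$ and $p-j$ are in bijection), since this is exactly what makes the reversed word still a legal level-$k$ realisation even though $\Gamma$ itself is manifestly \emph{not} symmetric with respect to reflection. Once that symmetry is in hand, the induction goes through essentially for free.
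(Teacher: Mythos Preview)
Your argument is correct and is exactly the natural way to fill in what the paper leaves implicit: the lemma is stated there with a bare \qed, so there is no proof in the paper to compare against. Your Step~2 observation that each $\psi_{n,p}(\alpha_i)$ is closed under reflection (via the bijection $j\leftrightarrow p-j$), and hence $\widetilde{\psi_{n,p}(v)}=\psi_{n,p}(\widetilde v)$ for all $v\in\mathcal{A}^+$, is precisely the structural fact one needs, and the inductions in Steps~1 and~3 are routine once this is in hand.
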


\begin{example}\label{ex: np22}
Consider the random noble Pisa substitution $\psi_{2,2}: a \mapsto \{aab, aba, baa\}, b\mapsto \{a\}$ and its square 
    \[
    \begin{array}{lc}
        \psi^2_{2,2}: a \mapsto
        \left\{
        \begin{array}{c}
        aaabaab,aaababa,aaabbaa,\\
        aabaaab,aabaaba,aababaa,\\
        aabbaaa,abaaaab,abaaaba,\\
        abaabaa,ababaaa,baaaaab,\\
        baaaaba,baaabaa,baabaaa
        \end{array}
        \right\}, 
        &
           b \mapsto \{aab,aba, baa\}.
    \end{array}
\]
    The first two iterates of  $\Gamma$  on the letter $a$ are given by $\Gamma(a)=baa$ and $\Gamma^2(a)=a aab baa$. 
    It can be readily verified that $\Gamma(a)$ and $\reallywidetilde{\Gamma(a)}$ are realisations in $\psi_{2,2}(a)$  while $\Gamma^2(a)$ and $\reallywidetilde{\Gamma^2(a)}$ are realisations in $\psi_{2,2}^2(a)$.
    
    Both $\reallywidetilde{\Gamma(a)}
    \Gamma(a)=aabbaa \in \psi_{2,2}(aa)$ and $\reallywidetilde{\Gamma^2(a)}
    \Gamma^2(a)=aabbaaaaaabbaa \in \psi^2_{2,2}(aa)$ are $\psi_{2,2}$-legal words, as $aa$ is a $\psi_{2,2}$-legal word. 
    Moreover, we can show that $\reallywidetilde{\Gamma(a)}
    \Gamma(a)$ and $\reallywidetilde{\Gamma^2(a)}
    \Gamma^2(a)$ are level-$1$ and level-$2$ recognisable words, respectively. 
    For $\reallywidetilde{\Gamma(a)}\Gamma(a)=aabbaa$, since no $\psi_{2,2}$-inflation word contains the word $bb$, there must be a cutting between the two $b$'s and each $b$ must come from an inflation word in $\psi_{2,2}(a)$ resulting in a unique level-1 inflation word decomposition $\mathcal{D}_{\psi_{2,2}}(aabbaa) = \{([aab,baa],aa)\}$ or, equivalently, $\mathcal{D}_{\psi_{2,2}}\left(\reallywidetilde{\Gamma(a)}\Gamma(a)\right) = \left\{\left(\left[\reallywidetilde{\Gamma(a)},\Gamma(a)\right],aa\right)\right\}$. 
    Hence, the $\psi_{2,2}$-legal word $\reallywidetilde{\Gamma(a)}
    \Gamma(a)=aabbaa$ is level-1 recognisable.

    Now, for $\reallywidetilde{\Gamma^2(a)} \Gamma^2(a)=aabbaaaaaabbaa$, we first note that the central word $aaaaaa$ is not contained in any $\psi^2_{2,2}$-inflation word and so there must be a $\psi^2_{2,2}$-cutting within $aaaaaa$. 
    It can be verified that the only possible $\psi^2_{2,2}$-cutting of $aaaaaa$ within $aabbaaaaaabbaa$ is in the middle, i.e. $[aabbaaa,aaabbaa]$ which is exactly $\left[\reallywidetilde{\Gamma^2(a)},\Gamma^2(a)\right]$. 
    Further, we note that $\Gamma^2(a)$ (and hence $\reallywidetilde{\Gamma^2(a)}$) is longer than any inflation word in $\psi^2_{2,2}(b)$, so they are not, respectively, a prefix nor a suffix of any inflation word in $\psi^2_{2,2}(b)$. 
    Conversely, no inflation word in $\psi^2_{2,2}(b)$ is a prefix of $\Gamma^2(a)$ or a suffix of $\reallywidetilde{\Gamma^2(a)}$. 
    These observations allow us to conclude that 
    $$\mathcal{D}_{\psi^2_{2,2}}\left(\reallywidetilde{\Gamma^2(a)} \Gamma^2(a)\right) = \{([aabbaaa,aaabbaa],aa)\}=\left\{\left(\left[\reallywidetilde{\Gamma(a)},\Gamma(a)\right],aa\right)\right\},$$
    which implies that $\reallywidetilde{\Gamma^2(a)} \Gamma^2(a)=aabbaaaaaabbaa$ is a level-2 recognisable word. 
     \exend
\end{example}
    
    We now prove the key idea illustrated in the previous example in the general case. Specifically, for any natural number $k$, we show in Theorem \ref{thm: recog words} that $\reallywidetilde{\Gamma^k(a)}\Gamma^k(a)$ is a level-$k$ recognisable word with a unique inflation word decomposition $\left(\left[\reallywidetilde{\Gamma^k(a)},\Gamma^k(a)\right],aa\right)$.
    To this end, we first need some technical results. 
   
\begin{proposition}\label{pro: not-pre-suf}
Let $\psi_{n,p}$ be a random noble Pisa substitution over $\mathcal{A}=\{a_i\mid 1\leqslant i\leqslant n\}$, where $\{n,p\}\subset\mathbb{N}\setminus\{1\}$. One then has the following:
\begin{enumerate}
    \item The word $\Gamma^k(a_1)$ is never shorter than any inflation word in $\psi_{n,p}^k(a_i)$ for all $i\neq 1$.
    \item If $y$ is an inflation word in $\psi_{n,p}^k(a_i)$ for any $i\neq 1$, then $y$ is not a prefix of $\Gamma^k(a_1)$ nor a suffix of $\reallywidetilde{\Gamma^k(a_1})$.
\end{enumerate}

\end{proposition}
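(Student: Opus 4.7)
The plan is to handle the two assertions separately, with both leveraging the semi-compatibility of $\psi_{n,p}$ and the explicit structure of the $\Gamma$-construction.

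For part (1), semi-compatibility ensures that every element of $\psi_{n,p}^k(\alpha^{  }_i)$ has a common length $L_k^{(i)} := |\psi_{n,p}^k(\alpha^{  }_i)|$, so in particular $|\Gamma^k(\alpha^{  }_1)| = L_k^{(1)}$. Reading off the substitution rule yields the recursion
\[
L_k^{(i)} = p\,L_{k-1}^{(1)} + L_{k-1}^{(i+1)} \quad (i \neq n), \qquad L_k^{(n)} = L_{k-1}^{(1)}.
\]
I would first establish by induction on $k$ the monotonicity $L_k^{(1)} \geq L_k^{(2)} \geq \cdots \geq L_k^{(n-1)} > L_k^{(n)}$; the rightmost inequality is immediate from $L_k^{(n-1)} - L_k^{(n)} = (p-1)L_{k-1}^{(1)} + L_{k-1}^{(n)} > 0$ using $p \geq 2$, and the other $\geq$'s follow directly from the recursion and the inductive hypothesis. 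Strictness $L_k^{(1)} > L_k^{(i)}$ for every $i \neq 1$ is then obtained by telescoping $L_k^{(1)} - L_k^{(i)} = L_{k-1}^{(2)} - L_{k-1}^{(i+1)}$, which after finitely many unfoldings lands on the strict gap at the $(n-1, n)$ end.

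For part (2), the idea is to compare the leading letter of $\Gamma^k(\alpha^{  }_1)$ with the leading letters of realisations in $\psi_{n,p}^k(\alpha^{  }_i)$ for $i \neq 1$. Unfolding the $\Gamma$-recursion (using that the left context is never $\alpha^{  }_n$ at the very start) shows by induction that $\Gamma^k(\alpha^{  }_1)$ begins with $\alpha^{  }_{k+1}$ for $1 \leq k \leq n-1$. Writing $S_k(i) \subseteq \mathcal{A}$ for the set of first letters of realisations in $\psi_{n,p}^k(\alpha^{  }_i)$, one has $S_k(i) = \bigcup_{\alpha \in S_{k-1}(i)} S_1(\alpha)$ with $S_1(j) = \{\alpha^{  }_1, \alpha^{  }_{j+1}\}$ for $j \neq n$ and $S_1(n) = \{\alpha^{  }_1\}$. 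A reachability argument for the dynamics $j \mapsto 1$ or $j \mapsto j+1$ then yields $\alpha^{  }_{k+1} \notin S_k(i)$ for $i \neq 1$ and $k \leq n-1$: a trajectory of length $k$ from $i$ either avoids a reset and ends at $\alpha^{  }_{i+k} \neq \alpha^{  }_{k+1}$ (since $i \geq 2$), or resets at some step $s \geq 1$ and thereafter reaches only up to $\alpha^{  }_{k-s+1}$, strictly before index $k+1$.

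For $k \geq n$, both $\Gamma^k(\alpha^{  }_1)$ and many realisations in $\psi_{n,p}^k(\alpha^{  }_i)$ begin with $\alpha^{  }_1$, so I would continue the induction by writing $\Gamma^k(\alpha^{  }_1) = \Gamma(\Gamma^{k-1}(\alpha^{  }_1))$ and decomposing any candidate $y$ as $\psi(v)$ for some $v \in \psi_{n,p}^{k-1}(\alpha^{  }_i)$; a prefix coincidence forces the initial level-one inflation blocks to align, and, combined with the length bound of part~(1), propagates down to a prefix coincidence at level $k-1$, contradicting the inductive hypothesis. The suffix statement reduces to the prefix one via reversal closure: each $\psi(\alpha^{  }_j)$ is closed under reversal (by the symmetry $j' \mapsto p-j'$), hence so is $\psi_{n,p}^k(\alpha^{  }_i)$, and ``$y$ is a suffix of $\reallywidetilde{\Gamma^k(\alpha^{  }_1)}$'' is equivalent to ``$\reallywidetilde{y}$ is a prefix of $\Gamma^k(\alpha^{  }_1)$'' with $\reallywidetilde{y} \in \psi_{n,p}^k(\alpha^{  }_i)$. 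The main obstacle is precisely the range $k \geq n$: once the first-letter obstruction disappears, propagating the prefix constraint through the left-context-dependent $\Gamma$-recursion and aligning it with the inflation-word factorisation of $y$ requires careful bookkeeping of the interaction between the sliding-block nature of $\Gamma$ and the substitution morphism $\psi$.
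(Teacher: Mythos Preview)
Your argument for part~(1) via the length recursion is correct and matches what the paper intends. Your first-letter argument for part~(2) in the range $1\leqslant k\leqslant n-1$ is a clean observation that the paper does not isolate; it is correct and gives that range essentially for free.

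The gap is in the inductive step for $k\geqslant n$. You assert that if $y\in\psi_{n,p}^k(\alpha_i)$ is a prefix of $\Gamma^k(\alpha_1)=\Gamma(w)$ with $w=\Gamma^{k-1}(\alpha_1)$, then writing $y\in\psi(v)$ for some $v\in\psi_{n,p}^{k-1}(\alpha_i)$, ``the initial level-one inflation blocks align'' and hence $v$ is a prefix of $w$. This is precisely the step that does not come for free. The level-one blocks of $y$ (coming from the letters of $v$) and those of $\Gamma(w)$ (coming from the letters of $w$) have lengths in $\{1,p+1\}$ depending on whether the underlying letter is $\alpha_n$ or not, and there is no a~priori reason these lengths agree position by position. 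Moreover, $\Gamma$ depends on the left neighbour while the realisation of $\psi(v)$ does not, so even where $v_j=w_j$ the corresponding level-one blocks can differ. The random substitution $\psi_{n,p}$ is known not to be locally recognisable (as the paper itself notes), so a prefix of $\Gamma(w)$ that happens to lie in $\psi(v)$ need not force $v$ to be a prefix of $w$; at best, one must extract \emph{some} preimage $v'\in\psi^{k-1}(\alpha_i)$ that is a prefix of $w$, and your outline does not say how to do this.

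The paper circumvents this by proving a \emph{stronger} statement that is stable under one application of $(\psi,\Gamma)$: for every suffix $z$ of an element $y\in\psi^{k}_{n,p}(\alpha_i)$ and every prefix $x$ of $\Gamma^{k}(\alpha_1)$ with $|z|\leqslant|x|$, if $z$ is not a prefix of $x$ then no realisation $u\in\psi_{n,p}(z)$ with $|u|\leqslant|\Gamma(x)|$ is a prefix of $\Gamma(x)$. This is exactly the form that absorbs the misalignment: one locates the first position $D$ where $x$ and $z$ disagree and then checks, through a letter-configuration case analysis around $x_D$ and $z_D$ (including the interaction with the left-context rule of $\Gamma$), that a mismatch must appear between $u$ and $\Gamma(x)$. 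Your proposed reduction at level $k$ to level $k-1$ would in effect require proving this stronger statement anyway; without it, the ``propagation'' step is unjustified.
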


\begin{proof}[Sketch of proof] 
The first statement follows from the $\Gamma$-construction and definition of the mapping $\psi_{n,p}$.
To prove the second statement, we use induction on $k$ relying on the assertion that, given a suffix $z$ of a word $y \in \psi^k_{n,p}(a_i)$ such that $z$ is not longer than a prefix $x$ of $\Gamma^k(a_1)$ and $z$ is not a prefix of $x$, 
one has that any realisation $u$ in $\psi_{n,p}(z)$ cannot be a prefix of $\Gamma(x)$ whenever $|u|\leqslant |\Gamma(x)|$. 
Then one checks all possible cases of letter configurations of $x$ and $z$ to show that each configuration produces a position where $u$ and $\Gamma(x)$ differ. 
Since we require that $z$ is not a prefix of $x$, there is a $1\leqslant D\leqslant |z|$ such that $x_{D}\neq z_D$. We then split it into the following cases:
\vspace{2mm}

    \noindent \emph{Case 1.} There is an $1\leqslant \ell\leqslant D-1$ such that $\Gamma\left(x_{\ell-1}x_{\ell}\right)\neq t^{(\ell)},$ where $u=t^{(1)}\cdots t^{(|z|)}$.
    Here one immediately gets that a letter differs somewhere among the sites $\Gamma\left(x_{\ell-1}x_{\ell}\right)$ and $t^{(\ell)}$ of $\Gamma(x)$ and $u$, respectively.
\vspace{2mm}

   \noindent \emph{Case 2.} For all  $1\leqslant \ell\leqslant D-1$ such that $\Gamma\left(x_{\ell-1}x_{\ell}\right)= t^{(\ell)},$ where $u=t^{(1)}\cdots t^{(|z|)}$. Here one needs to look farther into $x_D,$ the letters around it, as well for the counterparts of these letters in $z$, and prove that for all of these subcases, a mismatch always occurs. 
\vspace{2mm}

Notice that Case 1 deals with the possibility that $x$ and $z$ might have the same letter at the same position (i.e. $x_\ell=z_\ell$) and yet it might be the case that the specific realisation $t^{(\ell)}$ of $z_\ell$ used to form $u$ is different from $\Gamma(x_{\ell-1}x_{\ell})$. On the other hand, Case 2 deals with the more general and complicated case where $x_D\neq z_D$ so it is now possible that $t^{(\ell)}$ and $\Gamma(x_{\ell-1}x_{\ell})$ are not of equal lengths so comparisons are more difficult to facilitate.
For a more detailed treatment of each branching case, we refer the reader to \cite[Lemma~3.25]{Escolano}. 
\end{proof}

Working with the random substitution in Example \ref{ex: np22}, we illustrate the assertion in the sketch of proof for Proposition \ref{pro: not-pre-suf} in the following example. 

\begin{example}
Consider $z=aab=y \in \psi^2_{2,2}(b)$ and $x=aaa$ a prefix of $\Gamma^2(a)=aaabbaa$. 
We then have $\psi_{2,2}(z)=\{aab aab a,aba aab a, baa aab a, aab aba a, aba aba a, baa aba a, aab baa a,aba baa a, baa baa a\}$ and $\Gamma(x)=(baa)^3aa(aab)(baa)$. 
Note that all the elements of $\psi_{2,2}(z)$ are of length $7$ and the $7$-letter prefix of $\Gamma(x)$ is $baabaab.$ 
Also notice that the $7$-letter prefix of $\Gamma(x)$ and the word $baa baa a$
in $\psi_{2,2}(z)$ differ only on their last letters,  which shows how precarious and technical the assertion is. 
\exend
\end{example}

The first statement of Proposition \ref{pro: not-pre-suf} implies that $\Gamma^k(a)$ is not a prefix of any inflation word in $\psi^k_{n,p}(a_i)$ for $i\neq 1$ and  $\reallywidetilde{\Gamma^k(a)}$ is not a suffix of any such inflation words making the $\psi^k_{n,p}$-cutting in Figure \ref{fig: cuttings} (b) of the word $\reallywidetilde{\Gamma^k(a)}\Gamma^k(a)$ in Figure \ref{fig: cuttings} (a) not possible. 
Meanwhile, the second statement of Proposition \ref{pro: not-pre-suf} allows us to rule out the $\psi^k_{n,p}$-cutting in Figure \ref{fig: cuttings} (c) of the same word.
    
\begin{figure}[!ht]

\tikzset{every picture/.style={line width=0.75pt}} 
\centering
\begin{tikzpicture}[x=0.75pt,y=0.75pt,yscale=-0.8,xscale=0.8]

\draw    (156.02,70.8) -- (504.56,70.8) ;
\draw [shift={(504.56,70.8)}, rotate = 180] [color={rgb, 255:red, 0; green, 0; blue, 0 }  ][line width=0.75]    (0,5.59) -- (0,-5.59)   ;
\draw [shift={(330.29,70.8)}, rotate = 180] [color={rgb, 255:red, 0; green, 0; blue, 0 }  ][line width=0.75]    (0,5.59) -- (0,-5.59)   ;
\draw [shift={(156.02,70.8)}, rotate = 180] [color={rgb, 255:red, 0; green, 0; blue, 0 }  ][line width=0.75]    (0,5.59) -- (0,-5.59)   ;
\draw   (326.37,60.84) .. controls (326.39,56.17) and (324.07,53.83) .. (319.4,53.81) -- (250.72,53.47) .. controls (244.05,53.44) and (240.73,51.09) .. (240.75,46.42) .. controls (240.73,51.09) and (237.39,53.41) .. (230.72,53.38)(233.72,53.39) -- (162.03,53.04) .. controls (157.36,53.02) and (155.02,55.34) .. (155,60.01) ;
\draw   (505,61.67) .. controls (505.02,57) and (502.7,54.66) .. (498.03,54.64) -- (429.35,54.3) .. controls (422.68,54.27) and (419.36,51.92) .. (419.38,47.25) .. controls (419.36,51.92) and (416.02,54.24) .. (409.35,54.21)(412.35,54.22) -- (340.66,53.87) .. controls (335.99,53.85) and (333.65,56.17) .. (333.63,60.84) ;

\draw [line width=1.5]  [dash pattern={on 1.69pt off 2.76pt}]  (330.37,80.76) -- (330.37,103.99) ;
\draw    (534.89,111.69) -- (126.58,110.32) ;
\draw [shift={(126.58,110.32)}, rotate = 360.19] [color={rgb, 255:red, 0; green, 0; blue, 0 }  ][line width=0.75]    (0,5.59) -- (0,-5.59)   ;
\draw [shift={(330.73,111)}, rotate = 360.19] [color={rgb, 255:red, 0; green, 0; blue, 0 }  ][line width=0.75]    (0,5.59) -- (0,-5.59)   ;
\draw [shift={(534.89,111.69)}, rotate = 360.19] [color={rgb, 255:red, 0; green, 0; blue, 0 }  ][line width=0.75]    (0,5.59) -- (0,-5.59)   ;
\draw   (335.25,120.98) .. controls (335.22,125.65) and (337.53,128) .. (342.2,128.03) -- (425.57,128.65) .. controls (432.24,128.7) and (435.55,131.06) .. (435.52,135.73) .. controls (435.55,131.06) and (438.9,128.75) .. (445.57,128.8)(442.57,128.78) -- (528.95,129.43) .. controls (533.62,129.46) and (535.97,127.15) .. (536,122.48) ;
\draw   (126,119.44) .. controls (125.97,124.11) and (128.28,126.46) .. (132.95,126.5) -- (216.32,127.12) .. controls (222.99,127.17) and (226.3,129.53) .. (226.27,134.2) .. controls (226.3,129.53) and (229.65,127.22) .. (236.32,127.27)(233.32,127.25) -- (319.69,127.9) .. controls (324.36,127.93) and (326.71,125.62) .. (326.75,120.95) ;

\draw    (403.83,201.69) -- (254.77,200.32) ;
\draw [shift={(254.77,200.32)}, rotate = 360.53] [color={rgb, 255:red, 0; green, 0; blue, 0 }  ][line width=0.75]    (0,5.59) -- (0,-5.59)   ;
\draw [shift={(329.3,201)}, rotate = 360.53] [color={rgb, 255:red, 0; green, 0; blue, 0 }  ][line width=0.75]    (0,5.59) -- (0,-5.59)   ;
\draw [shift={(403.83,201.69)}, rotate = 360.53] [color={rgb, 255:red, 0; green, 0; blue, 0 }  ][line width=0.75]    (0,5.59) -- (0,-5.59)   ;
\draw   (330.95,210.98) .. controls (330.86,215.64) and (333.14,218.02) .. (337.8,218.12) -- (357.45,218.52) .. controls (364.12,218.66) and (367.4,221.06) .. (367.3,225.73) .. controls (367.4,221.06) and (370.78,218.8) .. (377.45,218.94)(374.45,218.87) -- (397.09,219.34) .. controls (401.76,219.43) and (404.14,217.15) .. (404.23,212.49) ;
\draw   (254.56,209.44) .. controls (254.46,214.11) and (256.74,216.49) .. (261.41,216.59) -- (281.06,216.99) .. controls (287.73,217.13) and (291.01,219.53) .. (290.91,224.19) .. controls (291.01,219.53) and (294.39,217.27) .. (301.06,217.4)(298.06,217.34) -- (320.7,217.81) .. controls (325.37,217.9) and (327.75,215.62) .. (327.84,210.95) ;

\draw  [dash pattern={on 0.84pt off 2.51pt}]  (213.97,200.32) -- (254.83,200.32) ;
\draw    (98,200.32) -- (212.03,200.32) ;
\draw [shift={(212.03,200.32)}, rotate = 180] [color={rgb, 255:red, 0; green, 0; blue, 0 }  ][line width=0.75]    (0,5.59) -- (0,-5.59)   ;
\draw [shift={(98,200.32)}, rotate = 180] [color={rgb, 255:red, 0; green, 0; blue, 0 }  ][line width=0.75]    (0,5.59) -- (0,-5.59)   ;
\draw  [dash pattern={on 0.84pt off 2.51pt}]  (446.87,201.98) -- (403.83,201.69) ;
\draw    (558,201.98) -- (449.76,201.98) ;
\draw [shift={(449.76,201.98)}, rotate = 360] [color={rgb, 255:red, 0; green, 0; blue, 0 }  ][line width=0.75]    (0,5.59) -- (0,-5.59)   ;
\draw [shift={(558,201.98)}, rotate = 360] [color={rgb, 255:red, 0; green, 0; blue, 0 }  ][line width=0.75]    (0,5.59) -- (0,-5.59)   ;

\draw    (402.95,297.05) -- (258.3,295.68) ;
\draw [shift={(258.3,295.68)}, rotate = 360.53999999999996] [color={rgb, 255:red, 0; green, 0; blue, 0 }  ][line width=0.75]    (0,5.59) -- (0,-5.59)   ;
\draw [shift={(402.95,297.05)}, rotate = 360.53999999999996] [color={rgb, 255:red, 0; green, 0; blue, 0 }  ][line width=0.75]    (0,5.59) -- (0,-5.59)   ;
\draw   (259.08,306.34) .. controls (258.99,311.01) and (261.28,313.38) .. (265.95,313.47) -- (320.34,314.46) .. controls (327.01,314.59) and (330.3,316.98) .. (330.21,321.65) .. controls (330.3,316.98) and (333.67,314.71) .. (340.34,314.83)(337.34,314.77) -- (394.74,315.83) .. controls (399.4,315.92) and (401.77,313.63) .. (401.86,308.96) ;
\draw  [dash pattern={on 0.84pt off 2.51pt}]  (218.7,295.68) -- (258.35,295.68) ;
\draw    (75.21,295.68) -- (216.82,295.68) ;
\draw [shift={(216.82,295.68)}, rotate = 180] [color={rgb, 255:red, 0; green, 0; blue, 0 }  ][line width=0.75]    (0,5.59) -- (0,-5.59)   ;
\draw [shift={(75.21,295.68)}, rotate = 180] [color={rgb, 255:red, 0; green, 0; blue, 0 }  ][line width=0.75]    (0,5.59) -- (0,-5.59)   ;
\draw  [dash pattern={on 0.84pt off 2.51pt}]  (444.72,297.34) -- (402.95,297.05) ;
\draw    (582.59,297.34) -- (447.54,297.34) ;
\draw [shift={(447.54,297.34)}, rotate = 360] [color={rgb, 255:red, 0; green, 0; blue, 0 }  ][line width=0.75]    (0,5.59) -- (0,-5.59)   ;
\draw [shift={(582.59,297.34)}, rotate = 360] [color={rgb, 255:red, 0; green, 0; blue, 0 }  ][line width=0.75]    (0,5.59) -- (0,-5.59)   ;
\draw [line width=1.5]  [dash pattern={on 1.69pt off 2.76pt}]  (330.37,124.74) -- (330.37,191.95) ;
\draw [line width=1.5]  [dash pattern={on 1.69pt off 2.76pt}]  (329.3,221) -- (329.3,288.21) ;

\draw (222.06,21.69) node [anchor=north west][inner sep=0.75pt]    {$\widetilde{\Gamma ^{k}( a _{1})}$};
\draw (400.69,28.09) node [anchor=north west][inner sep=0.75pt]    {$\Gamma ^{k}( a _{1})$};
\draw (194.58,135.91) node [anchor=north west][inner sep=0.75pt]    {$\psi ^{k}_{n,p}( a _{i})$};
\draw (399.69,138.4) node [anchor=north west][inner sep=0.75pt]    {$\psi ^{k}_{n,p}( a _{j})$};
\draw (331.93,228.4) node [anchor=north west][inner sep=0.75pt]    {$\psi ^{k}_{n,p}( a _{j})$};
\draw (257.37,225.91) node [anchor=north west][inner sep=0.75pt]    {$\psi ^{k}_{n,p}( a _{i})$};
\draw (14,52.4) node [anchor=north west][inner sep=0.75pt]    {$a)$};
\draw (13,112.4) node [anchor=north west][inner sep=0.75pt]    {$b)$};
\draw (11,202.4) node [anchor=north west][inner sep=0.75pt]    {$c)$};
\draw (13,302.4) node [anchor=north west][inner sep=0.75pt]    {$d)$};
\draw (569,152.4) node [anchor=north west][inner sep=0.75pt]    {$i,j\neq 1$};
\draw (569,242.4) node [anchor=north west][inner sep=0.75pt]    {$i,j\neq 1$};
\draw (298,328.4) node [anchor=north west][inner sep=0.75pt]    {$\psi ^{k}_{n,p}( a _{j})$};
\draw (531,332.4) node [anchor=north west][inner sep=0.75pt]    {$j=1,\dotsc ,n$};
\end{tikzpicture}

        \caption{Cuttings of the word $\widetilde{\Gamma^k(a)}\Gamma^k(a)$.}
    \label{fig: cuttings}
\end{figure}
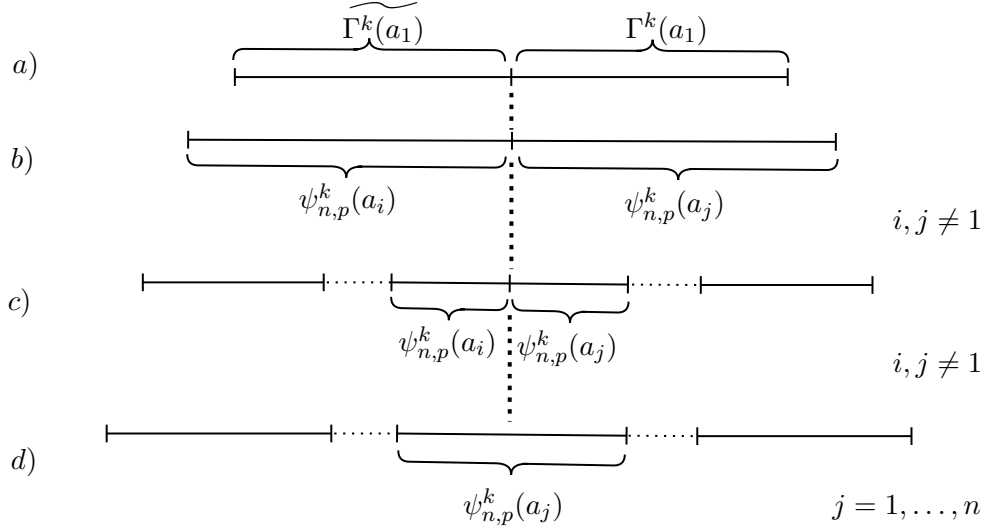 

The next result implies that the $\psi^k_{n,p}$-cutting in Figure \ref{fig: cuttings} (d) is impossible, that is, any level-$k$ inflation word cannot straddle between $\reallywidetilde{\Gamma^k(a_1)}$ and $\Gamma^k(a_1)$.
In what follows, we let $L_k$ be the length of $\Gamma^{k}(a_1)$.

\begin{lemma}\label{lem: no straddling}
Let $\psi_{n,p}$ be a random noble Pisa substitution over the alphabet $\mathcal{A}=\{a_i\mid 1\leqslant i\leqslant n\}$ for $\{n,p\}\subset\mathbb{N}\setminus\{1\}$.   
For any $1\leqslant i\leqslant n$
and any $w\in\rb[k]{a_i}$,
there are no non-empty prefix $v$ of $\Gamma^k(a_1)$ and non-empty suffix $u$ of $\reallywidetilde{\Gamma^k(a_1)}$ satisfying $w=uv$. 
\end{lemma}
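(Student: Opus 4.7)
The plan is to argue by induction on $k$, exploiting that the double word $\widetilde{\Gamma^k(\alpha^{}_1)}\,\Gamma^k(\alpha^{}_1)$ is itself a level-$1$ $\psi_{n,p}$-image of its level-$(k-1)$ counterpart. For the base case $k=1$, any non-empty suffix of $\widetilde{\Gamma(\alpha^{}_1)}=\alpha_1^p\alpha^{}_2$ ends in $\alpha^{}_2$ and any non-empty prefix of $\Gamma(\alpha^{}_1)=\alpha^{}_2\alpha_1^p$ begins with $\alpha^{}_2$, so a putative straddling $w=uv$ would contain the factor $\alpha^{}_2\alpha^{}_2$. This is impossible, since every element of $\psi_{n,p}(\alpha^{}_j)$ is either the letter $\alpha^{}_1$ or of the form $\alpha_1^{p-j'}\alpha^{}_{j+1}\alpha_1^{j'}$ and so carries at most one non-$\alpha^{}_1$ letter.

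For the inductive step, set $y=\Gamma^{k-1}(\alpha^{}_1)$ and write $\Gamma^k(\alpha^{}_1)=\Gamma(y)=u^{(1)}\cdots u^{(L_{k-1})}$ with each $u^{(s)}=\Gamma(y_{s-1}y_s)\in\psi_{n,p}(y_s)$. Since reversing $\alpha_1^{p-j}\alpha^{}_{\cdot+1}\alpha_1^{j}$ yields $\alpha_1^{j}\alpha^{}_{\cdot+1}\alpha_1^{p-j}$, each image set $\psi_{n,p}(\alpha)$ is closed under reflection, so $\widetilde{u^{(s)}}\in\psi_{n,p}(y_s)$ as well. Consequently,
\[
\widetilde{\Gamma^k(\alpha^{}_1)}\,\Gamma^k(\alpha^{}_1)\;\in\;\psi_{n,p}\bigl(\widetilde{\Gamma^{k-1}(\alpha^{}_1)}\,\Gamma^{k-1}(\alpha^{}_1)\bigr),
\]
equipping the level-$k$ double word with a canonical level-$1$ decomposition over $\widetilde{y}\,y$. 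Now assume $w=uv\in\psi_{n,p}^k(\alpha^{}_i)$ is straddling. Writing $\psi_{n,p}^k=\psi_{n,p}\circ\psi_{n,p}^{k-1}$ gives a second level-$1$ decomposition $w=w^{(1)}\cdots w^{(m)}$ with $w^{(t)}\in\psi_{n,p}(\beta_t)$ and $\beta:=\beta_1\cdots\beta_m\in\psi_{n,p}^{k-1}(\alpha^{}_i)$. The crux is to show that these two cuttings coincide on the positions occupied by $w$. Granted this alignment, $\beta$ arises as a sub-word of $\widetilde{y}\,y$ whose range straddles the central junction, so $\beta=u'v'$ with $u'$ a non-empty suffix of $\widetilde{\Gamma^{k-1}(\alpha^{}_1)}$ and $v'$ a non-empty prefix of $\Gamma^{k-1}(\alpha^{}_1)$; the inductive hypothesis applied to $\beta\in\psi_{n,p}^{k-1}(\alpha^{}_i)$ then delivers the contradiction.

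The main obstacle is establishing the alignment. Each non-singleton image $\psi_{n,p}(\alpha)$ has fixed length $p+1$ with its unique non-$\alpha^{}_1$ letter at a prescribed slot, so a piece $w^{(t)}$ that straddled two consecutive pieces of the double word's decomposition would need to reconcile its single distinguished letter against the two distinguished letters inherited from those pieces. A case analysis in the spirit of Proposition~\ref{pro: not-pre-suf}, leveraging the $\alpha_1^p$-padding around every distinguished letter (which is sharp because $p\geq 2$ and $n\geq 2$), rules out all non-aligned configurations, as well as possible misalignments at the left and right endpoints of $w$; the non-emptiness of $u'$ and $v'$ then follows from that of $u$ and $v$.
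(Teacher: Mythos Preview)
Your base case $k=1$ is correct and indeed coincides with the paper's Case~2 argument. However, your inductive step is a genuinely different route from the paper's, and it carries a real gap.

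The paper does \emph{not} argue by induction on $k$. Instead, it fixes $k$ and directly examines the letters of $\Gamma^k(\alpha^{}_1)$ immediately to the right of the junction (and, by reflection, to the left). Two cases arise according to whether the first letter of $\Gamma^k(\alpha^{}_1)$ is $\alpha^{}_1$ or not. If it is some $\alpha^{}_i\neq\alpha^{}_1$, the junction reads $\alpha^{}_i\alpha^{}_i$, which no level-$1$ inflation word contains. If it is $\alpha^{}_1$, then $(\Gamma^{k-1}(\alpha^{}_1))_1=\alpha^{}_n$, forcing $(\Gamma^{k-1}(\alpha^{}_1))_{[1,2]}=\alpha^{}_n\alpha^{}_1$, whence the first $p+2$ letters of $\Gamma^k(\alpha^{}_1)$ are $\alpha_1^{p+1}\alpha^{}_2$; any straddling level-$1$ inflation word would then be $\alpha_1^{p+1}$, which lies in no $\psi_{n,p}(\alpha^{}_j)$. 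This handles $w\in\psi_{n,p}(\alpha^{}_i)$; the paper then states that the level-$k$ case is proved similarly and refers to \cite{Escolano}. The key point is that the paper never needs uniqueness of cuttings.

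Your inductive step, by contrast, hinges entirely on the ``alignment'' claim: that the level-$1$ decomposition of $w$ inherited from $\beta\in\psi_{n,p}^{k-1}(\alpha^{}_i)$ must agree, cut for cut, with the canonical level-$1$ decomposition of $\widetilde{\Gamma^k(\alpha^{}_1)}\,\Gamma^k(\alpha^{}_1)$ over $\widetilde{y}\,y$. You acknowledge this is the main obstacle and gesture at a case analysis, but you do not carry it out --- and it is precisely here that the argument is in danger. Level-$1$ cuttings for $\psi_{n,p}$ are \emph{not} unique: the paper's own Proposition~\ref{prop: identical set cond noble Pisa} exhibits $\alpha_1^{p}\alpha^{}_2\alpha^{}_1\in\psi_{n,p}(\alpha^{}_1\alpha^{}_n)\cap\psi_{n,p}(\alpha^{}_n\alpha^{}_1)$, so two legitimate decompositions of the same word can disagree on where the cuts fall. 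Nothing you have written rules out that the $\beta$-decomposition of $w$ is offset from the $\widetilde{y}\,y$-decomposition at the junction; if it is, you cannot read off a non-empty suffix $u'$ of $\widetilde{y}$ and non-empty prefix $v'$ of $y$ from $\beta$, and the induction does not close. The ``$\alpha_1^p$-padding'' heuristic is not sharp enough on its own, because the ambiguity above arises exactly from sliding a block of $\alpha^{}_1$'s across a length-$1$ image $\psi_{n,p}(\alpha^{}_n)=\{\alpha^{}_1\}$. Until the alignment is actually established (or circumvented), the inductive step is incomplete.
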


\begin{proof}[Sketch of Proof]
We only prove the statement with $w\in\srb{a_i}$ by considering two cases on the first letter of $\left(\Gamma^k(a_1)\right)_1$. 
\vspace{2mm}

\noindent \emph{Case 1.} Assume $\left(\Gamma^k(a_1)\right)_1=\Gamma\left(\left(\Gamma^{k-1}(a_1)\right)_1\right)_1=a_1.$ Then, by the definition of the $\Gamma$-construction, we have that   $\left(\Gamma^{k-1}(a_1)\right)_1=a_n.$ 
Now, the fact that $\left(\Gamma^{k-1}(a_1)\right)_1=a_n$ forces $k>1$, since having $k=1$ leads to the contradiction $\left(\Gamma^{k-1}(a_1)\right)_1=(a_1)_1=a_n.$ 
We know from the definition of the $\psi_{n,p}$ mapping that $\left(\Gamma^{k-1}(a_1)\right)_{[1,2]}=a_na_1.$ 
Applying $\Gamma$ yields
\begin{align*}
\Gamma\left(\left(\Gamma^{k-1}(a_1)\right)_{[1,2]}\right)
=
a_1
a_1^pa_2
=
\left(\Gamma^k(a_1)\right)_{[1,p+2]}.\numberthis \label{eq: right1}
\end{align*}

To the contrary, suppose that  
for some $1\leqslant i\leqslant n$ there exists a $w\in\srb{a_i},$ such that for some $1 \leqslant s,t\leqslant L_k$ it is the case that
$$
w=
\reallywidetilde{\Gamma^k(a_1)}_{\left[s, L_k\right]}
\Gamma^k(a_1)_{[1,t]}
,
$$
that is, suppose $u=\reallywidetilde{\Gamma^k(a_1)}_{\left[s, L_k\right]}$ and $v=\Gamma^k(a_1)_{[1,t]}$.

Note that if $i=n$, then $w\in\srb{a_n}=\{a_1\}$ and so it follows that $w=a_1$, contradicting the assumption that $u$ and $v$ must be both non-empty words.
Hence, we now assume that $i\neq n$. 
Since $w\in\srb{a_i}=\{a_1^{p-s}a_{i+1}a_1^s\mid 0\leqslant s\leqslant p\}$, we have that for some $0\leqslant s_D\leqslant p,$ 
\begin{equation}\label{eq: which}
w=a_1^{p-s_D}a_{i+1}a_1^{s_D},
\end{equation}
and that $|w|=p+1.$
However, from Eq.~\eqref{eq: right1}, we have
\begin{equation}
        \Big.
        \reallywidetilde{\left(\Gamma^k(a_1)\right)}_{\left[L_k-(p+1), L_k\right]}
        \Big|
        \left(\Gamma^k(a_1)\right)_{[1,p+2]}
        =
        \left.
        a_2a^p_1a_1
        \right|
        a_1a^p_1a_2.
\end{equation}
Since $u=\reallywidetilde{\Gamma^k(a_1)}_{\left[s, L_k\right]}$ and $v=\Gamma^k(a_1)_{[1,t]}$ must both be non-empty, it follows that $u=a_1^{p-t+1}$ and $v=a_1^t$ implying that $w=uv=a_1^{p+1}$ which contradicts Eq.~\eqref{eq: which}. 
\vspace{2mm}

\noindent \emph{Case 2.} Assume that  $\left(\Gamma^k(a_1)\right)_1=\Gamma\left(\left(\Gamma^{k-1}(a_1)\right)_1\right)_1=a_i\neq a_1.$ Then, we have that
\begin{equation*}
    \reallywidetilde{\Gamma^k(a_1)}_{
L_k}
\Gamma^k(a_1)_{1}=a_ia_i.
\end{equation*}

Suppose, to the contrary, that  
for some $1\leqslant i\leqslant n$, there exists a $w\in\srb{a_i},$ such that for some  $1\leqslant s,t\leqslant L_k$, we have 
$$
w=
\reallywidetilde{\Gamma^k(a_1)}_{\left[s, L_k\right]}
\Gamma^k(a_1)_{[1,t]}.
$$
Then this requires $a_ia_i$ to be a subword of $w$, which is a contradiction since there is no occurrence of $a_ia_i$ in any level-1 realisation of a single letter.
The statement of the lemma with $w\in\rb[k]{a_i}$ can be proven similarly; for details, we refer the reader to \cite{Escolano}. 
\end{proof}

\begin{theorem}\label{thm: recog words}
Let $\psi_{n,p}$ be a random noble Pisa substitution over the alphabet $\mathcal{A}=\{a_i\mid 1\leqslant i\leqslant n\}$ for $\{n,p\}\subset\mathbb{N}\setminus\{1\}$. 
Then $w^{(k)}:=\reallywidetilde{\Gamma^k(a_1)}
\Gamma^k(a_1) \in \psi^k_{n,p}(a_1 a_1)$ is a level-$k$ recognisable word.
\end{theorem}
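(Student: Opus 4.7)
The plan is to prove that $w^{(k)}$ admits exactly one level-$k$ inflation word decomposition, namely $\bigl([\reallywidetilde{\Gamma^k(\alpha^{  }_1)},\Gamma^k(\alpha^{  }_1)],\,\alpha^{  }_1\alpha^{  }_1\bigr)$, which is a valid decomposition since $\Gamma^k(\alpha^{  }_1),\,\reallywidetilde{\Gamma^k(\alpha^{  }_1)}\in\psi^k_{n,p}(\alpha^{  }_1)$. Since this root has length $2$, Definition~\ref{def: level k recog} requires proving uniqueness of both the $\psi^k_{n,p}$-cutting and the full root $v$. My strategy is to rule out the three alternative configurations in Figure~\ref{fig: cuttings}(b),(c),(d) by combining length comparisons with Proposition~\ref{pro: not-pre-suf} and Lemma~\ref{lem: no straddling}.

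Fix any level-$k$ inflation word decomposition $([u^{(1)},\ldots,u^{(|v|)}],v)$ of $w^{(k)}$. Proposition~\ref{pro: not-pre-suf}(1), together with $\Gamma^k(\alpha^{  }_1)\in\psi^k_{n,p}(\alpha^{  }_1)$, gives the length bound $|\psi^k_{n,p}(\alpha^{  }_i)|\leqslant L_k$ for all $i$, with equality only when $i=1$; in particular $|v|\geqslant 2$. The first key step is to show that one cutting position must be exactly at the midpoint $L_k$. Otherwise some piece $u^{(j)}$ straddles the midpoint. If $j\in\{1,|v|\}$, then $u^{(j)}$ is a suffix or prefix of an inflation word of length exceeding $L_k$, contradicting the length bound. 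If $1<j<|v|$, then $u^{(j)}$ is a full inflation word that decomposes as a non-empty suffix of $\reallywidetilde{\Gamma^k(\alpha^{  }_1)}$ followed by a non-empty prefix of $\Gamma^k(\alpha^{  }_1)$, which is exactly the configuration forbidden by Lemma~\ref{lem: no straddling}; this rules out Figure~\ref{fig: cuttings}(d).

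Granted a cut at position $L_k$, say $c_{j^\ast}=L_k$, the decomposition splits into a left decomposition of $\reallywidetilde{\Gamma^k(\alpha^{  }_1)}$ and a right decomposition of $\Gamma^k(\alpha^{  }_1)$. If $j^\ast\geqslant 2$, then $u^{(j^\ast)}$ is a middle piece (since the right half is non-empty) that is a proper suffix of $\reallywidetilde{\Gamma^k(\alpha^{  }_1)}$ and a full inflation word; Proposition~\ref{pro: not-pre-suf}(2) forces $v_{j^\ast}=\alpha^{  }_1$, which forces $|u^{(j^\ast)}|=L_k$, contradicting $j^\ast\geqslant 2$. This excludes Figure~\ref{fig: cuttings}(c). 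Therefore $j^\ast=1$, and $u^{(1)}=\reallywidetilde{\Gamma^k(\alpha^{  }_1)}$ as a suffix of $\psi^k_{n,p}(v_1)$ of length $L_k$ forces $v_1=\alpha^{  }_1$, which rules out Figure~\ref{fig: cuttings}(b). The symmetric analysis of the right half yields $v_2=\alpha^{  }_1$ and $|v|=2$, giving the desired uniqueness of both the cutting and the root. The main obstacle is producing the midpoint cut, which is exactly what Lemma~\ref{lem: no straddling} (proved via the delicate case analysis in its sketch) provides; once this is secured, the rest reduces to length-based bookkeeping driven by the two parts of Proposition~\ref{pro: not-pre-suf}.
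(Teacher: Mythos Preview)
Your argument is correct and follows essentially the same route as the paper's own proof: use the length bound from Proposition~\ref{pro: not-pre-suf}(1) to get $|v|\geqslant 2$, invoke Lemma~\ref{lem: no straddling} to force a cut at the midpoint, and then apply Proposition~\ref{pro: not-pre-suf}(2) on the suffix (resp.\ prefix) side to conclude $j^\ast=1$, $|v|=2$, and $v=\alpha^{ }_1\alpha^{ }_1$. Your treatment is slightly more explicit than the paper's in separating the straddling cases $j\in\{1,|v|\}$ from $1<j<|v|$, but the logic and the tools used are identical.
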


\begin{proof}

For $p>1$ and  $a_i\neq a_n$ one has $a_1a_1\prec a_1^pa_{i+1}\in\srb{a_i}$ and so $a_1a_1$ is legal.
It follows that  $w^{(k)}\in\rb[k]{a_1a_1}  
$ is legal, being an image of a legal word under $\psi^{k}_{n,p}$. Let us consider an arbitrary level-$k$ inflation word decomposition  $\left(v,\left[u^{(1)},\ldots,u^{(|v|)}\right]\right)$  of $w^{(k)}=\reallywidetilde{\Gamma^k(a_1)}\Gamma^k(a_1).$
Since $\Gamma^k(a_1) \in \psi^k_{n,p}(a_1)$, by Proposition \ref{pro: not-pre-suf} we see that $1\leqslant|u^{(1)}|\leqslant L_k< |w^{(k)}|=2L_k$.  
Thus, by Definition \ref{def: level-k IWD}, $|v|\neq1$, i.e., $w^{(k)}$ cannot be an inflation word. 

Now since $|v|>1$, Lemma \ref{lem: no straddling} guarantees that there exists a $1\leqslant j\leqslant |v|-1$ such that
\begin{align*}
    u^{(1)}\cdots u^{(j)}&=\reallywidetilde{\Gamma^k(a_1)}\\
    u^{(j+1)}\cdots u^{(|v|)}&=\Gamma^k(a_1) 
\end{align*}
with $u^{(j)}\in \psi^k(\{v_j\})$ by Definition \ref{def: level-k IWD}. 
Since $u^{(j)}$ is a suffix of $\reallywidetilde{\Gamma^k(a_1)}$, by Proposition~\ref{pro: not-pre-suf}, one has $j=1, v_1=a_1$ and $u^{(1)}=\reallywidetilde{\Gamma^k(a_1)}$. Similarly, since $u^{(2)}$ is a prefix of $\Gamma^k(a_1),$ by Proposition~\ref{pro: not-pre-suf}, $|v|=2, v_{2}=a_1$ and $u^{(2)}=\Gamma^k(a_1)$.
This proves that $\big(a_1 a_1,[\reallywidetilde{\Gamma^k(a_1)},\Gamma^k(a_1)]\big)$ is the unique 
level-$k$ decomposition of $w^{(k)}$, thus completing the proof. 
\end{proof}

\begin{remark}
It can be seen that the result above excludes the case when $p=1.$ This is because, when $p=1,$ level-$k$ recognisability is difficult to establish for words whose roots are $a_1a_n$ and $a_na_1.$ Simply consider the $\psi_{2,1}$  substitution and the level-$1$ inflation word $aba,$ whose root we cannot determine among $ab,ba,$ and $aa.$
To construct recognisable words, one needs a more specialised map than $\Gamma$, which requires one to look at letters which precede and follow a letter $a$ within a word $w$ to deduce the appropriate realisation for $a$; see \cite[Ex.~38]{Miro} for the construction for random Fibonacci $\psi_{2,1}$. \exend
\end{remark}

\subsection{Topological semi-mixing}

In Theorem \ref{thm: non-mixing}, we showed that the random noble Pisa substitution $\psi_{n,p}$ induces a dynamical system $(X_{n,p}, S)$ that is not topologically mixing, i.e.,  there exist legal words $t,w\in\mathcal{L}(X_{n,p})$ such that, for all $K\in \mathbb{N}$, there is at least one $k\geqslant K$ such that $tvw\notin\mathcal{L}(X_{n,p})$ for all $v\in \mathcal{L}^{k}(X_{n,p})$.
We now show that $(X_{n,p}, S)$ satisfies a property that is weaker than mixing called semi-mixing, which was first introduced and studied for certain types of random noble Pisa substitutions in \cite{Tadeo, Manibo}. Even though it is a weaker property, it is still preserved under topological conjugacy, and thus is a well-defined invariant; see \cite[Prop.~4.2]{Tadeo}. 

We say that a topological dynamical system $(M,f)$ is \emph{semi-mixing} with respect to a proper clopen subset $U\subset M$ if for every open set $V$ in $M$ there exists a natural number $N$ such that for every $m\geqslant N$, $f^m(V) \cap U \neq \varnothing$. Alternatively, we say that
$(M,f)$ is $U$-semi-mixing. We note here that if $(M,f)$ is topologically mixing, then $(M,f)$ is semi-mixing with respect to any clopen set $U$. 
For a subshift $(X,S)$ over a finite alphabet, semi-mixing is equivalent to the following combinatorial definition.

\begin{definition}\label{def: semi}
A subshift $X$ is $\mathcal{W}$-\emph{semi-mixing} if and only if there exists a length $\ell$ and a proper subset $\mathcal{W}\subsetneq\mathcal{L}^\ell(X)$ such that for any word $t \in \mathcal{L}(X),$ there exists a natural number $K$ such that for every $k \geqslant K$, there exists a word $v$ of length $k$ and a word $w \in \mathcal{W}$
such that $tvw \in \mathcal{L}(X)$. Here, we say that $X$ is semi-mixing with respect to $\mathcal{W}$.
\end{definition}

In \cite{Manibo}, the generalised Zeckendorf representation of natural numbers was used to show that some  well-known random noble Pisa substitutions (namely, the random Fibonacci substitution $\psi_{2,1}$, random tribonacci substitution $\psi_{3,1}$, and random metallic means substitutions $\psi_{2,p}$) are semi-mixing.  
We now show that every random noble Pisa substitution $\psi_{n,p}$ gives rise to a numeration system and utilise this numeration system to prove that all random noble Pisa substitutions are semi-mixing. Formally, we have the following result.

\begin{theorem}\label{thm: semi-mixing}
The random noble Pisa substitution $\psi_{n,p}$ induces a dynamical system $(X_{n,p},S)$ that is topologically semi-mixing with respect to $\mathcal{W}=\bigcup\limits_{i=1}^{n-1}\psi_{n,p}(a_i)\subsetneq \mathcal{L}^{p+1}(X_{n,p})$. \qed
\end{theorem}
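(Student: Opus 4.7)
The plan is to combine primitivity of $\psi_{n,p}$ with a $\beta$-numeration induced by the lengths $A_k := |\psi_{n,p}^k(\alpha_1)|$, which are well-defined by semi-compatibility. From the substitution matrix \eqref{eq: substitution matrix} the $A_k$ satisfy
\[
A_{k+n} \;=\; p\bigl(A_{k+n-1} + A_{k+n-2} + \cdots + A_{k+1}\bigr) + A_k,
\]
which is the recurrence coming from the minimal polynomial of $\lambda_{n,p}$. Because $\lambda_{n,p}$ is a simple Parry number with $d_{\lambda_{n,p}}(1) = pp\cdots p\,1$ of length $n$, as recorded in the remark after Proposition~\ref{prop: Pisot}, the greedy $\beta$-expansion supplies, for every sufficiently large integer $N$, a unique admissible representation $N = \sum_{i=0}^M c_i A_i$ with digits $c_i \in \{0,1,\ldots,p\}$. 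Admissibility corresponds exactly to legality at the base level: admissible digit strings inflate to legal factors of $X_{n,p}$ via $\psi_{n,p}$.

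Given a legal word $t$, I would use primitivity to find $M_0$ and a realisation $u \in \psi_{n,p}^{M_0}(\alpha_1)$ with $t \prec u$, and write $u = s\,t\,s'$. For each sufficiently large $k$, I would construct a legal word $t\,v\,w$ with $|v|=k$ and $w \in \mathcal{W}$ as follows. Take the greedy decomposition $k - |s'| - (p+1) = \sum_{i=0}^M c_i A_i$ and select an admissible base-level right-extension of $\alpha_1$ whose digit pattern corresponds to $(c_M,\ldots,c_0)$. Apply $\psi_{n,p}^i$ to the appropriate positions of this base word to produce $c_i$ realisations of $\psi_{n,p}^i(\alpha_1)$ for each $i$, and concatenate these with the tail $s'$ of $u$. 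Finally, append a realisation $w \in \psi_{n,p}(\alpha_1) \subset \mathcal{W}$. The resulting portion after $t$ has length $|s'| + \sum_i c_i A_i + (p+1) = k + (p+1)$, so $|v| = k$ and the last $p+1$ symbols form the desired $w \in \mathcal{W}$.

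The main obstacle is legality of the constructed word at all junctions. Admissibility of the $\beta$-expansion is precisely the translation of legality of the base-level pattern used to generate the inflation blocks, which is a standard property of simple Parry $\beta$-shifts realised by primitive substitutions. Junctions between successive inflation blocks inflate legal letter-pairs $\beta_j\beta_{j+1}$ from that base-level pattern and are therefore legal. The two delicate seams---between $s'$ and the first inflation block, and between the last inflation block and $w$---can be made legal by exploiting the $(p+1)$-fold freedom in each level-$1$ realisation given by \eqref{eq: RNP definition}: one chooses the shift parameter $j \in \{0,\ldots,p\}$ in $\alpha_1^{p-j}\alpha_{i+1}\alpha_1^j$ to match the required boundary letters on either side. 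Finitely many short residual gap lengths lying below the numeration threshold are absorbed by enlarging the constant $K$ in Definition~\ref{def: semi}, completing the verification that $(X_{n,p},S)$ is $\mathcal{W}$-semi-mixing.
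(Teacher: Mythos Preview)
Your overall strategy---embed $t$ in a level-$M_0$ inflation word of $\alpha_1$, then use the numeration by the lengths $A_k$ to manufacture a tail of the correct length---is exactly the idea the paper pursues. However, the legality argument in your proposal has a genuine gap that the paper resolves with two lemmas you do not have.

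The issue is the seam between $u = s\,t\,s' \in \psi_{n,p}^{M_0}(\alpha_1)$ and the block you build from the greedy digits $(c_M,\ldots,c_0)$. For large $k$ the top index $M$ of the expansion grows without bound, so you are trying to glue a fixed level-$M_0$ inflation word to a concatenation of inflation words of levels up to $M \gg M_0$. Your appeal to ``admissibility of the $\beta$-expansion'' only tells you that the inflation-block concatenation itself is a prefix of some high-level inflation word; it says nothing about why that high-level word can legally follow $u$. The paper's mechanism is different: Lemma~\ref{lem: prefix} shows that a word built from digits $\varepsilon_q\cdots\varepsilon_0$ followed by some $w\in\mathcal{W}$ is a prefix of an element of $\psi_{n,p}^{q+2}(\alpha_1)$ at the \emph{same} level $q+2$ as the word $JTY$ containing $t$, so that $JTY$ followed by this prefix sits inside $\psi_{n,p}^{q+2}(\alpha_1\alpha_1)$ and is legal because $\alpha_1\alpha_1$ is legal. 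When $m-|Y|$ needs more than $q+1$ digits, the paper does not simply take a longer greedy expansion; instead Lemma~\ref{lem: exp} says that whenever $\psi_{n,p}^d(\alpha_1)\,u\,w \subseteq \mathcal{L}(\psi_{n,p})$ one may replace $u$ by any element of $\psi_{n,p}(u)(\psi_{n,p}^0(\alpha_1))^{j}$ and still find $w'\in\mathcal{W}$ preserving legality. Iterating this $(s-q)$ times shifts the digit string one place at a time while keeping the level of the left factor fixed at $q+2$. Your proposal has no analogue of this step.

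Your fallback---``choose the shift parameter $j$ in $\alpha_1^{p-j}\alpha_{i+1}\alpha_1^{j}$ to match boundary letters''---does not establish legality: matching a single boundary letter is neither necessary nor sufficient for a two-sided extension to lie in $\mathcal{L}(\psi_{n,p})$, and in any case that freedom lives only at level $1$, not at the levels $0,\ldots,M$ where your blocks sit. (There is also an arithmetic slip: from $\sum_i c_i A_i = k - |s'| - (p+1)$ the portion after $t$ has length $k$, not $k+(p+1)$, so $|v| = k - (p+1)$ rather than $k$.) To repair the argument you need precisely the two ingredients the paper isolates: that the digit-block word followed by some $w\in\mathcal{W}$ is a prefix of a level-$(q+2)$ inflation word (Lemma~\ref{lem: prefix}), and an inductive device to raise the digit count without raising the level of the left factor (Lemma~\ref{lem: exp}).
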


We will need the following generalisation of the Zeckendorf and Ostrowski numerations, which is satisfied by the noble Pisa family; compare  \cite{DT,Manibo}. One can also view this as a generalisation of Brown’s criterion \cite{Brown} for complete sequences of integers.

\begin{proposition}\label{prop: suff-numeration}
Consider a sequence of non-negative integers $\{L_q\}_{q\geqslant 0}$ with $L_0=1$. 
If there exists a constant $C\in \mathbb{N}$ such that $L_q < L_{q+1} \leqslant CL_q$, then, for any natural number $N$, there exists a natural number $d>0$ and a sequence of natural numbers $0\leqslant\varepsilon_0, \ldots, \varepsilon_d \leqslant C-1$ such that 
\[
N=\sum_{q=0}^d \varepsilon_q L_q,
\]
where $\varepsilon_d \geqslant 1$. 
\qed
\end{proposition}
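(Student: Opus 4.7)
The plan is to establish the existence of such a representation by a greedy algorithm, combined with strong induction on $N$. The hypothesis $L_q < L_{q+1} \leqslant C L_q$ with $L_0 = 1$ is tailored precisely so that the greedy choice at each step produces a digit in the range $\{0, 1, \ldots, C-1\}$, with a strictly positive leading digit.

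First, given $N \in \mathbb{N}$, I would let $d = \max\{q \geqslant 0 : L_q \leqslant N\}$. This maximum exists because $L_0 = 1 \leqslant N$ (so the set is non-empty) and because the sequence $\{L_q\}$ is strictly increasing and therefore unbounded (so the set is finite). Set $\varepsilon_d = \lfloor N/L_d \rfloor$. The inequality $L_d \leqslant N$ gives $\varepsilon_d \geqslant 1$; the maximality of $d$ combined with the growth bound yields $N < L_{d+1} \leqslant C L_d$, and hence $\varepsilon_d \leqslant C-1$. Then define the remainder $N' := N - \varepsilon_d L_d$, which satisfies $0 \leqslant N' < L_d$.

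Next I would apply strong induction on $N$. The base case $N = 1$ forces $d = 0$, $\varepsilon_0 = 1$, and $N' = 0$, so the representation $N = 1 \cdot L_0$ works trivially. For the inductive step, if $N' = 0$, the single-term expansion $N = \varepsilon_d L_d$ already has the required form. Otherwise, since $N' < N$, the induction hypothesis supplies $d' \geqslant 0$ and digits $\varepsilon_0', \ldots, \varepsilon_{d'}' \in \{0, \ldots, C-1\}$ with $\varepsilon_{d'}' \geqslant 1$ such that $N' = \sum_{q=0}^{d'} \varepsilon_q' L_q$. The crucial observation is that $d' < d$: indeed, the inductive $d'$ satisfies $L_{d'} \leqslant N' < L_d$, so $d' < d$ by strict monotonicity of $\{L_q\}$. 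Padding with zero coefficients $\varepsilon_{d'+1} = \cdots = \varepsilon_{d-1} = 0$ and setting $\varepsilon_q = \varepsilon_q'$ for $0 \leqslant q \leqslant d'$ produces the desired decomposition of $N$ with leading digit $\varepsilon_d \geqslant 1$ and all digits in the admissible range.

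The argument is essentially routine once the greedy step is correctly set up, and the main delicate point is verifying $\varepsilon_d \leqslant C-1$; it is here that the upper bound $L_{q+1} \leqslant C L_q$ enters in an essential way. Without this bound, the greedy remainder $N'$ could be smaller than $L_d$ while still forcing a digit $\varepsilon_d \geqslant C$, breaking the digit constraint. I do not anticipate any significant further obstacle beyond bookkeeping the indices in the inductive step.
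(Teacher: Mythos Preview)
Your greedy-algorithm argument with strong induction is correct and is the standard way to establish such positional numeration results. Note that the paper does not actually supply a proof of this proposition: it is stated with a \qed\ and a pointer to the references \cite{DT,Manibo}, so there is no ``paper's own proof'' to compare against; your write-up would serve perfectly well as the omitted justification.

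One small remark: as literally stated, the proposition asks for $d>0$, yet your base case $N=1$ yields $d=0$ (and indeed no representation with $d\geqslant 1$ and $\varepsilon_d\geqslant 1$ can exist for $N=1$, since $L_1>1$). This is a harmless slip in the proposition's wording rather than a flaw in your argument; you might simply flag that the conclusion should read $d\geqslant 0$.
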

Note that Proposition \ref{prop: suff-numeration} only asserts the existence of a sequence $0\leqslant\varepsilon_0, \ldots, \varepsilon_d \leqslant C-1,$ so the uniqueness of the sequence is not crucial in any way.

Now, fix $n$ and $p$ and let $L_m$ be the length of any level-$m$ realisation of $\psi_{n,p}$ on $a_1$.
Note that $L_m$ does not depend on the chosen realisation because of compatibility. One can show that these lengths satisfy the linear recursion 
\[
L_{m}=pL_{m-1}+pL_{m-2}+\cdots+pL_{m-(n-1)}+L_{m-n},
\]
with initial conditions $L_m=(p+1)^m$ for $0\leqslant m\leqslant n-1$. 
The next result states that the sequence $\{L_m\}_{m \geqslant 0}$ of lengths of inflation words satisfies the conditions of Proposition~\ref{prop: suff-numeration}. 
\begin{proposition}\label{pro: L-numeration}
Let $\psi_{n,p}$ be a random noble Pisa substitution over $\mathcal{A}=\{a_i\mid 1\leqslant i\leqslant n\}.$
Then the sequence $\{L_m\}_{m \geqslant 0}$ satisfies $L_m < L_{m+1} \leqslant (p+1)L_m$.
\qed
\end{proposition}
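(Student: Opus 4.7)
The plan is to treat the two inequalities $L_m < L_{m+1}$ and $L_{m+1} \leqslant (p+1)L_m$ separately, using only the linear recursion and initial conditions that the statement already supplies. Both come down to straightforward algebraic manipulations once one separates three regimes: the initial regime $0 \leqslant m \leqslant n-2$, the junction $m = n-1$, and the recursive regime $m \geqslant n$.

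For the upper bound, the initial regime is immediate from the closed form $L_m = (p+1)^m$, which gives the identity $L_{m+1} = (p+1)^{m+1} = (p+1)L_m$. In the recursive regime $m \geqslant n$, I would expand both $L_{m+1}$ and $(p+1)L_m = pL_m + L_m$ through the recursion; all intermediate coefficients of $L_{m-1}, \ldots, L_{m-(n-2)}$ cancel and what remains is
\[
(p+1)L_m - L_{m+1} = (p-1)L_{m-(n-1)} + L_{m-n},
\]
which is non-negative since $p \geqslant 1$ and $L_i > 0$. The junction $m = n-1$ is handled by a direct geometric-sum computation on the initial conditions: summing yields $L_n = p\sum_{i=1}^{n-1}(p+1)^i + 1 = (p+1)^n - p$, which is plainly at most $(p+1)L_{n-1} = (p+1)^n$.

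For strict monotonicity I would proceed by induction on $m$. In the initial regime this is immediate from $(p+1)^{m+1} > (p+1)^m$, valid whenever $p \geqslant 1$. The junction $m = n-1$ uses the same explicit value, since $L_n - L_{n-1} = (p+1)^n - p - (p+1)^{n-1} = p\bigl((p+1)^{n-1} - 1\bigr) > 0$ for $n \geqslant 2$. In the inductive step for $m \geqslant n$, subtracting the recursions for $L_{m+1}$ and $L_m$ produces
\[
L_{m+1} - L_m = p\bigl(L_m - L_{m-(n-1)}\bigr) + \bigl(L_{m-(n-1)} - L_{m-n}\bigr),
\]
and the inductive hypothesis that $L_i < L_{i+1}$ for every smaller $i$ makes both summands on the right strictly positive (via telescoping across the intermediate indices).

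There is no serious obstacle here. The only point requiring care is book-keeping at the boundary $m = n-1$, where $L_m$ still comes from the initial condition while $L_{m+1} = L_n$ is the first index governed by the recursion; once that junction is handled explicitly via the formula $L_n = (p+1)^n - p$, both bounds fall out of routine manipulation of the linear recurrence.
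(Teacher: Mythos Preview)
Your argument is correct; both inequalities follow from the recursion and the initial values exactly as you outline, and the boundary case $m=n-1$ is handled cleanly by the explicit value $L_n=(p+1)^n-p$.

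The paper itself supplies no proof at all: the proposition is stated with an immediate \qed. If anything, the authors likely had in mind a one-line combinatorial argument rather than your purely algebraic one. Since $\psi_{n,p}$ is semi-compatible, $L_{m+1}$ is obtained from any level-$m$ realisation $w$ of $\alpha_1$ by replacing each $\alpha_n$ with a word of length $1$ and every other letter with a word of length $p+1$, so
\[
L_{m+1}=(p+1)\bigl(L_m-|w|_{\alpha_n}\bigr)+|w|_{\alpha_n}=(p+1)L_m-p\,|w|_{\alpha_n}.
\]
The upper bound is then immediate from $|w|_{\alpha_n}\geqslant 0$, and strict monotonicity amounts to $|w|_{\alpha_n}<L_m$, which holds because every level-$m$ realisation of $\alpha_1$ contains at least one letter different from $\alpha_n$ (indeed it always contains $\alpha_1$). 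Your recursion-based derivation reaches the same conclusions with slightly more bookkeeping but no appeal to the substitution's combinatorics; either route is perfectly adequate here.
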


Proposition \ref{pro: L-numeration} is easily verified from the linear recursion and initial conditions above so the proof is omitted. This result implies that given a natural number $N\in\mathbb{N}$, the set of $(n,p)$-representations of $N,$  $[N]_{n,p}=\{\varepsilon_d\cdots\varepsilon_0\mid N=\sum_{q=0}^d \varepsilon_q L_q\},$ is non-empty.

\begin{example}\label{eg: (2,2)-numeration}
Recall from Example~\ref{ex: np22} that for $\psi_{2,2}$ we have $L_0=1,$ $L_1=3,$ and $L_2=7.$ Now, $7=2(3)+1(1)=1(7)+0(3)+0(1);$ thus $\{21,100\}\subseteq [7]_{2,2}$.
\exend
\end{example}

The next result follows immediately from definitions.

\begin{lemma}\label{lem: length from set concat}
Given $
\varepsilon_d\cdots\varepsilon_0 \in [N]_{n,p}$, for any word 
$u\in
(\psi_{n,p}^d(a_1))^{\varepsilon_d}
\cdots
(\psi_{n,p}^0(a_1)^{\varepsilon_0},
$
the length of $u$ is $|u|=N$. \qed
\end{lemma}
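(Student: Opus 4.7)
The plan is to reduce the statement to two already-established facts: semi-compatibility of $\psi_{n,p}$ (so that all level-$q$ realisations of $\alpha^{}_1$ have the same length), and the definition of the $(n,p)$-representation (so that the $\varepsilon_q$ weighted sum of these lengths equals $E$). The length of a concatenation is additive, so once these pieces are in place the computation is immediate.

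First I would recall that $\psi_{n,p}$ is semi-compatible, which was noted right after Eq.~\eqref{eq: RNP definition}. Consequently $\psi_{n,p}^q$ is also semi-compatible for every $q\geqslant 0$, so every word in $\psi_{n,p}^q(\alpha^{}_1)$ has the same Abelianisation and in particular the same length. Since $\Gamma^q(\alpha^{}_1)\in \psi_{n,p}^q(\alpha^{}_1)$, that common length is exactly $L_q=|\Gamma^q(\alpha^{}_1)|$.

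Next I would write any $u$ as claimed. By hypothesis
\[
u \;=\; u^{(d)}_1 \cdots u^{(d)}_{\varepsilon_d}\, u^{(d-1)}_1 \cdots u^{(d-1)}_{\varepsilon_{d-1}} \cdots\, u^{(0)}_1\cdots u^{(0)}_{\varepsilon_0},
\]
where each $u^{(q)}_j \in \psi_{n,p}^q(\alpha^{}_1)$. Since concatenation adds lengths, and by the previous paragraph $|u^{(q)}_j|=L_q$ for all admissible $j$,
\[
|u| \;=\; \sum_{q=0}^{d}\, \sum_{j=1}^{\varepsilon_q} |u^{(q)}_j| \;=\; \sum_{q=0}^{d}\varepsilon_q L_q.
\]
Finally, because $\varepsilon_d\cdots\varepsilon_0 \in [E]_{n,p}$, the defining property of the $(n,p)$-representation gives $E=\sum_{q=0}^{d}\varepsilon_q L_q$, and hence $|u|=E$.

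There is no real obstacle here; the only subtlety worth flagging is that one needs semi-compatibility of the \emph{iterates} $\psi_{n,p}^q$, not just of $\psi_{n,p}$ itself. This however is automatic: if $M_{\psi_{n,p}}$ governs the Abelianisation, then $(M_{\psi_{n,p}})^q$ governs that of $\psi_{n,p}^q$, so the Abelianisation (and therefore the length) of any element of $\psi_{n,p}^q(\alpha^{}_1)$ depends only on $\alpha^{}_1$ and $q$.
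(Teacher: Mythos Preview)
Your proof is correct and matches the paper's approach: the paper states this lemma with a \qed and the remark that it ``follows immediately from definitions,'' and what you have written is precisely the unpacking of those definitions (semi-compatibility gives $|u^{(q)}_j|=L_q$, additivity of length, and the defining relation $E=\sum_q \varepsilon_q L_q$). There is nothing to add.
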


We now note an important and useful property of this numeration---the \emph{digit-retention property}, which means that if a natural number $m > L_q$ for some $q
\in\mathbb{N}_0$, then there exists $s\in\mathbb{N}$ such that $s\geqslant q$ and $a_s\cdots a_0\in[m]_{n,p}$.
This effectively states that if $x\geqslant y$, $x$ has a representation that has at least as many digits as $y$ with respect to the numeration.
The next result could be easily gleaned from the definitions.

\begin{lemma}\label{lem: exist-q}
Given fixed $(n,p)$ and $t\in\mathcal{L}(\psi_{n,p})$, there exist a natural number $q\in\mathbb{N}_0$ and a pair of words $\{h,y\}\subseteq\mathcal{A}^*$ such that
$hty\in\psi_{n,p}^{q+2}(a_1)$. \qed
\end{lemma}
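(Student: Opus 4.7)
The plan is to combine legality of $t$ with primitivity of $\psi_{n,p}$ in order to embed $t$ inside some realisation of $\psi_{n,p}^{q+2}(\alpha^{  }_1)$. Since $t\in\mathcal{L}(\psi_{n,p})$, by definition there exist $k\in\mathbb{N}$, a letter $\alpha\in\mathcal{A}$, and a realisation $u\in\psi_{n,p}^k(\alpha)$ with $t\prec u$. Write $u=h_0\, t\, y_0$ for some $h_0,y_0\in\mathcal{A}^*$; this already gives the desired decomposition, but only within a realisation of $\psi_{n,p}^k(\alpha)$ rather than of $\psi_{n,p}^{q+2}(\alpha^{  }_1)$. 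The task therefore reduces to promoting the seed letter $\alpha$ to $\alpha^{  }_1$.

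This is where primitivity enters. By Proposition~\ref{prop: Pisot}, $\psi_{n,p}$ is primitive, so there exists $m\in\mathbb{N}$ such that every entry of $M_{\psi_{n,p}}^m$ is positive; in particular $(M_{\psi_{n,p}}^m)_{\alpha,\alpha^{  }_1}\geqslant 1$. Combined with semi-compatibility, this forces every realisation $w\in\psi_{n,p}^m(\alpha^{  }_1)$ to contain at least one occurrence of $\alpha$. Since positivity of $M_{\psi_{n,p}}^m$ is inherited by $M_{\psi_{n,p}}^{m'}$ for all $m'\geqslant m$, I may enlarge $m$ at will so that $q:=m+k-2\geqslant 0$.

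Now fix any $w\in\psi_{n,p}^m(\alpha^{  }_1)$ and a position $j$ at which $w_j=\alpha$. The recursive description of random substitutions together with Eq.~\eqref{eq: image random subs} yields $\psi_{n,p}^{m+k}(\alpha^{  }_1)=\psi_{n,p}^k\bigl(\psi_{n,p}^m(\alpha^{  }_1)\bigr)$, and an element of this set is produced from $w$ by independently choosing a realisation in $\psi_{n,p}^k(w_i)$ at each site $i$. I choose $u=h_0 t y_0$ at the $j$-th site and arbitrary realisations at all other sites, yielding a word $z\in\psi_{n,p}^{m+k}(\alpha^{  }_1)=\psi_{n,p}^{q+2}(\alpha^{  }_1)$ in which $t$ occurs as a contiguous subword. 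Splitting $z$ around this occurrence writes $z=hty$ with $h,y\in\mathcal{A}^*$, as required.

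The only real subtlety is the low-level corner $m+k<2$, which is absorbed by the freedom to enlarge $m$; every other step is immediate from primitivity, semi-compatibility, and the definition of iterated random substitution images. No obstruction of genuine substance is expected here, and the lemma amounts to a standard ``primitive seeding'' argument adapted to the random setting.
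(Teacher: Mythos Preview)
Your argument is correct and is exactly the standard ``primitive seeding'' argument the paper is gesturing at: the paper gives no proof at all, merely remarking that the lemma ``could be easily gleaned from the definitions'' and closing with \qed. Your use of legality to place $t$ inside some $\psi_{n,p}^k(\alpha)$, then semi-compatibility plus primitivity to ensure $\alpha$ occurs in every realisation of $\psi_{n,p}^m(\alpha_1)$, and finally the factorisation $\psi_{n,p}^{m+k}=\psi_{n,p}^k\circ\psi_{n,p}^m$ to splice $u$ into a realisation of $\psi_{n,p}^{q+2}(\alpha_1)$, is precisely the intended unpacking.
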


Lemma \ref{lem: exp} and Lemma \ref{lem: prefix} are simply generalisations of ideas first proposed in \cite{Manibo} regarding random noble means substitutions. These lemmas first appeared and were proven as Lemma 4.5 and Lemma 4.6, respectively, in \cite{Escolano}. Recall that $\mathcal{W}=\bigcup\limits_{i=1}^{n-1}\psi_{n,p}(a_i)$.

\begin{lemma}\label{lem: exp}
Let  $u\in\mathcal{A}^*,w\in\mathcal{W} $ and $d\in\mathbb{N}_0$ such that
$\rb[d]{a_1}uw\subseteq\mathcal{L}(\psi_{n,p})$. For any $0\leqslant j\leqslant p$, there exists $w'\in\mathcal{W}$ such that
$
\rb[d]{a_1}\srb{u}\left(\rb[0]{a_1}\right)^jw'\subseteq\mathcal{L}(\psi_{n,p})$. \qed
\end{lemma}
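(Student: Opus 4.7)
The plan is to apply $\psi_{n,p}$ once to the hypothesis, extract a suitable right-factor using the first-step structure of $\rb[d+1]{\alpha^{  }_1}$, and then pin down a specific realisation of $w$ whose initial segment has the desired form $\alpha_1^{j}w'$.

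First, since legality is preserved by $\psi_{n,p}$, applying the substitution to every word in $\rb[d]{\alpha^{  }_1}uw$ yields $\rb[d+1]{\alpha^{  }_1}\srb{u}\srb{w}\subseteq\mathcal{L}(\psi_{n,p})$. The identity $\psi_{n,p}^{d+1}(\alpha^{  }_1)=\psi_{n,p}^{d}(\psi_{n,p}(\alpha^{  }_1))$ gives the decomposition
\[
\rb[d+1]{\alpha^{  }_1}\;=\;\bigcup_{k'=0}^{p}\bigl(\rb[d]{\alpha^{  }_1}\bigr)^{p-k'}\,\rb[d]{\alpha^{  }_2}\,\bigl(\rb[d]{\alpha^{  }_1}\bigr)^{k'},
\]
whose $k'=1$ term shows $\bigl(\rb[d]{\alpha^{  }_1}\bigr)^{p-1}\rb[d]{\alpha^{  }_2}\rb[d]{\alpha^{  }_1}\subseteq\rb[d+1]{\alpha^{  }_1}$. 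Taking the right-factor beginning at the final $\rb[d]{\alpha^{  }_1}$ in $\bigl(\rb[d]{\alpha^{  }_1}\bigr)^{p-1}\rb[d]{\alpha^{  }_2}\rb[d]{\alpha^{  }_1}\srb{u}\srb{w}\subseteq\mathcal{L}(\psi_{n,p})$ and invoking factor-closedness of the language yields
\[
\rb[d]{\alpha^{  }_1}\srb{u}\srb{w}\;\subseteq\;\mathcal{L}(\psi_{n,p}).
\]

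It now suffices to exhibit a specific $z\in\srb{w}$ whose length-$(j+p+1)$ prefix has the form $\alpha_1^{j}w'$ with $w'\in\mathcal{W}$, since factor-closedness applied to each legal word in $\rb[d]{\alpha^{  }_1}\srb{u}\,z$ will then deliver $\rb[d]{\alpha^{  }_1}\srb{u}(\rb[0]{\alpha^{  }_1})^{j}w'\subseteq\mathcal{L}(\psi_{n,p})$. Writing $w=\alpha_1^{p-k}\alpha^{  }_{i+1}\alpha_1^{k}$ with $1\leqslant i\leqslant n-1$ and $0\leqslant k\leqslant p$, I split into two cases. In the generic case, where the first factor of $\srb{w}$ has length $p+1$ (i.e.\ every case except $k=p$ with $i=n-1$), select this first factor as $\alpha_1^{j}\beta\alpha_1^{p-j}$ with $\beta\in\{\alpha^{  }_2,\alpha^{  }_{i+2}\}$ chosen according to whether the first factor of $w$ is $\alpha^{  }_1$ or $\alpha^{  }_{i+1}$, and choose each subsequent factor of $\srb{w}$ to begin with $\alpha_1^{p}$. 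The length-$(j+p+1)$ prefix of the resulting $z$ is then $\alpha_1^{j}\beta\alpha_1^{p}=\alpha_1^{j}w'$ with $w':=\beta\alpha_1^{p}\in\mathcal{W}$. In the remaining case $k=p$ with $i=n-1$, where $\srb{w}=\{\alpha^{  }_1\}\bigl(\srb{\alpha^{  }_1}\bigr)^{p}$, take the second factor as $\alpha_1^{p-1}\alpha^{  }_2\alpha^{  }_1\in\srb{\alpha^{  }_1}$ and each later factor to begin with $\alpha_1^{p}$; the length-$(j+p+1)$ prefix then reads $\alpha_1^{p}\alpha^{  }_2\alpha_1^{j}=\alpha_1^{j}\bigl(\alpha_1^{p-j}\alpha^{  }_2\alpha_1^{j}\bigr)$, giving $w':=\alpha_1^{p-j}\alpha^{  }_2\alpha_1^{j}\in\srb{\alpha^{  }_1}\subseteq\mathcal{W}$.

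I expect the main obstacle to be the case analysis needed to locate $w'$ inside the length-$(j+p+1)$ prefix of $z$. The exceptional case $w=\alpha^{  }_n\alpha_1^{p}$ is awkward because the first factor of $\srb{w}$ collapses to a singleton, so the non-$\alpha^{  }_1$ letter of the intended $w'$ cannot be placed in the first factor and one must borrow letters from the second and possibly third factors. A similar issue arises when $k=p-1$ and $i=n-1$, where the second factor of $\srb{w}$ is $\srb{\alpha^{  }_n}=\{\alpha^{  }_1\}$; in each such configuration the freedom in choosing the remaining factors nonetheless suffices to produce a prefix of the required shape, with careful bookkeeping of positions.
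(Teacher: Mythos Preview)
Your argument is correct. The paper does not supply its own proof of this lemma (it is stated with a \qed and attributed to the thesis \cite{Escolano}), so there is nothing to compare against directly; your approach---apply $\psi_{n,p}$ once, use the realisation $\alpha_1^{p-1}\alpha^{  }_2\alpha^{  }_1\in\srb{\alpha^{  }_1}$ to peel off a terminal $\rb[d]{\alpha^{  }_1}$ from $\rb[d+1]{\alpha^{  }_1}$, and then locate $\alpha_1^{j}w'$ as a prefix of a chosen $z\in\srb{w}$---is the natural one and is presumably close to what the cited thesis does.

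Two small remarks on the write-up. First, in the generic case your phrase ``choose each subsequent factor to begin with $\alpha_1^{p}$'' is slightly imprecise: only the second factor is needed, and only its first $j\leqslant p$ letters must be $\alpha^{  }_1$, which is always achievable since the second letter of $w$ is $\alpha^{  }_1$ unless $k=p-1$, $i=n-1$. Second, the subcase $k=p-1$, $i=n-1$ (so $w=\alpha^{  }_1\alpha^{  }_n\alpha_1^{p-1}$) that you flag as awkward indeed works exactly as you suggest: the forced second factor $\alpha^{  }_1$ together with a third factor starting $\alpha_1^{p}$ (which exists whenever $j\geqslant 2$, hence $p\geqslant 2$, hence $|w|\geqslant 3$) supplies the required run of $\alpha^{  }_1$'s. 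With these two points made explicit the proof is complete.
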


\begin{lemma}\label{lem: prefix}
Given $(n,p)$ and $N\in\mathbb{N}$ such that $\varepsilon_q\cdots\varepsilon_0\in[N]_{n,p}$,  for any
$$u\in
(\psi_{n,p}^{q}(a_1))^{\varepsilon_q}
\cdots
(\psi_{n,p}^{0}(a_1))^{\varepsilon_0},
$$
there exist $w\in\mathcal{W}$ and $z\in \psi^{q+2}_{n,p}(a_1)$ such that $uw$ is a prefix of $z$.  \qed
\end{lemma}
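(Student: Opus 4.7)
The plan is to proceed by induction on $q$, at each step building the required level-$(q+2)$ realisation $z$ of $\alpha^{  }_1$ via a nested hierarchical decomposition whose block structure matches that of $u$.

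For the base case $q=0$, one has $u=\alpha_1^{\varepsilon_0}$ with $1\leqslant\varepsilon_0\leqslant p$, and the natural choice is $w=\alpha_1^{p-\varepsilon_0}\alpha^{  }_2\alpha_1^{\varepsilon_0}\in\srb{\alpha^{  }_1}\subseteq\mathcal{W}$, so that $uw=\alpha_1^p\alpha^{  }_2\alpha_1^{\varepsilon_0}$. This is a prefix of the element of $\rb[2]{\alpha^{  }_1}$ obtained from the top-level choice $\alpha_1^p\alpha^{  }_2\in\srb{\alpha^{  }_1}$ by inflating each of its first two $\alpha^{  }_1$'s again as $\alpha_1^p\alpha^{  }_2$.

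For the inductive step, we split $u=r_q^{(1)}\cdots r_q^{(\varepsilon_q)}u''$ with $r_q^{(i)}\in\rb[q]{\alpha^{  }_1}$ and $u''$ in the analogous product set up to level $q-1$. The construction exploits the inclusion $\rb[q+1]{\alpha_1^p\alpha^{  }_2}=(\rb[q+1]{\alpha^{  }_1})^p\,\rb[q+1]{\alpha^{  }_2}\subseteq\rb[q+2]{\alpha^{  }_1}$ arising from $\alpha_1^p\alpha^{  }_2\in\srb{\alpha^{  }_1}$: inside the first factor $R_1\in\rb[q+1]{\alpha^{  }_1}$, we decompose once more as $(\rb[q]{\alpha^{  }_1})^p\rb[q]{\alpha^{  }_2}$ and fix its first $\varepsilon_q$ level-$q$ sub-blocks to be $r_q^{(1)},\ldots,r_q^{(\varepsilon_q)}$. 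Iterating this procedure at each lower level $k=q-1,\ldots,0$---always decomposing the currently ``active'' sub-block (the one immediately after the already-fixed ones) with the canonical $j=0$ choice, and matching the first $\varepsilon_k$ level-$k$ sub-blocks to the corresponding components of $u$---we arrange that $u$ coincides with the prefix of $z$ of length $E=\sum_{i=0}^{q}\varepsilon_iL_i$. The active level-$1$ block in which $u$ terminates has the canonical form $\alpha_1^p\alpha^{  }_s$ for some $s\in\{2,\ldots,n\}$ determined by how many ``overflows'' $\varepsilon_k=p$ occurred on the descent; after its first $\varepsilon_0$ letters $\alpha_1^{\varepsilon_0}$ used up by $u$, the next $p-\varepsilon_0+1$ letters of $z$ are $\alpha_1^{p-\varepsilon_0}\alpha^{  }_s$, and the remaining $\varepsilon_0$ letters, taken from the start of the next canonically chosen level-$1$ block, can be arranged to equal $\alpha_1^{\varepsilon_0}$. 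Reading these $p+1$ letters off yields $w=\alpha_1^{p-\varepsilon_0}\alpha^{  }_s\alpha_1^{\varepsilon_0}\in\srb{\alpha^{  }_{s-1}}\subseteq\mathcal{W}$.

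The main difficulty lies in the bookkeeping around overflows: whenever $\varepsilon_k=p$, the currently active block transitions from a realisation of $\alpha^{  }_i$ to one of $\alpha^{  }_{i+1}$, and successive overflows can cascade all the way to $\alpha^{  }_n$, whose level-$1$ image $\srb{\alpha^{  }_n}=\{\alpha^{  }_1\}$ has length only one. In such configurations one has to look past the resulting short level-$1$ block into the enclosing sub-block hierarchy in order to collect $p+1$ consecutive characters of $z$ after $u$; the corresponding case analysis, which is particularly intricate when $n=2$, is the main technical step and is treated in detail in \cite[Lemma~4.6]{Escolano}.
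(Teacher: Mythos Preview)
The paper does not actually prove this lemma: it is stated with a \qed and the surrounding text says only that it ``first appeared and [was] proven as Lemma~4.6 in \cite{Escolano}''. Your proposal is therefore already more detailed than what the paper provides, and it points to the same external reference for the delicate case analysis. The hierarchical construction you outline---building $z\in\psi_{n,p}^{q+2}(\alpha^{  }_1)$ by repeatedly applying the $j=0$ realisation $\alpha_1^p\alpha^{  }_{i+1}$ at each level and matching the first $\varepsilon_k$ level-$k$ sub-blocks to the components of $u$---is the natural approach and your base case is correct. Your identification of the overflow phenomenon (when $\varepsilon_k=p$ the active letter advances from $\alpha^{  }_i$ to $\alpha^{  }_{i+1}$, possibly cascading to $\alpha^{  }_n$ whose image has length $1$) is exactly the technical crux, and deferring its resolution to \cite{Escolano} mirrors what the paper itself does.

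One small comment: your write-up is phrased as an ``induction on $q$'' but you never actually invoke an inductive hypothesis; what you really do is give a direct iterative construction descending through levels $q,q-1,\ldots,0$. That is fine, but it would read more honestly if you dropped the word ``induction'' and simply described it as a nested construction.
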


\begin{example}
We use these preceding Lemmas to illustrate the proof of Theorem \ref{thm: semi-mixing} via the example given in Example~\ref{ex: np22}. For the random noble Pisa substitution $\psi_{2,2}$,  we have the set $\mathcal{W}=\psi_{2,2}(a)=\{aab, aba,baa\}$.

Fix the $\psi_{2,2}$-legal word $t=bba$, which is a subword of $aaabbaa \in \psi_{2,2}^2(a)$. First we want to show that there exists a natural number $K$ and a legal word $v$ with $|v|=K$ such that $tvw$ is legal for some word $w \in \mathcal{W}$.

Lemma \ref{lem: exist-q} ensures the existence of $q \in \mathbb{N}_0$ and words $h,y$ such that $hty \in \psi^{q+2}_{2,2}(a)$. 
In our example, we can choose $q=0$ and $h=y=aa$ since $hty=aabbaaa\in \psi^2_{2,2}(a)$. We next describe the inductive argument to get words of increasing length between $t$ and words in the set $\mathcal{W}$. 
\vspace{2mm}

\noindent \emph{Base Case.}
To apply Lemma \ref{lem: prefix}, 
the only thing important in an $(n,p)$-representation $\varepsilon_q\cdots\varepsilon_0$  is
the value of $q$,  while $N$ can be any number that has an $(n,p)$-representation with $q+1$ digits. 
Note that in a $(2,2)$-representation in Example \ref{eg: (2,2)-numeration}, we have $L_0=1$, $L_1=3$, $L_2=7$ and so on.
Hence, for our example, since we already have $q=0$, we can arbitrarily assign $\varepsilon_0=2$ and so it follows that $N=\varepsilon_0 L_0 = 2(1)=2$ and $u=aa$ since $u\in\left(\psi^0_{2,2}(a)\right)^{\varepsilon_0}=\{a\}^{\varepsilon_0}$. 
Hence, by Lemma \ref{lem: prefix}, a pair of words $z$ and $w$ exists such that $uw$ is a prefix of $z \in \psi_{n,p}^2(a)$, and in our example we find that it is realised by $z=aabaaab\in \psi^{2}_{2,2}(a)$ with $w=baa$.

Now, we have that $htyz\in \psi^{q+2}_{n,p}(aa)$, which implies that $tyuw$ is also legal since $aa$ is legal. 
Thus, for $t=bba$, we let $v=yu$ and $K=|y|+|u|=4$.  
\vspace{2mm}

\noindent\emph{Inductive Step.}
For the inductive step, the goal is to find a word $v'$ with $|v'|=K+1=5$ such that $tv'w'$ is a legal word for some word $w'\in\mathcal{W}$.
To do this, we find a $u'$ such that $|u'|=|u|+1.$

Since $|u|$ is encoded in the numeration system with just $q+1$ digits, the digit-retention property of this numeration system guarantees that $[|u'|]_{n,p}$ has a representation that uses at least $q+1$ digits. 
This presents two possibilities:
\begin{itemize}
    \item $[|u'|]_{n,p}$ has a representation that uses precisely $q+1$ digits; or
    \item $[|u'|]_{n,p}$ has a representation that uses more than $q+1$ digits.
\end{itemize}

If  $[|u'|]_{n,p}$ has a representation $\varepsilon_q'\cdots\varepsilon_0'=[|u'|]_{n,p}$ that uses precisely $q+1$ digits, we can simply choose $u'\in
(\rb[q]{a_1})^{\varepsilon_q'}
\cdots
(\rb[0]{a_1})^{\varepsilon_0'},
$ and proceed with analogous steps when using $u$ above in the base case. 

However, if $|u'|$ has a representation that uses more than $q+1$ digits, it can be shown that $|u'|$ has a representation $\varepsilon_{q+1}''\cdots\varepsilon_0''$ that uses exactly $q+2$ digits. 
We can choose a word $x$ from the set $(\rb[q]{a_1})^{\varepsilon_{q+1}''}
\cdots(\rb[0]{a_1})^{\varepsilon_1''},$ so that
$[|x|]_{n,p}=\varepsilon_{q+1}''\cdots\varepsilon_1''$.  
Now, for any element $s\in\srb{x}$, one has  $[|s|]_{n,p}=\varepsilon_{q+1}''\cdots\varepsilon_1''0$.
Simply put,
applying the random substitution to a word induces a map to  the $(n,p)$-numeration of the length of its image by shifting all digits to the left and adding a zero at the first position.
The word $u'$ can then be selected from the set $\srb{x}\left(\rb[0]{a_1}\right)^{\varepsilon_0''}$ so that $ [|u'|]_{n,p}=\varepsilon_{q+1}''\cdots\varepsilon_1''\varepsilon_0''$, which is just the correct length we want. 

In our specific example,  we have $|u'|=|u|+1=3$ and so $[|u'|]_{2,2}=10,$ which is composed of $q+2=2$ numeration digits, namely: $\varepsilon_1''=1$ and $\varepsilon_0''=0$.
We then choose a word $x$ in $(\rb[0]{a})^{1}=\{a\}^1,$
and so we must have $x=a$ and $[|x|]_{2,2}=\varepsilon_1''=1$. 
Now we choose $u'$ from the set $\left(\psi_{2,2}(x)\right)^{\varepsilon_1''}
\left(\psi_{2,2}^0(a)\right)^{\varepsilon_0''}=\left(\psi_{2,2}(a)\right)\{\epsilon\}.$ In particular, we can pick $u'=aab$ without any prejudice against any of the elements of $\psi_{2,2}(x).$ 
We see that $u'=aab$ is of length $3$ as desired. 

By Lemma~\ref{lem: prefix}, there exists a $w\in\mathcal{W}$ and a $z'\in \psi^{q+2}_{2,2}(a)$ such that $xw'$ is a prefix of $z$. Here, $w^{\prime}=aba$ and $z'=aabaaab$.
Since $htyxw'$ is a prefix of $htyz'\in \psi^{q+2}_{2,2}(aa)$, the word $htyxw'$ is legal. 
By Lemma \ref{lem: exp}, since $htyxw'$ is legal, then there exists a $w''\in\mathcal{W}$ such that $htyu'w''$ is also legal, and so ultimately $tyu'w''$ is legal as well.
One checks that $w^{\prime\prime}=baa$ satisfies this. 
The goal for the inductive step is achieved in assigning the word $v'=yu'$  with $|v'|=|y|+|u'|=5=K+1$ such that $tv'w''$ is a legal word.
\exend
\end{example}

We now formalise the arguments above to prove Theorem~\ref{thm: semi-mixing}, i.e., that $(X_{n,p},S)$ is topologically semi-mixing with respect to $\mathcal{W}=\bigcup\limits_{i=1}^{n-1}\psi_{n,p}(\{a_i\})$. Note that $\mathcal{W}$ is a proper subset of $\mathcal{L}^{p+1}(\psi_{n,p})$ since
$a_2a_2$ is always legal, and hence extendable to a legal word $w$ of length $p+1$. This new word $w$ is clearly not in $\mathcal{W}$ since the letter $a_2$ appears at most once for any given $w^{\prime}\in\mathcal{W}$
by definition. 


\begin{proof}[Proof of Theorem \textnormal{\ref{thm: semi-mixing}}]
Following Definition~\ref{def: semi}, we need to show that, for every legal word 
$T$, there exists a natural number  $N\in\mathbb{N}$ such that, for any $m\geqslant N$, there exists a word $v\in\mathcal{A}^*$ with $|v|=m$ and a word $w\in\mathcal{W}$ such that $Tvw$ is legal. 

Given a legal word $T$, Lemma \ref{lem: exist-q} implies that we can choose the least $q\in\mathbb{N}_0$ and words  $J,Y \in \mathcal{A}^*$ such that $JTY\in \psi_{n ,p}^{q+2}(a_1).$ 
Now, consider the number $N=|Y|+L_q$, where as before $L_q=\left|\Gamma^q(a_1)\right|$.

For any natural number $m\geqslant N$, note that $m-|Y|\geqslant N-|Y|=L_q\geqslant 1.$
By the digit-retention property, for $s\geqslant q,$ we have that $a_s\cdots a_0\in[m-|Y|]_{n,p}.$
Consider  $R:=\sum\limits_{b=0}^{q}\varepsilon_{q-b}L_{q-b}$, where $\varepsilon_{q-b}:=a_{s-b}$.
Clearly, $\varepsilon_q\cdots \varepsilon_0\in[R]_{n,p}.$
 
By Lemma \ref{lem: prefix}, for any
$u\in
(\rb[q]{a_1})^{\varepsilon_q}
\cdots
(\rb[0]{a_1})^{\varepsilon_0},
$
there exists a $w\in\mathcal{W}$ and a $z\in \psi_{n,p}^{q+2}(a_1)$ such that $uw$ is a prefix of $z.$ Thus, there exists a $y\in\mathcal{A}^*,$ such that $uwy=z\in \psi_{n,p}^{q+2}(a_1).$ 
Since $a_1a_1$ is a legal word, one obtains
$
\psi_{n,p}^{q+2}(a_1)uwy
\subseteq
\psi_{n,p}^{q+2}(a_1)\psi_{n,p}^{q+2}(a_1)
=\psi^{q+2}_{n,p}(a_1^2)
\subseteq
\mathcal{L}(\psi_{n,p})
$. 
Therefore, for any $B\in \psi_{n,p}^{q+2}(a_1)$, the word $Buwy$ is legal and so $Buw$ is legal as well. 

We now consider two cases on the number of digits in an $(n,p)$-representation of the number $m-|Y|$: either exactly $q+1$ digits (i.e., $s=q$) or more than $q+1$ digits (i.e. $s>q$).
\vspace{2mm}

\noindent\emph{Case 1.} Suppose that $s=q$.
By Proposition \ref{lem: length from set concat}, since
$u\in
(\rb[q]{a_1})^{\varepsilon_q}
\cdots
(\rb[0]{a_1})^{\varepsilon_0}
$
and
$\varepsilon_q\cdots \varepsilon_0\in[R]_{n,p},$ we have that $|u|=R.$ 
By assumption, it follows that $a_s\cdots a_0=\varepsilon_q\cdots\varepsilon_0$ so that  
\begin{align*}
  R:=\sum\limits_{b=0}^{q}\varepsilon_{q-b}L_{q-b}
  =\sum\limits_{b=0}^{s}a_{s-b}L_{s-b}
  =m-|Y|.
\end{align*}
Therefore, $|u|=R=m-|Y|.$

Since we have that $JTY\in \psi_{n,p}^{q+2}(a_1)$ and $\psi^{q+2}_{n,p}(a_1)uw\subseteq\mathcal{L}(\psi_{n,p})$, it follows that the word $JTYuw\in\mathcal{L}(\psi_{n,p})$ is legal and the word $TYuw\in\mathcal{L}(\psi_{n,p})$ is also legal with $|Yu|=|Y|+|u|=m$ and $m \geqslant N$ is arbitrary, as required. 
\vspace{2mm}

\noindent \emph{Case 2.} Now, suppose that $s>q$.
Define $u^{(1)}:=\{u\}$ and for $1\leqslant x\leqslant s-q,$ let $j_x=a_{s-q-x},$ and $u^{(x+1)}=\psi_{n,p}(u^{(x)})\left(\rb[0]{a_1}\right)^{j_x}.$
Since $\psi^{q+2}_{n,p}(a_1)uw\subseteq\mathcal{L}(\psi_{n,p})$ and by Lemma \ref{lem: exp}, there exists $w^{(1)}\in\mathcal{W},$ such that
$$\psi^{q+2}_{n,p}(a_1)
\psi_{n,p}(u^{(1)})
\left(
\rb[0]{a_1}
\right)^{j_1}
w^{(1)}
\subseteq
\mathcal{L}(\psi_{n,p}).$$
It then follows that, if $2\leqslant s-q$, Lemma \ref{lem: exp} ensures that there exists $w^{(2)}\in\mathcal{W}$ such that

$$\psi^{q+2}_{n,p}(a_1)
\psi_{n,p}{(u^{(2)})}
\left(
\rb[0]{a_1}
\right)^{j_2}
w^{(2)}
\subseteq
\mathcal{L}(\psi_{n,p}).$$ 

More generally, for $1\leqslant r \leqslant s-q,$ it can be said that 
there exists a $w^{(r)}\in\mathcal{W}$ such that
$$\psi^{q+2}_{n,p}(a_1)
\psi_{n,p}{(u^{(r)})}
\left(
\rb[0]{a_1}
\right)^{j_r}
w^{(r)}
\subseteq
\mathcal{L}(\psi_{n,p}),$$ 
and so, for arbitrary
$
v^{\prime}\in
\psi_{n,p}{(u^{(r)})}
\left(
\rb[0]{a_1}
\right)^{j_r},
$
one has
$$
v^{\prime}\in
\left(
\psi_{n,p}^{q+r}(a_1)
\right)^{\varepsilon_q}
\cdots
\left(
\psi_{n,p}^{r}(a_1)
\right)^{\varepsilon_0}
\left(
\psi_{n,p}^{r-1}(a_1)
\right)^{j_1}
\cdots
\left(
\psi_{n,p}^{0}(a_1)
\right)^{j_r}.
$$
After the $(s-q)$-th iteration, we obtain 
\begin{equation}\label{eq: Esq}
\psi^{q+2}_{n,p}(a_1)
\psi_{n,p}{(u^{(s-q)})}
\left(
\rb[0]{a_1}
\right)^{j_{s-q}}
w^{(s-q)}
\subseteq
\mathcal{L}(\psi_{n,p}). 
\end{equation}
Hence, for arbitrary
$
v^{\prime}\in
\psi_{n,p}{(u^{(s-q)})}
\left(
\rb[0]{a_1}
\right)^{j_{s-q}},
$
we then have
\begin{equation*}
v^{\prime}\in
\left(
\psi_{n,p}^{s}(a_1)
\right)^{\varepsilon_q}
\cdots
\left(
\psi_{n,p}^{s-q}(a_1)
\right)^{\varepsilon_0}
\left(
\psi_{n,p}^{s-q-1}(a_1)
\right)^{j_1}
\cdots
\left(
\psi_{n,p}^{0}(a_1)
\right)^{j_{s-q}}.    
\end{equation*}
Since we let $\varepsilon_{q-b}:=a_{s-b}$ and $j_r=a_{s-q-r}$
for $1\leqslant r\leqslant s-q,$ it follows that
\begin{equation*}
v^{\prime}\in
\left(
\psi_{n,p}^{s}(a_1)
\right)^{a_s}
\cdots
\left(
\psi_{n,p}^{s-q}(a_1)
\right)^{a_{s-q}}
\left(
\psi_{n,p}^{s-q-1}(a_1)
\right)^{a_{s-q-1}}
\cdots
\left(
\psi_{n,p}^{0}(a_1)
\right)^{a_0},    
\end{equation*}
and as $a_s\cdots a_0\in[m-|Y|]_{n,p},$ we have $|v^{\prime}|=m-|Y|$ by Proposition \ref{lem: length from set concat}.

Now, since
$JTY\in \psi_{n,p}^{q+2}(a_1)$
and
$
v^{\prime}\in
\psi_{n,p}{(u^{(s-q)})}
\left(
\rb[0]{a_1}
\right)^{j_{s-q}},
$
we have that
$
JTYv^{\prime} w^{(s-q)}
$
is a legal word by Eq.~\eqref{eq: Esq} with $m=|Y|+|v^{\prime}|$ and $w^{(s-q)}\in\mathcal{W}$, proving the result for Case 2.

In both cases, we have shown that, given any legal word $T$, we can find a natural number $N=|Y|+L_q$ such that for arbitrary $m\geqslant N,$ the word $Tvw$ is legal for some word $v\in\mathcal{A}^+$ with $|v|=m$ and a word $w\in\mathcal{W}$, thus  completing the proof.
\end{proof}

\section{Entropy}
Let $X$ be a subshift over a finite alphabet defined via the language $\mathcal{L}$. The \emph{topological entropy} $h_{\text{top}}(X)$ of the symbolic dynamical system $(X,S)$ is given by 
\[
h_{\textnormal{top}}(X)=\lim_{m\to\infty} \log\frac{\mathfrak{p}(m)}{m},
\]
where $\mathfrak{p}\colon \mathbb{N}\to\mathbb{N}$ is the complexity function, i.e,  $\mathfrak{p}(m)=\#\mathcal{L}^{m}(X)$ is the cardinality of the set of all length-$m$ legal words; compare \cite{AKM,Lind,GR}.
For the random Fibonacci substitution $\psi_{1,1}$, Godr\`{e}che and Luck \cite{GL} provided a sketch suggesting that $h_{\textnormal{top}}(X_{1,1})=\sum_{m\geqslant 2} \frac{\log(m)}{\tau^{m+2}}$, where $\tau$ is the golden ratio, which was confirmed by Nilsson in \cite{Nilsson}. 
Explicit computations for $h_{\textnormal{top}}(X)$ for random noble means substitutions $\psi_{2,p}$ with $p\geqslant 1$ are found in \cite{Nilsson,BaakeMoll,Moll}. 
Recently, Gohlke showed  \cite{Gohlke} that one can get 
explicit bounds for $h_{\textnormal{top}}(X)$ for primitive semi-compatible random substitutions.

\begin{theorem}[{\cite[Thm.~15]{Gohlke}}]\label{thm: estimate top entropy}
Let $\psi$ be a primitive semi-compatible random substitution on a finite alphabet $\mathcal{A}=\left\{a^{ }_1,a^{ }_2,\ldots,a^{ }_n\right\}$. Then, one has 
\begin{equation}\label{eq: entropy bound gen}
\frac{1}{\lambda^m}\boldsymbol{q}^{T}_m\boldsymbol{R}\leqslant h_{\textnormal{top}}(X_{\psi}) \leqslant \frac{1}{\lambda^m-1}\boldsymbol{q}^{T}_m\boldsymbol{R},
\end{equation}
for all $m\geqslant 1$, where $\boldsymbol{R}$ is the normalised right Perron--Frobenius eigenvector of the substitution matrix $M_{\psi}$, $\lambda$ is the PF eigenvalue and $(\boldsymbol{q}_m)_i=\log\big( \#\big(\psi^{m}(a^{ }_i)\big)$. \qed
\end{theorem}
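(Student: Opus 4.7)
The plan is to derive both bounds from a sandwich inequality for the vector $\boldsymbol{q}_k$, coming from semi-compatibility, combined with the Perron--Frobenius spectral structure of $M_\psi$. I would start by noting that every realisation $w\in \psi^m(\alpha_i)$ shares the Abelianisation $\Phi(w) = M^m e_i$ by semi-compatibility, so applying $\psi^k$ to any such $w$ produces at most $\prod_\ell |\psi^k(\alpha_\ell)|^{(M^m)_{\ell i}}$ distinct tuples of letterwise realisations. Since $\psi^{k+m}(\alpha_i)$ is the union of the sets $\psi^k(w)$ over $w\in \psi^m(\alpha_i)$, taking logarithms should yield, up to asymptotically negligible corrections from collisions between tuples, the componentwise vector sandwich
\begin{equation*}
(M^m)^T\boldsymbol{q}_k \;\leqslant\; \boldsymbol{q}_{k+m} \;\leqslant\; \boldsymbol{q}_m + (M^m)^T\boldsymbol{q}_k,
\end{equation*}
the upper bound being immediate and the lower one requiring a separate argument controlling how often distinct tuples yield the same concatenated word.

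Next, I would pair these inequalities with the normalised right PF eigenvector via $\boldsymbol{R}^T (M^m)^T = \lambda^m \boldsymbol{R}^T$. Writing $a_k := \boldsymbol{R}^T\boldsymbol{q}_k$ produces
\begin{equation*}
\lambda^m a_k \;\leqslant\; a_{k+m} \;\leqslant\; a_m + \lambda^m a_k.
\end{equation*}
The left inequality forces $(a_k/\lambda^k)_k$ to be non-decreasing, while iterating the right one along the subsequence $k = Nm$ gives $a_{Nm}\leqslant a_m(\lambda^{Nm}-1)/(\lambda^m-1)$, so the same sequence is also bounded above by $a_m/(\lambda^m - 1)$ for every fixed $m$. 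Hence $C := \lim_{k\to\infty} a_k/\lambda^k$ exists and already satisfies $a_m/\lambda^m \leqslant C \leqslant a_m/(\lambda^m-1)$ for every $m\geqslant 1$.

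The remaining and substantive step is the identification $C = h_\mathrm{top}(X_\psi)$. For $h_\mathrm{top}\geqslant C$, I would use that $\psi^k(\alpha_i)$ consists of $e^{(\boldsymbol{q}_k)_i}$ pairwise distinct legal words all of the common length $\ell_k^i = \boldsymbol{1}^T M^k e_i$; primitivity and the PF spectral decomposition give $\ell_k^i \sim \lambda^k L_i$ and $(\boldsymbol{q}_k)_i/\ell_k^i \to C$ (since $\boldsymbol{q}_k$ aligns asymptotically with the left PF eigenvector of $M$), so $\log\mathfrak{p}(\ell_k^i)/\ell_k^i \to C$. For $h_\mathrm{top}\leqslant C$, I would bound $\mathfrak{p}(\ell)$ by decomposing any length-$\ell$ legal word into a concatenation of complete level-$k$ inflation words sitting over a short legal seed word of length $\sim \ell/\lambda^k$, together with boundary prefix/suffix pieces, and count via the semi-compatibility product bound and the sub-exponential number of seeds. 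The main obstacle is this upper estimate: one has to control the boundary contribution so that no spurious constant-multiplicative factor survives the $\ell\to\infty$ limit, and simultaneously establish the lower half of the $\boldsymbol{q}_k$-recursion despite the possibility of collisions in tuples of letterwise realisations. Once both are in place, the two bounds of the theorem follow immediately from the sandwich $a_m/\lambda^m \leqslant h_\mathrm{top} \leqslant a_m/(\lambda^m-1)$.
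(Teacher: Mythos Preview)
The paper does not prove this theorem: it is quoted verbatim from \cite{Gohlke} and closed with a \qed\ immediately after the statement, so there is no in-paper proof to compare your proposal against.

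That said, your outline is essentially the argument Gohlke gives. Two remarks. First, your worry about collisions in the lower bound $(M^m)^T\boldsymbol{q}_k\leqslant\boldsymbol{q}_{k+m}$ is unnecessary in the semi-compatible setting: for any fixed $w\in\psi^m(\alpha_i)$, every element of $\psi^k(w_j)$ has the common length $\boldsymbol{1}^T M^k e_{w_j}$, so distinct tuples of letterwise realisations occupy fixed positions in the concatenation and hence yield distinct words; this gives $|\psi^k(w)|=\prod_\ell|\psi^k(\alpha_\ell)|^{(M^m)_{\ell i}}$ exactly, and the lower sandwich inequality follows without any collision control. Second, your identification $C=h_{\textnormal{top}}(X_\psi)$ is indeed the substantive part; the lower bound $h_{\textnormal{top}}\geqslant C$ goes through cleanly by the pigeonhole observation that for each $k$ some letter $\alpha_i$ satisfies $(\boldsymbol{q}_k)_i/\ell_k^i\geqslant a_k/\lambda^k$, while the upper bound is most economically obtained by noting that every length-$\ell$ legal word is a subword of some realisation in $\psi^k(\alpha_i)$ once $\ell_k^i\geqslant\ell$, whence $\mathfrak{p}(\ell)\leqslant \ell_k^i\,|\psi^k(\alpha_i)|$, and then optimising over $k$. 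Your decomposition-into-inflation-words sketch also works but is heavier than needed.
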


\begin{definition}[{\cite[Def.~7]{Gohlke}}]
A random substitution $\psi$ satisfies the \emph{identical set condition} if 
\[
u,v\in\psi(a^{ }_i)\implies \psi^m(u)=\psi^m(v)
\] 
for all $a^{ }_i\in \mathcal{A}$ and $m\in\mathbb{N}$. On the other hand, it is said to satisfy the \emph{disjoint set condition} if
\[
u,v\in\psi(a^{ }_i),u\neq v\implies \psi^m(u)\cap\psi^m(v)=\varnothing.
\]
for all $a^{ }_i\in \mathcal{A}$ and $m\in\mathbb{N}$.
\end{definition}

\begin{proposition}[{\cite[Cor.~16]{Gohlke}}]
If $\psi$ satisfies the identical set (the disjoint set) condition, the lower bound (the upper) in Eq.~\eqref{eq: entropy bound gen} is indeed an equality.  In all other cases, the inequalities are strict. \qed
\end{proposition}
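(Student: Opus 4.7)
The plan is to trace through the derivation of the bounds in Eq.~\eqref{eq: entropy bound gen} and identify where each inequality arises, then check whether the two set conditions force equality or leave a strict gap. The key identity to work with is
$$
\psi^{m+k}(\alpha_i) \;=\; \bigcup_{u\in\psi^{k}(\alpha_i)}\psi^{m}(u), \qquad \psi^{m}(u) \;=\; \prod_{j=1}^{|u|}\psi^{m}(u_j),
$$
where the concatenation on the right is understood as a product of sets of factors. Two potential sources of overcounting appear when estimating cardinalities: the union over $u$ may be non-disjoint, and the concatenated factor sets may produce coincidental duplicates. Both are controlled precisely by the two set conditions.

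For the upper bound, I would start from the general inequality $|\psi^{m+k}(\alpha_i)| \leq \sum_{u\in\psi^{k}(\alpha_i)}\prod_{j=1}^{|u|}|\psi^{m}(u_j)|$ and then use the Perron--Frobenius asymptotics giving the frequency of $\alpha_j$ in any $u$ of length $\approx\lambda^k$, combined with the geometric summation arising from iterated application of $\psi^m$, to recover the $\frac{1}{\lambda^m-1}$ rate. Under the disjoint set condition, both the union is disjoint and each concatenation is unique at every iteration, so the displayed inequality is in fact an equality at every step; propagating equality through the geometric sum then yields $h_{\textnormal{top}}(X_\psi) = \frac{1}{\lambda^m-1}\boldsymbol{q}_m^T\boldsymbol{R}$. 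Conversely, if disjointness fails at some level, a collision occurs that strictly reduces the count; this defect is reproduced at every subsequent iteration of $\psi^m$, and by primitivity the offending letter recurs with positive frequency in all long inflation words, so the gap survives the limit.

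For the lower bound, the starting observation is $\mathfrak{p}(\lambda^m) \geq |\psi^{m}(\alpha_i)|$ for each $i$, suitably weighted by $R_i$. The identical set condition forces $\psi^{m}(u) = \psi^{m}(v)$ for any $u,v\in\psi(\alpha_i)$, and by induction on the level this collapses the iterated union entirely: $|\psi^{km}(\alpha_i)|$ grows only at the rate of a single ``outer'' branching per application of $\psi^m$, rather than at the full geometric aggregate that drives the upper bound. Dividing by $\lambda^{km}$ and sending $k\to\infty$ then pins the entropy exactly at $\frac{1}{\lambda^m}\boldsymbol{q}_m^T\boldsymbol{R}$. When the identical set condition fails, some pair $u,v\in\psi(\alpha_i)$ yields distinct images under $\psi^m$, so the union is strictly larger than any single $\psi^{m}(u)$; by primitivity this extra branching recurs with positive density at every subsequent level and strictly raises the entropy above the lower bound.

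The main obstacle is the strict-inequality clause: one must show that a single local defect — an unexpected collision when disjointness fails, or an unexpected separation when identical sets fails — propagates robustly and affects the asymptotic rate quantitatively, rather than being smoothed out by the limit. The argument leans on primitivity to guarantee that the offending letter appears with positive density in every sufficiently long inflation word, so the per-step deviation from the equality case does not vanish under normalisation by $\lambda^{km}$. A full write-up tracking $|\psi^{jm}(\alpha_i)|$ explicitly along the lines of \cite{Gohlke} then makes this quantitative and closes the proof.
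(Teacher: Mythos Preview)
The paper does not prove this statement: it is quoted verbatim as \cite[Cor.~16]{Gohlke} and marked with a \qed, i.e., it is imported from Gohlke's work without argument. There is therefore no proof in the present paper to compare your attempt against.

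That said, your sketch is broadly in the spirit of how such results are established, but it conflates the roles of the two conditions and is imprecise at key points. For the lower bound you write $\mathfrak{p}(\lambda^m)\geq |\psi^m(\alpha_i)|$, which is neither well-defined ($\lambda^m$ is irrational) nor the inequality actually driving the lower bound in \cite{Gohlke}; the lower bound there comes from counting distinct inflation words of a fixed type inside long legal words, not from a complexity estimate at a single length. More importantly, your explanation of why the identical set condition pins the entropy at the lower bound is backwards: under that condition the iterated cardinalities $|\psi^{km}(\alpha_i)|$ satisfy a \emph{multiplicative} recursion (since $\psi^m(u)=\psi^m(v)$ collapses the union to a single term whose cardinality is a product over letters), and it is this exact multiplicativity that makes $\frac{1}{\lambda^{km}}\log|\psi^{km}(\alpha_i)|$ constant in $k$, not a ``single outer branching'' picture. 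For the strictness clauses, invoking primitivity to say the defect recurs with positive density is the right intuition, but you have not shown that the resulting multiplicative correction to $|\psi^{km}(\alpha_i)|$ is bounded away from $1$ uniformly in $k$ after normalisation; this is where the actual work lies, and your sketch does not close that gap.
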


In what follows, we consider the shift $X_{n,p}$ generated by the random noble Pisa substitution $\psi_{n,p}$, where $n\geqslant 2,p\in \mathbb{N}$. We denote by $h_{\textnormal{top}}(X_{n,p})$ the associated topological entropy.

\begin{proposition}\label{prop: identical set cond noble Pisa}
For any $n,p$ and $m=1$, the random substitution $\psi_{n,p}$ satisfies neither the identical set condition nor the disjoint set condition. In particular, the entropy bounds in Eq.~\eqref{eq: entropy bound gen} for $m=1$ are strict. 
\end{proposition}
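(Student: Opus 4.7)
The plan is to exhibit, for every $n\geq 2$ and $p\in\mathbb{N}$, a concrete pair of distinct words $u,v\in\psi_{n,p}(\alpha^{ }_{n-1})$ whose images under $\psi_{n,p}$ simultaneously have a non-empty intersection (refuting the disjoint set condition) and differ as sets (refuting the identical set condition). Once both conditions are ruled out at level $m=1$, the strictness of the two bounds in \eqref{eq: entropy bound gen} follows directly from \cite[Cor.~16]{Gohlke}, as recalled in the excerpt above.

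The natural candidates are the two extremal realizations
\[
u=\alpha_1^{p}\alpha^{ }_{n},\qquad v=\alpha_1^{p-1}\alpha^{ }_{n}\alpha^{ }_{1},
\]
i.e., the $j=0$ and $j=1$ elements of $\psi_{n,p}(\alpha^{ }_{n-1})=\{\alpha_1^{p-j}\alpha^{ }_{n}\alpha_1^{j}\mid 0\leqslant j\leqslant p\}$. The structural feature I would exploit is that $\psi_{n,p}(\alpha^{ }_{n})=\{\alpha^{ }_{1}\}$ is a singleton: this forces a distinguished $\alpha^{ }_{1}$ at \emph{slightly different} sites in $\psi_{n,p}(u)$ and in $\psi_{n,p}(v)$, and the neighbouring $\psi_{n,p}(\alpha^{ }_{1})$-factors (which are rich in $\alpha^{ }_{1}$'s) can be chosen to absorb the shift.

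Concretely, I would write
\[
\psi_{n,p}(u)=\psi_{n,p}(\alpha^{ }_{1})^{p}\cdot\{\alpha^{ }_{1}\},\qquad
\psi_{n,p}(v)=\psi_{n,p}(\alpha^{ }_{1})^{p-1}\cdot\{\alpha^{ }_{1}\}\cdot\psi_{n,p}(\alpha^{ }_{1}),
\]
and use the elementary identity $(\alpha_1^{p}\alpha^{ }_{2})\,\alpha^{ }_{1}=\alpha^{ }_{1}\,(\alpha_1^{p-1}\alpha^{ }_{2}\alpha^{ }_{1})$, whose two factors on each side are precisely the $j=0$ and $j=1$ elements of $\psi_{n,p}(\alpha^{ }_{1})$. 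Plugging this in on the right-hand end produces, for any fixed choice of $w_{1},\ldots,w_{p-1}\in\psi_{n,p}(\alpha^{ }_{1})$, a single word $w_{1}\cdots w_{p-1}\alpha_1^{p}\alpha^{ }_{2}\alpha^{ }_{1}$ lying simultaneously in $\psi_{n,p}(u)$ and $\psi_{n,p}(v)$; hence the disjoint set condition fails at $m=1$. For the identical set condition I would simply read off the last letter: every element of $\psi_{n,p}(u)$ ends in the $\alpha^{ }_{1}$ forced by $\psi_{n,p}(\alpha^{ }_{n})$, while $\psi_{n,p}(v)$ contains realizations whose rightmost $\psi_{n,p}(\alpha^{ }_{1})$-factor is $\alpha_1^{p}\alpha^{ }_{2}$ and therefore end in $\alpha^{ }_{2}$. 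Thus $\psi_{n,p}(u)\neq\psi_{n,p}(v)$.

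I do not anticipate any real obstacle here; the argument is essentially a direct bookkeeping of positions, and the only substantive step is choosing the pair $(u,v)$ so that both failures become visible simultaneously. The singleton image of $\alpha^{ }_{n}$ is what makes this choice possible and neat. Once both conditions fail at $m=1$, \cite[Cor.~16]{Gohlke} yields that both inequalities in \eqref{eq: entropy bound gen} are strict for $m=1$, completing the proof.
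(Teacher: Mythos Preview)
Your proof is correct and follows essentially the same mechanism as the paper's: both exploit that $\psi_{n,p}(\alpha^{ }_{n})=\{\alpha^{ }_{1}\}$ is a singleton, so that adjacent level-$1$ realisations of $\alpha^{ }_{1}$ can absorb the one-letter shift of the $\alpha^{ }_{n}$ inside $\psi_{n,p}(\alpha^{ }_{n-1})$, and the forced terminal $\alpha^{ }_{1}$ distinguishes the image sets. The only cosmetic difference is that the paper uses two separate pairs---$(\alpha_1^{p}\alpha^{ }_{n},\,\alpha^{ }_{n}\alpha_1^{p})$ for the identical set condition and $(\alpha^{ }_{1}\alpha^{ }_{n}\alpha_1^{p-1},\,\alpha^{ }_{n}\alpha_1^{p})$ for the disjoint set condition---whereas you economise by handling both with the single pair $(\alpha_1^{p}\alpha^{ }_{n},\,\alpha_1^{p-1}\alpha^{ }_{n}\alpha^{ }_{1})$; your choice is the mirror image of the paper's (using the $j=0,1$ elements rather than $j=p-1,p$), and the arguments are otherwise identical.
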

\begin{proof}
Fix $n$ and $p$. To show the first claim, let $u=a^{p}_1a^{ }_n,v=a^{ }_{n}a^{ p}_1\in\psi_{n,p}(a^{ }_{n-1})$. Since $\psi_{n,p}(a^{ }_n)=a^{ }_1$, any image in $\psi_{n,p}(a^{p}_1a^{ }_n)$ must end with $a^{ }_1$. However, there are elements of the set $\psi_{n,p}(v)=\psi_{n,p}(a^{ }_{n}a^{p}_1)$ which end in $a^{ }_2$, which implies $\psi_{n,p}(u)\neq
\psi_{n,p}(v)$. For the second claim, consider $u=a^{ }_1 a^{ }_n a^{p-1}_1,v=a^{ }_n a^{p}_1\in \psi_{n,p}(a^{ }_{n-1})$. Note that the substituted word $a^{p}_{1}a^{ }_2a^{ }_1$ can either come from $a^{ }_1a^{ }_n$ or $a^{ }_na^{ }_1$. If one then maps the remaining $(p-1)$ $a^{ }_1$s in $u$ and $v$, respectively, to the same image, one finds a nontrivial word in $\psi_{n,p}(u)\cap \psi_{n,p}(v)$, which proves the assertion. 
\end{proof}

\begin{remark}
If the disjoint set condition is violated at $m=1$, it also does not hold for all $m\in\mathbb{N}$, which means the upper bound in Eq.~\eqref{eq: entropy bound gen} is strict for all $m$. The proof for this part of the previous proposition relies on the existence of words which are \emph{not} recognisable, in contrast to the words we have constructed in Section~\ref{sec: level k recog}. 
 Proposition~\ref{prop: identical set cond noble Pisa} also proves \emph{inter-alia} that $X_{n,p}$ is not globally recognisable for all $n$ and $p$.  \exend
\end{remark}

Recall that the characteristic polynomial of $M_{n,p}$ is $\chi^{ }_{n,p}(x)=x^{n}-p(x+\cdots+x^{n-1})-1$. Let $\lambda_{n,p}$ be the Perron--Frobenius (PF) eigenvalue of $M_{n,p}$, which is the unique real root of $\chi_{n,p}(x)$ satisfying $\lambda>1$.
One can explicitly check that the vector 
$(\lambda_{n,p}^{n-1},\lambda_{n,p}^{n-2},\ldots,\lambda_{n,p},1)$ is an eigenvector of $M_{n,p}$ to the eigenvalue $\lambda_{n,p}$. Since $\lambda_{n,p}>0$, this must be the unique PF eigenvector (up to normalisation). Normalising, we then get 
\[
\boldsymbol{R}=\frac{1}{\sum^{n-1}_{r=0}\lambda^r_{n,p}}(\lambda_{n,p}^{n-1},\lambda_{n,p}^{n-2},\ldots,\lambda_{n,p},1)^{T}.
\]
We briefly mention that $\boldsymbol{R}_i$ is the letter frequency of $a^{ }_i$ for any element $x\in X_{n,p}$, both in the deterministic and the random setting. 
For $m=1$, one has 
\[
\boldsymbol{q}_{1}=(\log(p+1),\log(p+1),\ldots,\log(p+1),0)
\]
since every letter $a^{ }_i$ has $p+1$ possible images under $\psi_{n,p}$ except for $a^{ }_n$, which only has $a^{ }_1$. Theorem~\ref{thm: estimate top entropy} implies the following bounds for the entropy in terms of $\lambda_{n,p}$. 

\begin{proposition}\label{prop: entropy bounds random noble pisa}
Let $\psi_{n,p}$ be a random noble Pisa substitution and $X_{n,p}$ the associated subshift. Then the topological entropy $h_{\textnormal{top}}(X_{n,p})$ admits the bounds given by 
\begin{equation}\label{eq: entropy bound lambda}
\log(p+1)\frac{\lambda^{n-1}_{n,p}-1}{\lambda^{n}_{n,p}-1}<h_{\textnormal{top}}(X_{n,p})<
\log(p+1)\frac{\lambda_{n,p}}{\lambda_{n,p}-1}\frac{\lambda^{n-1}_{n,p}-1}{\lambda^{n}_{n,p}-1},
\end{equation}
for all $n$ and $p$. \qed
\end{proposition}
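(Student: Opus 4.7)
The plan is to specialise Theorem~\ref{thm: estimate top entropy} to the case $m=1$, evaluate the inner product $\boldsymbol{q}_1^{T}\boldsymbol{R}$ in closed form using the explicit PF eigenvector given in the paragraph preceding the statement, and then upgrade the resulting inequalities to strict ones by invoking Proposition~\ref{prop: identical set cond noble Pisa}.

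Writing $\lambda:=\lambda_{n,p}$ for brevity, recall that
\[
\boldsymbol{R}=\frac{1}{\sum_{r=0}^{n-1}\lambda^r}\bigl(\lambda^{n-1},\lambda^{n-2},\ldots,\lambda,1\bigr)^{T},
\qquad
\boldsymbol{q}_1=\bigl(\log(p+1),\ldots,\log(p+1),0\bigr),
\]
where the zero comes from $|\psi_{n,p}(\alpha_n)|=1$. Since $\boldsymbol{q}_1$ has zero in its last coordinate, only the first $n-1$ components of $\boldsymbol{R}$ contribute, and using the geometric sum identities
\[
1+\lambda+\cdots+\lambda^{n-1}=\frac{\lambda^{n}-1}{\lambda-1},
\qquad
\lambda+\lambda^{2}+\cdots+\lambda^{n-1}=\frac{\lambda(\lambda^{n-1}-1)}{\lambda-1},
\]
one computes
\[
\boldsymbol{q}_1^{T}\boldsymbol{R}=\log(p+1)\cdot\frac{\lambda(\lambda^{n-1}-1)}{\lambda^{n}-1}.
\]

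Plugging this into the bounds of Theorem~\ref{thm: estimate top entropy} with $m=1$ gives, for the lower bound, $\tfrac{1}{\lambda}\boldsymbol{q}_1^{T}\boldsymbol{R}=\log(p+1)\tfrac{\lambda^{n-1}-1}{\lambda^{n}-1}$, and for the upper bound, $\tfrac{1}{\lambda-1}\boldsymbol{q}_1^{T}\boldsymbol{R}=\log(p+1)\tfrac{\lambda}{\lambda-1}\tfrac{\lambda^{n-1}-1}{\lambda^{n}-1}$, which are exactly the two expressions in \eqref{eq: entropy bound lambda}.

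It remains to argue that both inequalities are strict. This is provided for free by Proposition~\ref{prop: identical set cond noble Pisa}, which shows that neither the identical set condition nor the disjoint set condition holds for $\psi_{n,p}$ at $m=1$; the strictness statement of \cite[Cor.~16]{Gohlke} (quoted just above Proposition~\ref{prop: identical set cond noble Pisa}) then directly gives the strict inequalities. I do not anticipate any real obstacle here; the only point requiring some care is to keep the normalisation of $\boldsymbol{R}$ consistent throughout and to make sure the strictness is obtained via Gohlke's corollary applied at $m=1$ rather than from any limiting behaviour, since the general form of Theorem~\ref{thm: estimate top entropy} only asserts non-strict bounds.
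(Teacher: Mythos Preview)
Your proposal is correct and is exactly the approach the paper takes: the paper simply states that Theorem~\ref{thm: estimate top entropy} (at $m=1$, with the explicit $\boldsymbol{R}$ and $\boldsymbol{q}_1$ computed just before the proposition) implies the bounds and marks the statement with a \qed. Your computation of $\boldsymbol{q}_1^{T}\boldsymbol{R}$ via geometric sums and the appeal to Proposition~\ref{prop: identical set cond noble Pisa} together with \cite[Cor.~16]{Gohlke} for strictness fill in precisely the details the paper leaves implicit.
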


Entropies of 
topologically mixing shifts of finite type (which can be recoded as random substitutions) are always logarithms of Perron (and hence algebraic) numbers. 
In \cite{GRS}, the authors asked whether it is true that all topological entropies of random substitution subshifts are logarithms of Perron numbers. 
Here, we provide a counterexample to this and provide sufficient conditions for which this holds. 

\begin{example}\label{ex: transcendence}
Let $\vartheta\colon a\mapsto {baa}, b\mapsto \left\{ab,ba\right\}$, which is a random variant of the squared Fibonacci substitution. 
Note that $\psi^{2}_{2,1}\colon a\mapsto \left\{aba,baa,aab\right\}, b\mapsto \left\{ab,ba\right\}$ and so $\vartheta$ can be obtained by removing $aba$ and $aab$ in the set of possible realisations in $\psi^{2}_{2,1}(a)$. 
 Its topological entropy is computed to be $h_{\text{top}}(X_{\vartheta})=\frac{1}{\tau^3}\log(2)$, where $\tau $ is the golden ratio. 
This was first computed by Nilsson in \cite{Nilsson} and was reconfirmed in \cite{Gohlke} via the estimates in Theorem~\ref{thm: estimate top entropy}. This random substitution satisfies the disjoint set condition, which means the lower bound for $m=1$ in Eq.~\eqref{eq: entropy bound gen} is the entropy itself. This number is a logarithm of the transcendental number $2^{\frac{1}{\tau^3}}$, where transcendence follows from the Gelfond--Schneider theorem since $2$ and $\frac{1}{\tau^3}$ are both algebraic and the latter is irrational. 
\end{example}

\begin{remark}
Using the same argument, it is easy to see that the bounds for $m=1$ in Eq.~\eqref{eq: entropy bound lambda} are all logarithms of transcendental numbers for any $n$ and $p$. However, since they do not satisfy both the disjoint set and the identical set condition, we do not resolve 
the transcendence question for this family. 
\end{remark}

We generalise the observation in Example~\ref{ex: transcendence} in the following result. 

\begin{proposition}\label{prop: transcendence}
Let $\vartheta$ be a primitive compatible random substitution over $\mathcal{A}=\left\{a^{ }_1,\ldots,a^{ }_n\right\}$ with irrational PF eigenvalue $\lambda$.
Let $\boldsymbol{R}_i$ be the letter frequency of $a_i$. 
Suppose the following conditions are 
satisfied: 
\begin{enumerate}
\item $\vartheta$ satisfies the identical set (disjoint set) condition for some $m\in\mathbb{N}$. 
\item The set $\left\{1,\lambda^{-m}\boldsymbol{R}_1,\ldots,\lambda^{-m}\boldsymbol{R}_n\right\}$ ( resp. $\left\{1,(\lambda^{m}-1)^{-1}\boldsymbol{R}_1,\ldots,(\lambda^{m}-1)^{-1}\boldsymbol{R}_n\right\}$) is linearly independent over $\mathbb{Q}$. 
\end{enumerate}
Then $h_{\text{top}}(X_{\vartheta})$ is a logarithm of a transcendental number. 
\end{proposition}

\begin{proof}
The entries of $\boldsymbol{q}_m$ are all logarithm of natural numbers. 
The entries of $\boldsymbol{R}$ are letter frequencies, which are all algebraic numbers (in fact lying in the same $\mathbb{Z}[\frac{1}{\lambda}]$-module).
Suppose $\vartheta$ satisfies the identical set condition for some $m$.
 The lower bound for $h_{\text{top}}(X_{\vartheta})$, which  in this case is the actual value of the entropy, is  of the form 
$\log(k^{\beta_1}_1\cdots k^{\beta_n}_n)$, where $k_i\in\mathbb{N}$, and $\beta_i=\frac{1}{\lambda^m}\boldsymbol{R}_i$ is algebraic for all $i$. We know that $\left\{1,\beta_1,\ldots,\beta_n\right\}$ are rationally independent from the second assumption. The transcendence of $\theta$ in $h_{\text{top}}(X_{\vartheta})=\log(\theta)$ the follows from Baker's theorem, which states that $k^{\beta_1}_1\cdots k^{\beta_n}_n$ is transcendental whenever $k_1,\ldots, k_n,\beta_1,\ldots, \beta_n$ are all algebraic and 
$\left\{1,\beta_1,\ldots,\beta_n\right\}$ is linearly independent over $\mathbb{Q}$ \cite{Baker}. The proof for when $\vartheta$ satisfies the disjoint set condition works the same way. 
\end{proof}

\begin{remark}
It is well known that
whenever $M$ has an irreducible characteristic polynomial, the entries
$\boldsymbol{R}_i$ of the right PF eigenvector are rationally independent. It is tempting to claim that irreducibility implies the second condition in Proposition~\ref{prop: transcendence}. However, this might not be true since there are cases when $\left\{\beta_1,\ldots,\beta_n\right\}$ is a rationally independent set but adding $1$ forces them to be dependent. As an example $\sum_{i=1}^{n} \boldsymbol{R}_i=1$ since $\boldsymbol{R}$ is a frequency vector, so $\left\{1,\boldsymbol{R}_1,\ldots,\boldsymbol{R}_n\right\}$ is a rationally dependent set. We still think that we can get the stronger linear independence property from irreducibility since we are dividing by $\lambda^{m}$ or $\lambda^{m}-1$, both of which are irrational, but so far we do not have a proof of this assertion. \exend
\end{remark}

\begin{question}\label{ques: transcendental}
Let $\vartheta$ be a primitive compatible random substitution whose PF eigenvalue $\lambda$ is irrational. 
Is it always true that $h_{\text{top}}(X_{\vartheta})=\log(\theta)$, where $\theta$ is a transcendental number?
\end{question}

Next, we derive explicit bounds for $\lambda_{n,p}$ solely in terms of $p$. 
Inserting $p$ and $p+1$, one gets $\chi_{n,p}(p)=p-\sum_{r=0}^{n-1}p^r<0$ and $\chi_{n,p}(p+1)=p$. This means $\chi_{n,p}(x)$ must have a real zero between $p$ and $p+1$. Since $p\geqslant 1$, and $\lambda_{n,p}$ is the unique root of modulus greater than 1, this zero must be $\lambda_{n,p}$. 
We then get that $p<\lambda_{n,p}<p+1$. Consequently, for a fixed $n$, the sequence $\left\{\lambda_{n,p}\right\}_{p\geqslant 1}$
is strictly increasing. The same estimate for a similar class of polynomials is found in \cite{GilWorley}. Using this estimate, one gets the following bounds which depend only on $n$ and $p$; see Figure~\ref{fig: entropy bounds} and Table~\ref{tab: entropy}. 

\begin{corollary}\label{coro: entropy bounds n,p}
Let $h_{\textnormal{top}}(X_{n,p})$ be as in Proposition~\textnormal{\ref{prop: entropy bounds random noble pisa}}. Then one has 
\begin{equation}\label{eq: entropy bound n p}
\log(p+1)\Big(\frac{p^{n-1}-1}{(p+1)^{n}-1}\Big)<h_{\textnormal{top}}(X_{n,p})<
\log(p+1)\Big(\frac{p+1}{p-1}\Big)\Big(\frac{(p+1)^{n-1}-1}{p^{n}-1}\Big)
\end{equation}
for $p>1$. \qed
\end{corollary}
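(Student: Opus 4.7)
The plan is to reduce the claim directly to the bounds furnished by Proposition~\ref{prop: entropy bounds random noble pisa} and then substitute the pointwise estimates $p < \lambda_{n,p} < p+1$ (already derived in the paragraph preceding the corollary from $\chi_{n,p}(p)<0$ and $\chi_{n,p}(p+1)=p>0$). The central observation is that both $\lambda \mapsto \lambda^{n-1}-1$ and $\lambda \mapsto \lambda^n-1$ are strictly increasing on $(0,\infty)$ for $n\geqslant 2$, which allows each occurrence of $\lambda_{n,p}$ in Eq.~\eqref{eq: entropy bound lambda} to be replaced by whichever of $p$ or $p+1$ weakens the inequality in the correct direction.

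For the lower bound, I would apply $\lambda_{n,p} > p$ to the numerator and $\lambda_{n,p} < p+1$ to the denominator to obtain
\[
\frac{\lambda_{n,p}^{n-1}-1}{\lambda_{n,p}^{n}-1} \;>\; \frac{p^{n-1}-1}{(p+1)^{n}-1},
\]
and then chain this with the left inequality in Eq.~\eqref{eq: entropy bound lambda}.

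For the upper bound, I would treat the two factors $\lambda_{n,p}/(\lambda_{n,p}-1)$ and $(\lambda_{n,p}^{n-1}-1)/(\lambda_{n,p}^{n}-1)$ separately. The same monotonicity argument with the roles of $p$ and $p+1$ swapped gives
\[
\frac{\lambda_{n,p}^{n-1}-1}{\lambda_{n,p}^{n}-1} \;<\; \frac{(p+1)^{n-1}-1}{p^{n}-1}.
\]
The prefactor $\lambda \mapsto \lambda/(\lambda-1) = 1 + 1/(\lambda-1)$ is strictly decreasing on $(1,\infty)$, so $\lambda_{n,p} > p$ yields $\lambda_{n,p}/(\lambda_{n,p}-1) < p/(p-1) < (p+1)/(p-1)$, the last step using $p>1$. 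Multiplying these estimates and invoking the right inequality in Eq.~\eqref{eq: entropy bound lambda} produces the claimed upper bound.

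There is no real obstacle: every step is a monotonicity argument, and strictness of all inequalities is automatic, since $\lambda_{n,p}$ lies strictly between $p$ and $p+1$. The only mild point to flag is that $p>1$ is precisely what is needed so that $p-1>0$ and the prefactor bound $p/(p-1)<(p+1)/(p-1)$ is meaningful, which matches the hypothesis of the corollary.
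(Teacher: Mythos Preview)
Your proposal is correct and follows exactly the route the paper intends: the corollary is stated with \qed because it is meant to be an immediate consequence of Proposition~\ref{prop: entropy bounds random noble pisa} together with the estimate $p<\lambda_{n,p}<p+1$ established just before it, and your monotonicity substitutions carry this out in full detail. The only cosmetic difference is that for the prefactor one can bound $\lambda_{n,p}/(\lambda_{n,p}-1)<(p+1)/(p-1)$ in one step by using $\lambda_{n,p}<p+1$ in the numerator and $\lambda_{n,p}>p$ in the denominator, rather than passing through $p/(p-1)$ first; your route is in fact slightly sharper before the final weakening.
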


\begin{remark}
In this work, we considered random substitutions as set-valued maps. One can equip each possible image of $a^{  }_i$ in Eq.~\eqref{eq: RNP definition} with a probability distribution $\nu^{ }_i\in\mathcal{M}(\psi(a^{ }_i))$, which yields a measure-theoretic dynamical system
$(X_{n,p},S,\mu)$, where the invariant measure $\mu$ depends on the probabilities; see \cite{GS}. For this system, one can compute the metric entropy $h_{\mu}(X_{n,p})$, which admits similar bounds to that given in Eq.~\eqref{eq: entropy bound gen}, where $\boldsymbol{q}_m$ now depends on $\left\{\nu^{ }_i\right\}$; see \cite{GMRS}.  \exend
\end{remark}

\begin{figure}[h!]
  \subfloat[Estimates via Eq.~\eqref{eq: entropy bound n p}]{
    \includegraphics[scale=0.32]{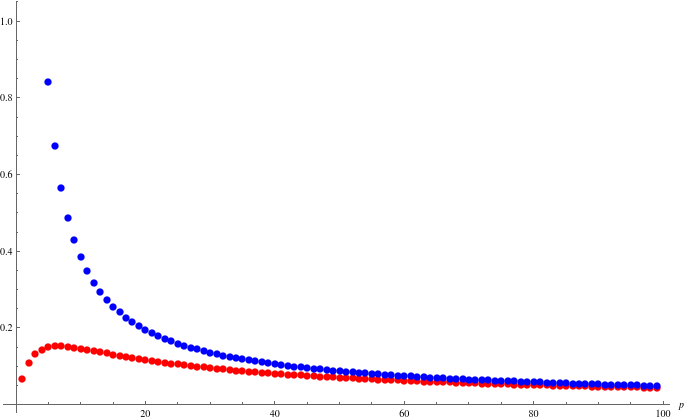}}
    \subfloat[Estimates via Eq.~\eqref{eq: entropy bound lambda}]{
    \includegraphics[scale=0.32]{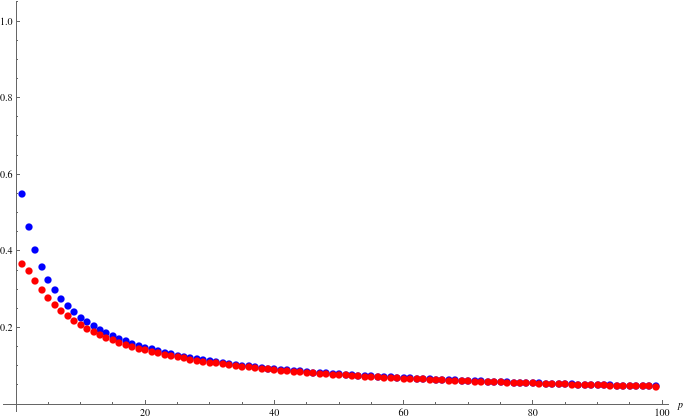}}
        \caption{Upper (blue) and lower (red) bounds for $h_{\text{top}}(X_{n,p})$ for $n=5$ and $2\leqslant p\leqslant 100$. Note that the bounds on the left look better for smaller values of $p$, but both are asymptotically equivalent as $p$ grows.  }
    \label{fig: entropy bounds}
\end{figure}

\begin{table}[h!]
    \centering
        \renewcommand{\arraystretch}{1.2}
    \begin{tabular}{|c|c c c c c c|}
    \hline 
      $p$  & $40$ & $41$ & $42$ & $43$ & $44$ & $45$  \\
      \hline
      $h^{+}_{5,p}$ & $0.107732$ & $0.105407$ & $0.103190$ & $0.101075$ & $0.099055$ &
       $0.097122$ \\ 
      $h^{-}_{5,p}$ & $0.082056$ & $0.080815$ & $0.079612$ & $0.078448$ & $0.077320$ & $0.076226$  
        \\
        \hline
    \end{tabular}
    \vspace*{2mm}
    
        \caption{Numerical values of the entropy bounds using Eq.~\eqref{eq: entropy bound n p} for $n=5$ and $40 \leqslant p\leqslant 45$. Here, $h^{+}_{n,p}$ and $h^{-}_{n,p}$ denote the upper and lower bounds, respectively.} 
    \label{tab: entropy}
\end{table}

The advantage of using Eq.~\eqref{eq: entropy bound n p} is that the bounds are given analytically in terms of $n$ and $p$, whereas one has to compute the eigenvalue $\lambda_{n,p}$ in order to use Eq.~\eqref{eq: entropy bound lambda}.

We then use the bounds we have in Proposition~\ref{prop: entropy bounds random noble pisa}
Corollary~\ref{coro: entropy bounds n,p} to demonstrate subshifts which are not topologically conjugate to a given random noble Pisa subshift.

Let $n=5$ and $p=3$. One can numerically plot the derivative of the upper bound of in Eq.~\eqref{eq: entropy bound n p} with respect to $p$, which reveals that the derivative is negative for $p>8$, thus implying that the upper bound is monotonically decreasing from $p=8$ onwards. Using Eq.~\eqref{eq: entropy bound n p}, we get  $h_{\text{top}}(X_{5,3})>0.10841:=h^{-}_{5,3}$. 
Since the  $h^{-}_{5,3}>h^{+}_{5,40}$ in Table~\ref{tab: entropy}, $X_{5,3}$ and $X_{5,40}$ necessarily have distinct entropies and cannot be topologically conjugate. In fact, if the upper bound is indeed monotonically decreasing for $p\geqslant 8$, one gets $X_{5,3}\not\simeq_{\text{top}} X_{5,p}$ for $p\geqslant 40$.

The finer estimate in Eq.~\eqref{eq: entropy bound lambda} yields a far better result, i.e., $X_{5,3}\not\simeq_{\text{top}} X_{5,p}$ already for $p\geqslant 5$. This is again under the assumption that the $\lambda_{5,p}$-dependent upper bound
is monotonically decreasing, which we do not prove here. Moll proved in \cite{Moll} that the $h_{\text{top}}(X_{2,p})$ for the random noble means family is actually strictly decreasing in $p$. The proof uses the closed form $\lambda_{2,p}=\frac{p+\sqrt{p^2+4}}{2}$, which we obviously do not have for general $n$.

\begin{remark}
It would be interesting to find an invariant which can distinguish whether $X_{n,p}$ and $X_{m,q}$ are topologically conjugate by just looking at the PF eigenvalue $\lambda_{n,p}$ and $\lambda_{m,q}$.  A result along this vein in the deterministic setting is Cobham's theorem, which says that if $\lambda$ and $\lambda^{\prime}$ are multiplicatively independent Perron numbers, a sequence $x\in\mathcal{A}^{\mathbb{N}}$ is both $\lambda$- and $\lambda^{\prime}$-substitutive if $x$ is ultimately periodic; see 
\cite{Durand}. Of course the obstacles one has to overcome in the random setting include the fact that $X_{\vartheta}$ is not linearly recurrent for a non-degenerate random substitution $\vartheta$ (although the set of linearly repetitive elements is dense in the subshift \cite[Prop.~16]{RustSpin}). \exend
\end{remark}

The following result describes the asymptotic behaviour of the sequence of entropies when one of the parameters is fixed and the other is sent to infinity. 

\begin{proposition}\label{prop: asymptotics}
Let $n$ be fixed. Then $\lim_{p\to \infty}h_{\textnormal{top}}(X_{n,p})=0$. 
On the other hand, for a fixed $p$, 
$\lim_{n\to\infty}h_{\textnormal{top}}(X_{n,p})=\frac{\log(p+1)}{(p+1)}$.
\end{proposition}

\begin{proof}
For the first claim, it suffices to show that both bounds in Eq.~\eqref{eq: entropy bound n p} converge to $0$ as $p\to\infty$, which we leave to the reader as an exercise; compare \cite[Sec.~3.2]{Moll}.
For the second claim, the upper and lower bounds in the coarser estimate involving only $n$ and $p$ converge to 
$0$ and $\infty$, respectively, which does not reveal anything about the asymptotics. 
This means one has to use the $\lambda_{n,p}$-dependent estimate in Eq.~\eqref{eq: entropy bound lambda}.
The crucial property here is $\lim_{p\to\infty}\lambda_{n,p}=p+1$, which we prove below. Let $\chi^{ }_{n,p}(x)$ be the characteristic polynomial above. For $0<\delta<1$, \[\chi^{ }_{n,p}(p+1-\delta)=\frac{p^2-\delta\big((p+1-\delta)^n-(p+1)\big)}{p-\delta},\]
which means one can choose an $N\in\mathbb{N}$ such that $\chi_{n,p}(p+1-\delta)<0$ for all $n\geqslant N$. Since $\chi_{n,p}(p+1)=p$ for all $n$, this implies that $\lambda_{n,p}\in (p+1-\delta,p+1)$ for all $n\geqslant N$, which implies the claim since $\delta$ is arbitrary. The claim for $\lim_{n\to\infty} h_{\textnormal{top}}(X_{n,p})$ then follows from standard arguments. 
\end{proof}

\begin{corollary}
For every $n\geqslant 2$ and $\delta>0$, there is a $p\in \mathbb{N}$ such that the topological entropy $h_{\textnormal{top}}(X_{n,p})$ of $X_{n,p}$ is less than $\delta$. \qed 
\end{corollary}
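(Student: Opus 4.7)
The plan is to derive this directly from the upper bound in Corollary~\ref{coro: entropy bounds n,p}, whose explicit analytic form already does all the work for us. Concretely, for fixed $n \geqslant 2$ and any $p > 1$, one has the estimate
\[
h_{\textnormal{top}}(X_{n,p}) \;<\; U(n,p) \;:=\; \log(p+1)\cdot\frac{p+1}{p-1}\cdot\frac{(p+1)^{n-1}-1}{p^{n}-1}.
\]
The strategy is simply to show $\lim_{p\to\infty} U(n,p) = 0$ for every fixed $n\geqslant 2$; then given $\delta>0$, one picks $p$ large enough that $U(n,p)<\delta$ and the conclusion follows.

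To see that the limit vanishes, I would split $U(n,p)$ into three factors and estimate each one asymptotically as $p\to\infty$ with $n$ fixed. The middle factor $\frac{p+1}{p-1}$ converges to $1$. The third factor satisfies
\[
\frac{(p+1)^{n-1}-1}{p^{n}-1} \;\leqslant\; \frac{(p+1)^{n-1}}{p^{n}-1} \;=\; \frac{(1+1/p)^{n-1}}{p - p^{1-n}},
\]
which, for fixed $n$, is $O(1/p)$ as $p\to\infty$. Combining with the first factor $\log(p+1)$, one obtains $U(n,p) = O(p^{-1}\log p)$, which indeed tends to zero.

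Thus, given $\delta>0$, there exists $p_0 = p_0(n,\delta) \in \mathbb{N}$ with $U(n,p)<\delta$ for all $p\geqslant p_0$, and hence $h_{\textnormal{top}}(X_{n,p})<\delta$ for any such $p$. There is essentially no obstacle here; the only minor care required is to make sure the upper bound in Eq.~\eqref{eq: entropy bound n p} is applied within its stated range $p>1$, which is harmless since we are free to take $p$ as large as desired. The statement for $n=2$ is already effectively contained in the asymptotics recorded in \cite[Sec.~3.2]{Moll}, and our argument extends this behaviour uniformly across all $n \geqslant 2$.
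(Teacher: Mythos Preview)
Your proof is correct and follows precisely the approach the paper indicates: the text preceding the corollary remarks that both bounds in Eq.~\eqref{eq: entropy bound n p} tend to $0$ as $p\to\infty$ for fixed $n$, leaving the asymptotic check as an ``easy exercise,'' and the corollary is then declared immediate. You have simply carried out that exercise explicitly by factoring $U(n,p)$ and verifying the $O(p^{-1}\log p)$ decay, which is exactly what the paper had in mind.
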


\begin{remark}
The previous result is reminiscent of a density-result for entropies of random substitutions in $\mathbb{R}_{\geqslant 0}$ in \cite[Thm.~6.8]{GRS}, where it was shown that there is a random substitution $\vartheta$ with entropy $(\frac{\ell}{m}+n)\log(2)$, for all $n,m,\ell\in\mathbb{N}$; see also \cite{KKL} for a construction of shift spaces with arbitrary entropy $t\in[0,1]$. \exend
\end{remark}

As mentioned above, for a fixed $p$, one can view the family $\left\{\psi^{ }_{n,p}\right\}_{n\geqslant 2}$ of random noble Pisa substitutions as finite approximants of the random $p$-infinibonacci substitution
\[
\psi_{\infty,p}\colon a^{ }_i\to \left\{a_1^{j}a^{ }_{i+1}a^{p-j}_{1}\mid 0\leqslant j\leqslant p\right\},\,a^{ }_{\infty}\to \left\{a_1^{j}a^{ }_{\infty}a^{p-j}_{1}\right\},
\] 
which can be thought of as a random substitution on a (compact) infinite alphabet (in this case given by the one-point compactification of $\mathbb{N}^{\ast}=\mathbb{N}\cup \left\{\infty\right\}$. This random substitution is constant length 
and primitive in some sense (note that the substitution matrix in this case is infinite-dimensional, and so the usual notion via powers of $M$ no longer works). To this random substitution, one can associate a subshift $X_{\infty,p}$ via the corresponding language, where one needs to include limits of sequences of $\psi^{ }_{\infty,p}$-legal words in the language. The asymptotics we get from Proposition~\ref{prop: asymptotics}
leads to the following question.

\begin{question}
Is the topological entropy of the random $p$-infinibonacci subshift equal to $\frac{\log(p+1)}{(p+1)}$? 
\end{question} 

\section{Conclusion}

In this work, we have shown that recognisable words exist for the noble Pisa family at all levels whenever $p>1$. 
It is not clear whether recognisable words exist outside the ones arising from the $\Gamma$-construction, and if they do, whether they also have a hierarchical structure, i.e., shorter recognisable words give rise to longer ones via appropriate realisations under the substitution.
 One can also ask whether the techniques used in this paper can be generalised to prove the same result for other families of substitutions. 
Some important features of the random substitutions which allow one to use the $\Gamma$-construction to build recognisable words are:
\begin{enumerate}
    \item The existence of a length hierarchy, i.e., a letter whose level-$n$ realisations are never shorter than the images of other letters. 
    \item The reflection invariance of the level-$n$ images $\vartheta^{n}(a)$.
\end{enumerate}
If another random substitution has these properties, we suspect that, under additional assumptions, one can extend the method used here to construct recognisable words. As mentioned above, we suspect that our results can be extended to some $\beta$-substitutions whenever $\beta$ is a simple Parry number. It would be interesting to see what happens when $\beta$ is no longer simple. In particular, this would imply that the random $\beta$-substitution associated to it is no longer irreducible (although we never used irreducibility in the proofs here). 

Another natural question is identifying how many length-$n$ words or level-$n$ inflation words are actually level-$m$ recognisable for a given random substitution $\vartheta$. If for some $N\in\mathbb{N}$, all legal words are level-$N$ recognisable, this implies that the subshift itself is recognisable; compare \cite{Miro}. It would also be of interest whether one can prove a quantitative relation between a notion of an ``asymptotic percentage" of recognisable inflation words, and the deviation of the actual value of the topological entropy $h_{\textnormal{top}}(X_{\vartheta})$ from the bounds in Theorem~\ref{thm: estimate top entropy}.

\section{Acknowledgements}
The authors would like to thank Michael Baake for suggestions and comments on the manuscript, and  Philipp Gohlke and Dan Rust for fruitful discussions.
GBE would like to acknowledge the support of Office of the Dean of the School of Science and Engineering of the Ateneo de Manila University, and the Department of Science and Technology of the Philippines through its Accelerated Science and Technology Human Resource Development Program (DOST-ASTHRDP).
EDM would like to acknowledge the support of the Alexander von Humboldt Foundation, Ateneo de Manila University and Bielefeld University, where a part of this work was completed. 
NM would like to acknowledge the support of the German Research Foundation (DFG) within the CRC1283.

\end{document}